\documentclass{article}
\usepackage{amsfonts}
\usepackage{amsthm}			
\usepackage{amssymb}
\usepackage{amsmath}
\usepackage{hyperref}
\usepackage{tikz}
\usetikzlibrary{shapes.geometric,arrows}
\usetikzlibrary{calc,patterns}
\usepackage{ifthen}
\theoremstyle{plain}
\usepackage{bbm}
\usepackage{float}
\usepackage{authblk}
\usepackage{mathdots}
\usepackage{mathtools}
\usepackage{euscript}
\usepackage{dsfont}
\usepackage{rotating}
\usepackage{dsfont}
\usepackage{fullpage}
\theoremstyle{definition}
\newtheorem{thm}{Theorem}[section]
\newtheorem{lemma}[thm]{Lemma}
\newtheorem{proposition}[thm]{Proposition}
\newtheorem{cor}[thm]{Corollary}

\newtheorem{definition}[thm]{Definition}

\newtheorem{exmp}[thm]{Example}

\theoremstyle{remark}
\newtheorem*{rem}{Remark}


\newcommand{\rt}[1]{\Phi^{+}(#1)}

\newcommand{\tpp}[2]{\node at (#1*1+.25,#2*1-.25) {1};\node at (#1*1+.75,#2*1-.75) {-1};}
\newcommand{\tpm}[2]{
\node at (#1*1+.25,#2*1-.25) {-1};
\node at (#1*1+.75,#2*1-.75) {1};}
\newcommand{\tp}[2]{\node at (#1*1+.25,#2*1-.25) {1};}
\newcommand{\tm}[2]{\node at (#1*1+.75,#2*1-.75) {1};}
\newcommand{\precdot}
{\prec\mathrel{\mkern-5mu}\mathrel{\cdot}}
\DeclareMathOperator\rk{rk}

\newcommand{\Tout}[1]{\mathcal{T}_{#1}^-}

\title{Dimensions of toggleability spaces}

\begin{document}

\author[*]{Ben Adenbaum}

\author[**]{Spencer Daugherty}

\author[$\dagger$]{Nicholas Mayers}

\affil[*]{Department of Mathematics and Statistics, Villanova University, Villanova, PA, 18095}

\affil[**]{Department of Mathematics, University of Colorado Boulder, Boulder, CO, 80309}

\affil[$\dagger$]{Department of Mathematics, Kennesaw State University, Kennesaw, GA, 30144}

\maketitle

\begin{abstract}

   We establish a conjecture of Defant, Hopkins, Poznanovi\'{c}, and Propp concerning the dimensions of toggleability spaces for products of chains, shifted staircases, type-A root posets, and type-B posets. Generalizing this result, we show that for a larger family of posets defined by restricted diagrams, the dimensions of toggleability spaces are equal to the rank of the poset plus one. As part of our approach, we build upon the technique of rook statistics introduced by Chan, Haddadan, Hopkins, and Moci.
\end{abstract}
\noindent {\bf Keywords:} rowmotion, homomesy, order ideal, antichain, toggleability statistics.
\section{Introduction}\label{sec:intro}

Rowmotion, one of the most studied actions in dynamical algebraic combinatorics, is an action on the distributive lattice of order ideals of a finite poset which sends an order ideal $I$ of a poset $\mathcal{P}$ to the order ideal generated by the minimal elements of $\mathcal{P} \setminus I$. A large focus of research on rowmotion is concerned with identifying statistics that exhibit the property of homomesy under this action. A statistic on the order ideals of a poset is homomesic under rowmotion if its average value across the orbits is constant. Two popular statistics to study that are often homomesic are the order ideal and antichain cardinality statistics, which give the size of the input order ideal or its maximal antichain, respectively. 

Various authors have approached the study of homomesic statistics under rowmotion using toggleability statistics \cite{defant2021homomesy, ElderToggICS, mertin2024toggleability, striker2015toggle, striker2018rowmotion}. For a poset $\mathcal{P}$, $p\in\mathcal{P}$, and an order ideal $I$ of $\mathcal{P}$, the toggleability statistic $\mathcal{T}_p$ captures whether an element $p$ is maximal in $I$, minimal in $\mathcal{P} \setminus I$, or neither. The order ideal indicator function $\mathbbm{1}_p$ indicates whether an element $p$ is contained in $I$, while the antichain indicator function $\mathcal{T}^-_p$ does the same for the antichain associated with $I$. 

In \cite{defant2021homomesy}, the authors consider the intersections of the span of toggleability statistics of a poset $\mathcal{P}$ along with a constant with $\mathrm{Span}_{\mathbb{R}}(\mathbbm{1}_p~|~p\in\mathcal{P})$ and $\mathrm{Span}_{\mathbb{R}}(\mathcal{T}^-_p~|~p\in\mathcal{P})$, denoted $I_T(\mathcal{P})$ and $A_T(\mathcal{P})$, respectively. Every function in the spaces $I_T(\mathcal{P})$ and $A_T(\mathcal{P})$ is homomesic due to properties of toggleability statistics \cite{striker2015toggle}. Restricting attention to products of two chains $[m]\times [n]$, shifted staircases $([n]\times [n])\backslash \mathfrak{S}_2$, the type $A$ root posets $\rt{A_n}$, and the type $B$ posets $\rt{B_n}$, the authors of \cite{defant2021homomesy} conjectured values for both $\dim I_T(\mathcal{P})$ and $\dim A_T(\mathcal{P})$. Here, we provide a proof of their conjecture. Specifically, letting $\mathrm{rk}(\mathcal{P})$ denote the rank of a poset $\mathcal{P}$, we prove the following.

\setcounter{section}{2} 
\setcounter{thm}{2}
\begin{thm}~
    \begin{enumerate}
    \item[$(a)$] If $\mathcal{P}=[m]\times [n]$ for $m,n\ge 2$, then $\dim I_T(\mathcal{P})=\dim A_T(\mathcal{P})=\mathrm{rk}(\mathcal{P})+1=n+m-1$.
    \item[$(b)$] If $\mathcal{P}=([n]\times[n])\backslash \mathfrak{S}_2$ for $n\ge 2$, then $\dim I_T(\mathcal{P})=\dim A_T(\mathcal{P})=\mathrm{rk}(\mathcal{P})+1=2n-1$.
    \item[$(c)$] If $\mathcal{P}=\Phi^+(A_n)$ for $n\ge 2$, then $\dim I_T(\mathcal{P})=\dim A_T(\mathcal{P})=\mathrm{rk}(\mathcal{P})+1=n$.
    \item[$(d)$] If $\mathcal{P}=\Phi^+(B_n)$ for $n\ge 2$, then $\dim I_T(\mathcal{P})=\dim A_T(\mathcal{P})=\mathrm{rk}(\mathcal{P})+1=2n-1$.
\end{enumerate}
\end{thm}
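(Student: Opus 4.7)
The plan is to prove, for each of the four families, matching upper and lower bounds of $\mathrm{rk}(\mathcal{P})+1$ on $\dim I_T(\mathcal{P})$ and $\dim A_T(\mathcal{P})$. The two directions have very different flavors: the lower bound is a concrete construction, while the upper bound requires a way to cap the number of relations among the $\mathcal{T}_p$'s that can land in the desired subspace.

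For the lower bound, I would produce $\mathrm{rk}(\mathcal{P})+1$ elements of $I_T(\mathcal{P})$ (and of $A_T(\mathcal{P})$) from the rank-graded sums $R_k := \sum_{\mathrm{rk}(p)=k} \mathcal{T}_p$. For the four families in question the covering structure is symmetric enough that $R_k$, or a short combination of consecutive $R_k$'s, can be rewritten in the $\mathbbm{1}_p$ basis with bounded edge corrections, placing it in $I_T(\mathcal{P})$; dually for $A_T(\mathcal{P})$ via the $\mathcal{T}^-_p$ basis. Together with the constant function $1$ this gives $\mathrm{rk}(\mathcal{P})+1$ candidates in each space, and linear independence can be checked by evaluating on the filtration of order ideals obtained by adding elements along a fixed maximal chain of $\mathcal{P}$, yielding a triangular evaluation matrix.

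For the upper bound I would extend the rook-statistic machinery of Chan, Haddadan, Hopkins, and Moci. Each of the four posets can be drawn as a diagram of cells (a rectangle for $[m]\times[n]$, a staircase for $([n]\times[n])\backslash \mathfrak{S}_2$, a right triangle for $\Phi^+(A_n)$, and a folded/trapezoidal diagram for $\Phi^+(B_n)$), under which $\mathcal{T}_p$ becomes a local rook-placement indicator at $p$. This converts any linear dependence among the $\mathcal{T}_p$'s into a relation on rook placements, which one then tests against a carefully chosen collection of order ideals (for example, principal order ideals and their single-element perturbations). The output is that any element of $\mathrm{Span}(\mathcal{T}_p,1)\cap \mathrm{Span}(\mathbbm{1}_p)$ must be constant on cells sharing the same ``row'' or ``diagonal'' of the diagram, of which there are precisely $\mathrm{rk}(\mathcal{P})+1$.

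The main obstacle will be the type-$B$ case: $\Phi^+(B_n)$ is not the cell set of an ordinary Young diagram, and the doubled/folded shape reflecting the long roots interacts nontrivially with both the rook model and the rank grading. Pinning down the correct rook statistics there, and verifying that they yield exactly $2n-1$ independent row/diagonal statistics, is where most of the effort will go. A smaller but genuine subtlety is the equality $\dim I_T(\mathcal{P})=\dim A_T(\mathcal{P})$: a priori these are distinct intersections in a common ambient space, so I would aim to derive the equality from a poset-theoretic duality (swapping maximal elements of $I$ with minimal elements of $\mathcal{P}\setminus I$) that intertwines the two constructions rather than by computing them separately.
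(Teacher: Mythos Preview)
Your lower-bound plan is on target and essentially matches the paper: for these posets the rank slices coincide with the diagonals $i-j=\text{const}$, and the paper shows that $\sum_{i-j=k}\mathcal{T}_{(i,j)}$ (plus a constant when the diagonal meets $\mathrm{Min}(\mathcal{P})$) lies in $I_T(\mathcal{P})$, yielding $\mathrm{rk}(\mathcal{P})+1$ independent elements. The upper bound, however, has a genuine gap. ``Constant on diagonals'' is not by itself the right count. For $\Phi^+(A_n)$ there are $2n-1$ diagonals, not $n$, so diagonal constancy alone only gives $\dim I_T\le 2n-1$; the paper closes this with a separate argument (evaluating $f$ on $I(q_1,q_2)$, $I(q_1)$, $I(q_2)$, $\emptyset$ for adjacent minimal elements $q_1,q_2$) forcing $c_p=0$ on the cells just above the anti-diagonal, which together with diagonal constancy kills $n-1$ of the $2n-1$ parameters. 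The same extra step is needed for $\Phi^+(B_n)$. In the opposite direction, for $([n]\times[n])\backslash\mathfrak{S}_2$ the diamond relation $c_{(i,j)}=c_{(i+1,j+1)}$ is only available when $(i+1,j)\in\mathcal{P}$, i.e.\ strictly above the main diagonal, so the $n$ coefficients $c_{(i,i)}$ remain free and the correct upper bound is $(n-1)+n=2n-1$, not the $n$ that naive diagonal constancy would suggest. Your outline does not isolate these case-specific corrections, and it misattributes the mechanism: the paper's upper bound on $I_T$ comes from the local inclusion--exclusion $f(I(p_2,p_3))-f(I(p_2))-f(I(p_3))+f(I(p_1))=0$ on $2\times 2$ squares, not from rook statistics.

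The equality $\dim I_T=\dim A_T$ also does not come from the duality you propose. Swapping $\mathrm{Max}(I)$ with $\mathrm{Min}(\mathcal{P}\setminus I)$ interchanges $\mathcal{T}_p^-$ and $\mathcal{T}_p^+$, not $\mathcal{T}_p^-$ and $\mathbbm{1}_p$, so it does not carry one intersection to the other. This is where the paper actually deploys the rook statistics: one proves an identity $\mathbbm{1}_{(i,j)}\equiv\sum_{(i',j')\succeq(i,j)} c_{(i',j'),(i,j)}\,\mathcal{T}_{(i',j')}^{-}$ modulo $\mathrm{Span}(\mathcal{T}_p : p\in\mathcal{P})$, with $c_{(i,j),(i,j)}\neq 0$, valid for any simply connected diagram with no southeast outward corners. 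That gives an upper-triangular change of basis between $\mathrm{Span}(\mathbbm{1}_p)$ and $\mathrm{Span}(\mathcal{T}_p^-)$ modulo toggles, from which $\dim I_T(\mathcal{P})=\dim A_T(\mathcal{P})$ follows for all four families at once.
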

\setcounter{section}{1}
\setcounter{thm}{0}

\noindent
Moreover, we provide bases for each of the toggleability spaces $I_T(\mathcal{P})$ considered and, in proving Theorem~\ref{thm:main}, establish a number of results concerning $\dim I_T(\mathcal{P})$ and $\dim A_T(\mathcal{P})$ for more general posets $\mathcal{P}$. Through such results, we are led to two noteworthy extensions of Theorem~\ref{thm:main}. First, given an integer partition $\lambda=[\lambda_1,\hdots,\lambda_\ell]$, in Section~\ref{sec:ip} we define a poset $$\mathcal{P}(\lambda)=\bigcup_{i=1}^\ell\{(i,j)~|~1\le j\le \lambda_i\}$$ whose elements are ordered lexicographically. For such a poset $\mathcal{P}=\mathcal{P}(\lambda)$, we show that both $\dim I_T(\mathcal{P})$ and $\dim A_T(\mathcal{P})$ have a particularly pleasing form in terms of the border strip of $\lambda$.

\setcounter{section}{3} 
\setcounter{thm}{15}
\begin{thm}
    For an integer partition $\lambda$, letting $N$ denote the number of cells and $C$ the number of corner cells in the border strip of $\lambda$, we have that $\dim I_T(\mathcal{P})=\dim A_T(\mathcal{P})=N-C$.
\end{thm}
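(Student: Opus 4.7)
\emph{Proof plan.} The plan is to specialize the general machinery developed earlier in Section~3 (in particular the rook-statistic approach adapted from Chan--Haddadan--Hopkins--Moci) to the family $\mathcal{P}=\mathcal{P}(\lambda)$, and read the dimension directly off the combinatorics of the border strip. First, I would associate to each cell $c$ of the border strip of $\lambda$ an explicit element $f_c \in I_T(\mathcal{P})$, built from a rook placement anchored at $c$. The first key claim is that $\{f_c\}_{c \in \text{border}}$ already spans $I_T(\mathcal{P})$: any element of $I_T(\mathcal{P})$, expressed via toggleability statistics on one side and $\mathbbm{1}_p$'s plus a constant on the other, can be rewritten so that cells strictly interior to $\lambda$ contribute trivially, leaving only border-cell contributions. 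This yields the upper bound $\dim I_T(\mathcal{P}) \le N$.

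Next, I would exhibit $C$ independent linear dependencies among the $f_c$, one per corner cell. At each corner $c$, the ``turn'' of the border strip produces an explicit relation expressing $f_c$ in terms of the $f_{c'}$ at the two neighboring border cells. The independence of these $C$ relations should be verified by linearizing the border strip as a path from one endpoint to the other and observing that each corner relation is supported on a distinguished portion of the path, so the coefficient matrix is triangular in the induced ordering. Combined with the spanning statement, this gives $\dim I_T(\mathcal{P}) \le N - C$.

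For the matching lower bound, I would produce $N-C$ linearly independent elements of $I_T(\mathcal{P})$ directly, most cleanly by induction on $|\lambda|$. Removing an outer corner of $\lambda$ yields a subpartition $\lambda'$ with $\mathcal{P}(\lambda')$ sitting inside $\mathcal{P}(\lambda)$ as a down-set, so independent generators of $I_T(\mathcal{P}(\lambda'))$ extend to $I_T(\mathcal{P}(\lambda))$, and the single new border cell together with the recorded change in corner count contributes the correct increment of $1$. Finally, the equality $\dim A_T(\mathcal{P}) = \dim I_T(\mathcal{P})$ should follow from the symmetry results for the posets $\mathcal{P}(\lambda)$ established earlier in the section: reflecting $\lambda$ across its main antidiagonal exchanges the roles of $I_T$ and $A_T$ while preserving both $N$ and $C$.

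The principal obstacle is the rank computation at the corner-dependency step. Showing that exactly $C$ independent relations arise---neither more nor fewer---requires careful control of how the corner relations overlap along the border path. This is where the defining structure of a border strip (no $2\times 2$ subsquare) becomes indispensable, ensuring the linearization of the border into a single path is unambiguous and that the corner relations decouple cleanly enough to extract the sharp rank.
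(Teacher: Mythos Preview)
Your final step---deducing $\dim A_T(\mathcal{P}) = \dim I_T(\mathcal{P})$ by reflection---does not work. Reflecting the Young diagram across the anti-diagonal produces the conjugate partition $\lambda^t$, and $(i,j)\mapsto(j,i)$ is an \emph{order isomorphism} $\mathcal{P}(\lambda)\to\mathcal{P}(\lambda^t)$. An isomorphism preserves both $I_T$ and $A_T$; it does not interchange them. Even passing to the dual poset and complementing ideals only sends $\mathcal{T}_p^-$ to $\mathcal{T}_{p}^+$ and $\mathbbm{1}_p$ to $1-\mathbbm{1}_p$, so neither span is carried to the other. The paper obtains $\dim A_T=\dim I_T$ by a genuinely different argument: the rook identity of Lemma~\ref{lem:Rij1} (more precisely its ``southeast half'') yields, for each $(i,j)$, a relation $\mathbbm{1}_{(i,j)} \equiv \sum_{(i',j')\succeq(i,j)} c_{(i',j'),(i,j)}\,\mathcal{T}_{(i',j')}^-$ with $c_{(i,j),(i,j)}\neq 0$ (Corollary~\ref{cor:half_rooks}), and the resulting upper-triangular change of basis (Lemma~\ref{lem:uppers}) forces the two dimensions to agree. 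Ferrers diagrams satisfy the hypothesis of that corollary because they have no outward corners to the southeast of any cell.

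Your treatment of the $I_T$ half also diverges from the paper and is looser than necessary. The paper does not use rook statistics for $\dim I_T(\mathcal{P}(\lambda))$ at all: the upper bound comes from Theorem~\ref{thm:diamond} (the coefficients $c_{i,j}$ are constant along diagonals, giving $\le N$ via Lemma~\ref{lem:partitionhelp}) together with Lemma~\ref{lem:root_zero} (each corner cell forces its coefficient to \emph{vanish}, giving $\le N-C$); the lower bound comes from the explicit diagonal-sum elements of Proposition~\ref{prop:pchain2} and Lemma~\ref{lem:ucovs}, one for each non-corner border cell, which are visibly independent since the $\mathcal{T}_p$ are. Your ``interior cells contribute trivially'' is essentially Theorem~\ref{thm:diamond}, not a rook-placement fact, and your ``corner dependency'' is really the vanishing statement of Lemma~\ref{lem:root_zero} rather than a linear relation among spanning vectors. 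Finally, the inductive lower bound has its own gap: an element of $I_T(\mathcal{P}(\lambda'))$ is a function on $\mathcal{J}(\mathcal{P}(\lambda'))$, and there is no automatic reason the same linear combination $c+\sum c_p\mathcal{T}_p$, now evaluated on the larger $\mathcal{J}(\mathcal{P}(\lambda))$, remains in $\mathrm{Span}(\mathbbm{1}_p)$.
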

\setcounter{section}{1}
\setcounter{thm}{0}

\noindent
For the second extension, we consider distributive lattices of order ideals of finite, width-two posets. 

\setcounter{section}{4} 
\setcounter{thm}{1}
\begin{thm}
    Let $\mathcal{P}$ be the distributive lattice of order ideals of a finite, width-two poset. Then \[\dim A_T(\mathcal{P})=\dim I_T(\mathcal{P})=\rk(\mathcal{P})+1.\]
\end{thm}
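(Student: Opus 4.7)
My plan is to reduce to the partition-shape setting handled by Theorem~3.16, via Dilworth's theorem. By Dilworth's theorem, the width-two poset $Q$ decomposes as a disjoint union $Q = C_1 \cup C_2$ of two chains, and each order ideal $I$ of $Q$ is recorded by the pair $(|I\cap C_1|,|I\cap C_2|)$. Thus the distributive lattice $\mathcal{P}$ of order ideals of $Q$ embeds as a planar ``staircase'' sub-poset of the product $[|C_1|+1]\times[|C_2|+1]$, with the staircase shape cut out by the cross-relations between $C_1$ and $C_2$ in $Q$.

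The first step is to identify this staircase sub-poset (possibly up to dualization or a small combinatorial modification) with a poset of the form $\mathcal{P}(\lambda)$ from Section~\ref{sec:ip}, reading off the partition $\lambda$ directly from the boundary of the staircase. Granting this identification, Theorem~3.16 immediately yields $\dim I_T(\mathcal{P}) = \dim A_T(\mathcal{P}) = N - C$, where $N$ and $C$ are the cell and corner counts in the border strip of $\lambda$.

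The second step is a combinatorial verification that $N - C = \rk(\mathcal{P})+1 = |Q|+1$. I expect this to follow from the Dilworth picture: each element of $Q$ corresponds to one unit step along the outer boundary of the staircase, and hence to a cell of the border strip of $\lambda$, while each corner of the border strip records a site where horizontal and vertical cover directions meet, absorbing a unit of double-counting. The main obstacle will be making the identification in the first step rigorous across all width-two posets --- configurations with many cross-relations between $C_1$ and $C_2$ can produce degenerate staircase shapes, and in such cases it may be necessary to adapt (rather than directly invoke) the rook-statistics proof underlying Theorem~3.16. In a fall-back scenario, I would instead build $\rk(\mathcal{P})+1$ explicit independent elements of $I_T(\mathcal{P})$ and $A_T(\mathcal{P})$ from rook-statistic data supported on the staircase, and then obtain the matching upper bound by extracting one linear relation from each bounded face of the planar Hasse diagram of $\mathcal{P}$, reconciling the counts via Euler's formula.
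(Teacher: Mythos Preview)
Your primary reduction has a genuine gap: the class of distributive lattices of width-two posets does \emph{not} sit inside the class of partition posets $\mathcal{P}(\lambda)$, so Theorem~3.16 cannot be invoked. A distributive lattice $\mathcal{P}=J(Q)$ always has a unique maximum (the full ideal $Q$) as well as a unique minimum; in diagram language these are exactly the simply connected diagrams with no outward corners. By contrast, $\mathcal{P}(\lambda)$ has a unique minimum $(1,1)$ but one maximal element for each outer corner of $\lambda$, so $\mathcal{P}(\lambda)$ is a distributive lattice of a width-two poset only when $\lambda$ is a rectangle. Dualizing merely trades a unique minimum for a unique maximum, and no ``small modification'' will turn a general no-outward-corner shape (e.g.\ the right-hand diagram in Figure~\ref{fig:cell_diagram_with_outward_corner}) into a Young diagram. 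Note too that Theorem~3.16 itself only gives $\dim I_T$; the equality with $\dim A_T$ for partition posets is established separately in Section~\ref{sec:antichain}. Finally, the paper explicitly remarks after Theorem~3.16 that $\dim I_T(\mathcal{P}(\lambda))$ need not equal $\rk(\mathcal{P}(\lambda))+1$, so your proposed combinatorial identity $N-C=\rk(\mathcal{P})+1$ would in any case be special to the rectangular overlap.

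Your fallback sketch is much closer to what the paper actually does, but as stated it is not a proof. The paper obtains the upper bound $\dim I_T(\mathcal{P})\le\rk(\mathcal{P})+1$ directly from the diamond relation (Theorem~3.4), applied to every interior cell of the no-outward-corner diagram; this is sharper and more direct than an Euler-formula count. For the lower bound and for $\dim A_T=\dim I_T$, the paper introduces rook statistics $R^D_{(i,j)}$, proves they are identically $1$ (Lemma~4.5), deduces $\dim A_T=\dim I_T$ via an upper-triangular change of basis (Lemma~4.3 and Corollary~4.6), and then shows that the reduced rooks along the southeasternmost maximal chain are linearly independent by an inductive mesy-count argument (Theorem~4.7). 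If you want to salvage your approach, you should abandon the reduction to $\mathcal{P}(\lambda)$ and work directly with the no-outward-corner diagram.
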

\setcounter{section}{1}
\setcounter{thm}{0}

\noindent
Our approach to Theorem~\ref{thm:dim} builds upon the technique of rook statistics introduced by Chan, Haddadan, Hopkins, and Moci in~\cite{chan2017expected}.

The remainder of the paper is organized as follows. In Section~\ref{sec:background}, we cover the requisite background from the theory of posets. Following this, the proof of Theorem~\ref{thm:main} is split between Sections~\ref{sec:conj1} and~\ref{sec:antichain}. In Section~\ref{sec:conj1} we focus on the space $I_T(\mathcal{P})$, while in Section~\ref{sec:antichain} we turn our focus to $A_T(\mathcal{P})$. 

\subsection*{Acknowledgments}
The authors are very grateful to Sam Hopkins for  helpful discussions. We would also like to thank the organizers of the 2024 AMS Math Research Community in Algebraic Combinatorics, where this collaboration began.

\section{Background}\label{sec:background}

In this section, we cover the required background from the theory of posets. For more details, see~\cite[Chapter 3]{stanley2012enumerative}. Recall that a \textbf{finite poset} $(\mathcal{P},\preceq_{\mathcal{P}})$ consists of a finite set $\mathcal{P}$ along with a binary relation $\preceq$ between the elements of $\mathcal{P}$ which is reflexive, antisymmetric, and transitive. When no confusion will arise, we denote $(\mathcal{P},\preceq_{\mathcal{P}})$ simply by $\mathcal{P}$, and $\preceq_{\mathcal{P}}$ by $\preceq$. Ongoing, we let $\le$ denote the relation corresponding to the natural ordering of $\mathbb{Z}$. Two posets $\mathcal{P}$ and $\mathcal{Q}$ are called \textbf{isomorphic} if there exists an order-preserving bijection $\phi:\mathcal{P}\to\mathcal{Q}$ whose inverse is also order preserving.

For a poset $\mathcal{P}$ and $x,y\in\mathcal{P}$, if $x\preceq y$ and $x\neq y$, then we write $x\prec y$. In the case where $x\prec y$ and there exists no $z\in\mathcal{P}$ satisfying $x\prec z\prec y$, we refer to $x\prec y$ as a \textbf{cover relation} and write $x\precdot y$. We refer to an element $x\in\mathcal{P}$ as \textbf{minimal} (resp., \textbf{maximal}) if there exists no $y\in\mathcal{P}$ satisfying $y\prec x$ (resp., $x\prec y$). All of the posets of interest in this paper can be defined, or visually represented, using ``diagrams" as in \cite{defant2021homomesy}. Here, a \textbf{diagram} is an array of finitely many cells in $\mathbb{N}\times\mathbb{N}$. For an example, see Figure~\ref{fig:poset} (a). Given a diagram $D$, one can associate a poset $\mathcal{P}(D)$ whose elements are the cells of $D$ and where for $p_1,p_2\in\mathcal{P}$, we have $p_1\preceq p_2$ provided that the box associated with $p_2$ lies weakly southeast of the box associated with $p_1$. For a poset $\mathcal{P}$, if there exists a diagram $D$ for which $\mathcal{P}$ is isomorphic to $\mathcal{P}(D)$, then we say that $\mathcal{P}$ is \textbf{defined} by the diagram $D$. An example of a poset $\mathcal{P}$ defined by a diagram is provided in Example~\ref{ex:poset}. When indexing cells in diagrams, we use matrix coordinates. We refer to a diagram as
\begin{itemize}
    \item \textbf{connected} if there exists no pairs of cells which cannot be connected by a path of pairwise edge adjacent cells;
    \item \textbf{row convex} (resp., \textbf{column convex}) if between any two cells in the same row (resp., column), all cells between are contained as well; and
    \item \textbf{simply connected} if there exist no holes in the diagram, i.e., there exists no $(i,j)$ such that cells are contained in positions $(i-1,j),(i+1,j),(i,j-1),(i,j+1)$ and no cell is contained in the $(i,j)$ position.
\end{itemize}
See Example~\ref{ex:poset} for an illustration of these properties of diagrams.

\begin{exmp}\label{ex:poset}
    Let $\mathcal{P}=\{a,b,c,d,e,f\}$ be the poset defined by the cover relations $a\precdot c$; $b\precdot c\precdot d\precdot f$; and $c\precdot e\precdot f$. Then $\mathcal{P}$ is defined by the diagram $D$ illustrated in Figure~\ref{fig:poset} $(a)$. The cells of $D$ have been labeled by the corresponding elements of $\mathcal{P}$ to help make the identification clear. Note that the diagram $D$ is both connected and simply connected, as well as row and column convex. On the other hand, the diagram $T$ of Figure~\ref{fig:poset} $(b)$ is not row convex as a consequence of the two cells marked with a $\ast$. Similarly, $T$ is not column convex as a consequence of the two cells labeled $\dagger$. Finally, if the cell labeled by $\bigtimes$ in $T$ did not belong to $T$, then $T$ would not be simply connected.
    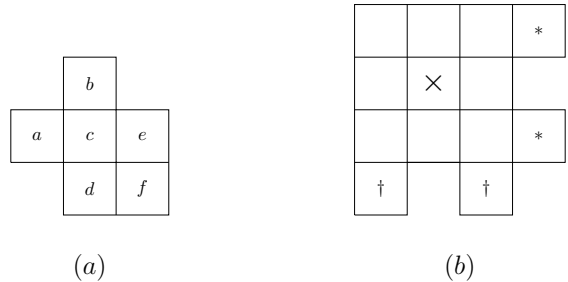
\begin{figure}[H]
        \centering
        $$\scalebox{0.7}{\begin{tikzpicture}
    \def\Node{\node [circle, fill, inner sep=1.5pt]}
    \draw (0,0)--(1,0)--(1,-1)--(3,-1)--(3,1)--(2,1)--(2,2)--(1,2)--(1,1)--(0,1)--(0,0);
    \draw (3,0)--(2,0)--(2,1)--(1,1)--(1,0)--(2,0)--(2,-1);
    \node at (0.5,0.5) {$a$};
    \node at (1.5, 1.5) {$b$};
    \node at (1.5, 0.5) {$c$};
    \node at (1.5, -0.5) {$d$};
    \node at (2.5, 0.5) {$e$};
    \node at (2.5, -0.5) {$f$};
    \node at (1.5, -2) {\Large $(a)$};
\end{tikzpicture}}\quad\quad\quad\quad\quad\quad\quad \scalebox{0.7}{\begin{tikzpicture}
    \def\Node{\node [circle, fill, inner sep=1.5pt]}
    \draw (0,0)--(1,0)--(1,1)--(2,1)--(2,0)--(3,0)--(3,1)--(4,1)--(4,2)--(3,2)--(3,3)--(4,3)--(4,4)--(0,4)--(0,0);
    \draw (1,1)--(1,4);
    \draw (2,1)--(2,4);
    \draw (3,1)--(3,4);
    \draw (0,1)--(3,1);
    \draw (0,2)--(3,2);
    \draw (0,3)--(3,3);
    \node at (3.5,1.5) {$\ast$};
    \node at (3.5,3.5) {$\ast$};
    \node at (0.5,0.5) {$\dagger$};
    \node at (2.5,0.5) {$\dagger$};
    \node at (1.5,2.5) {$\bigtimes$};
    \node at (2,-1) {\Large $(b)$};
\end{tikzpicture}}$$
        \caption{Diagrams defining a poset}
        \label{fig:poset}
    \end{figure}
\end{exmp}

\noindent
For a simply connected diagram $D$, an \textbf{outward corner} of $D$ associated to a cell $(i,j)\in D$ is a corner on the boundary of $D$ of the form $EN$ (\scalebox{0.2}{$\begin{tikzpicture}
    \draw (0,0)--(1,0)--(1,1);
\end{tikzpicture}$}) strictly to the northwest of $(i,j)$ or a corner on the boundary of $D$ of the form $NE$ (\scalebox{0.2}{$\begin{tikzpicture}
    \draw (0,0)--(0,1)--(1,1);
\end{tikzpicture}$}) strictly to the southeast of $(i,j)$. A diagram is said to have \textbf{no outward corners} if no cell in $D$ has an outward corner.

\begin{figure}[H]
    \centering
        \scalebox{0.6}{\begin{tikzpicture}

\draw[step=1.0,black,thin] (0,0) grid (6,6);
\draw[-, ultra thick, blue] (1,6)--(3,6)--(3,3)--(4,3)--(4,1)--(6,1)--(6,0)--(2,0)--(2,1)--(0,1)--(0,5)-- (1,5)--(1,6);

\draw[-,ultra thick, orange] (0,5) -- (1,5)--(1,6);
\filldraw [black] (2.5,3.5) circle (3pt);  
 \begin{scope}[shift = {(8, 0)}]
    \draw[step=1.0,black,thin] (0,0) grid (4,3);
         \draw[-, blue, ultra thick] (0,3) -- (0,1)--(2,1)--(2,0)--(4,0)--(4,2)--(2,2)--(2,3)--(0,3);
 \end{scope}
\end{tikzpicture}}
    \caption{A diagram with an outward corner, colored in orange, associated to the cell (3,3) (left) and a diagram with no outward corners (right)}
    \label{fig:cell_diagram_with_outward_corner}
\end{figure}
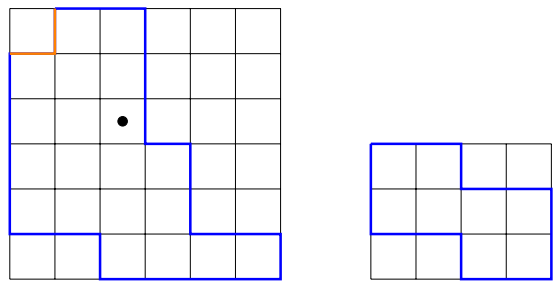

In this paper, we mainly focus on four families of posets. For $m,n\ge 2$, we denote by $[m]\times [n]$ the poset defined by the full $m\times n$ rectangular diagram. More formally, $[m]\times [n]$ is the poset with elements $(i,j)$ for $1\le i\le m$ and $1\le j\le n$, and cover relations
\begin{itemize}
    \item $(i,j)\precdot (i+1,j)$ for $1\le i<m$ and $1\le j\le n$ and
    \item $(i,j)\precdot (i,j+1)$ for $1\le i\le m$ and $1\le j<n$.
\end{itemize}
For example, the diagram $D$ defining $\mathcal{P}=[3]\times[4]$ is illustrated in Figure~\ref{fig:theposets} (a), where the cells are labeled by the associated elements of $\mathcal{P}$. Given a poset $\mathcal{P}$ and a subset $S\subset \mathcal{P}$, the \textbf{induced subposet generated by $S$} is the poset $\mathcal{P}_S$ on $S$, where $i\preceq_{\mathcal{P}_S}j$ if and only if $i\preceq_{\mathcal{P}}j$. The remaining three families of posets of interest can all be viewed as certain induced subposets of $[n]\times [n]$ for $n\ge 2$:
\begin{itemize}
    \item $([n]\times [n])\backslash\mathfrak{S}_2$ for $n\ge 2$ denotes the poset defined by the diagram consisting of all cells in the full $n\times n$ square diagram lying on or above the diagonal of the square. More formally, $([n]\times[n])\backslash \mathfrak{S}_2$ for $n\ge 2$, is the induced subposet of $[n]\times[n]$ with elements $(i,j)$ for $1\le i\le j\le n$. For example, the diagram $D$ defining $\mathcal{P}=([4]\times[4])\backslash \mathfrak{S}_2$ is illustrated in Figure~\ref{fig:theposets} (b), where the cells are labeled by the associated elements of $\mathcal{P}$.

    \item $\Phi^+(A_n)$ for $n\ge 2$, denotes the posets defined by the diagram consisting of all cells in the full $n\times n$ square diagram lying on or below the anti-diagonal of the square. More formally, $\rt{A_n}$ for $n\ge 2$, is the induced subposet of $[n]\times[n]$ with elements $(i,j)$ for $1\le i, j\le n$ satisfying $i+j \geq n+1$. For example, the diagram $D$ defining $\mathcal{P}=\rt{A_5}$ is illustrated in Figure~\ref{fig:theposets} (c), where the cells are labeled by the associated elements of $\mathcal{P}$.
    
    \item $\Phi^+(B_n)$ for $n\ge 2$, denotes the poset defined by the diagram consisting of all cells in the full $n\times n$ square diagram lying on or below both the diagonal and anti-diagonal of the square. More formally, $\rt{B_n}$ for $n\ge 2$, is the induced subposet of $[n]\times[n]$ with elements $(i,j)$ for $1 \leq i \leq j \leq 2n-1$ satisfying $i+j \geq 2n$. For example, the diagram $D$ defining $\mathcal{P}=\rt{B_3}$ is illustrated in Figure~\ref{fig:theposets} (d), where the cells are labeled by the associated elements of $\mathcal{P}$.
\end{itemize}

\begin{figure}[H]
    \centering
    $$\scalebox{0.7}{\begin{tikzpicture}
    \def\Node{\node [circle, fill, inner sep=1.5pt]}
    \draw (0,0)--(4,0)--(4,3)--(0,3)--(0,0);
    \draw (0,1)--(4,1);
    \draw (0,2)--(4,2);
    \draw (1,0)--(1,3);
    \draw (2,0)--(2,3);
    \draw (3,0)--(3,3);
    \node at (0.5, 2.5) {$(1,1)$};
    \node at (0.5, 1.5) {$(2,1)$};
    \node at (0.5, 0.5) {$(3,1)$};
    \node at (1.5, 2.5) {$(1,2)$};
    \node at (1.5, 1.5) {$(2,2)$};
    \node at (1.5, 0.5) {$(3,2)$};
    \node at (2.5, 2.5) {$(1,3)$};
    \node at (2.5, 1.5) {$(2,3)$};
    \node at (2.5, 0.5) {$(3,3)$};
    \node at (3.5, 2.5) {$(1,4)$};
    \node at (3.5, 1.5) {$(2,4)$};
    \node at (3.5, 0.5) {$(3,4)$};
    \node at (2,-1) {\Large $(a)$};
\end{tikzpicture}}\quad\quad \scalebox{0.7}{\begin{tikzpicture}
    \draw (-2,4)--(2,4)--(2,0)--(1,0)--(1,1)--(0,1)--(0,2)--(-1,2)--(-1,3)--(-2,3)--(-2,4);
    \draw (-2,3)--(2,3);
    \draw (-1,2)--(2,2);
    \draw (0,1)--(2,1);
    \draw (1,1)--(1,4);
    \draw (0,2)--(0,4);
    \draw (-1,3)--(-1,4);
    \node at (-1.5, 3.5) {$(1,1)$};
    \node at (-0.5, 3.5) {$(1,2)$};
    \node at (0.5, 3.5) {$(1,3)$};
    \node at (1.5, 3.5) {$(1,4)$};
    \node at (-0.5, 2.5) {$(2,2)$};
    \node at (0.5, 2.5) {$(2,3)$};
    \node at (1.5, 2.5) {$(2,4)$};
    \node at (0.5, 1.5) {$(3,3)$};
    \node at (1.5, 1.5) {$(3,4)$};
    \node at (1.5, 0.5) {$(4,4)$};
    \node at (0,-1) {\Large $(b)$};
\end{tikzpicture}}\quad\quad\scalebox{0.7}{
\begin{tikzpicture}
    \draw (0,0)--(5,0)--(5,5)--(4,5)--(4,4)--(3,4)--(3,3)--(2,3)--(2,2)--(1,2)--(1,1)--(0,1)--(0,0);
    \draw (1,1)--(5,1);
    \draw (2,2)--(5,2);
    \draw (3,3)--(5,3);
    \draw (4,4)--(5,4);
    \draw (1,0)--(1,1);
    \draw (2,2)--(2,0);
    \draw (3,3)--(3,0);
    \draw (4,4)--(4,0);
    \node at (4.5,4.5) {$(1,5)$};
    \node at (4.5,3.5) {$(2,5)$};
    \node at (4.5,2.5) {$(3,5)$};
    \node at (4.5,1.5) {$(4,5)$};
    \node at (4.5,0.5) {$(5,5)$};
    \node at (3.5,3.5) {$(2,4)$};
    \node at (3.5,2.5) {$(3,4)$};
    \node at (3.5,1.5) {$(4,4)$};
    \node at (3.5,0.5) {$(5,4)$};
    \node at (2.5,2.5) {$(3,3)$};
    \node at (2.5,1.5) {$(4,3)$};
    \node at (2.5,0.5) {$(5,3)$};
    \node at (1.5,1.5) {$(4,2)$};
    \node at (1.5,0.5) {$(5,2)$};
    \node at (0.5,0.5) {$(5,1)$};
    \node at (2.5,-1) {\Large $(c)$};
\end{tikzpicture}}\quad\quad\scalebox{0.7}{
\begin{tikzpicture}
    \draw (0,0)--(5,0)--(5,1)--(4,1)--(4,2)--(3,2)--(3,3)--(2,3)--(2,2)--(1,2)--(1,1)--(0,1)--(0,0);
    \draw (1,1)--(4,1);
    \draw (2,2)--(3,2);
    \draw (1,0)--(1,1);
    \draw (2,0)--(2,2);
    \draw (3,0)--(3,2);
    \draw (4,0)--(4,1);
    \node at (0.5,0.5) {$(5,1)$};
    \node at (1.5, 0.5) {$(5,2)$};
    \node at (2.5, 0.5) {$(5,3)$};
    \node at (3.5, 0.5) {$(5,4)$};
    \node at (4.5, 0.5) {$(5,5)$};
    \node at (1.5, 1.5) {$(4,2)$};
    \node at (2.5, 1.5) {$(4,3)$};
    \node at (3.5, 1.5) {$(4,4)$};
    \node at (2.5, 2.5) {$(3,3)$};
    \node at (2.5,-1) {\Large $(d)$};
\end{tikzpicture}}$$
    \caption{Diagrams defining (a) $[3]\times [4]$, (b) $([4]\times[4])\backslash\mathfrak{S}_2$, (c) $\rt{A_5}$, and (d) $\rt{B_3}$}
    \label{fig:theposets}
\end{figure}

Moving forward, certain subsets of a poset $\mathcal{P}$ play an important role. First, we let $\mathrm{Min}(\mathcal{P})$ (resp., $\mathrm{Max}(\mathcal{P})$) denote the collection of minimal (resp., maximal) elements of $\mathcal{P}$. Next, an \textbf{antichain} of a poset $\mathcal{P}$ is a set of elements in $P$, none of which are comparable to each other. At the other end of the spectrum, a \textbf{chain} is a totally ordered subset of a poset. We define the \textbf{rank} of a poset $\mathcal{P}$ to be one less than the maximal cardinality of a chain in $\mathcal{P}$, denoted $\mathrm{rk}(\mathcal{P})$. Similarly, the \textbf{width} of a finite poset $\mathcal{P}$ is the maximum cardinality of an antichain of $\mathcal{P}$. Finally, an \textbf{order ideal} of a poset $\mathcal{P}$ is a set $I \subseteq \mathcal{P}$ such that if $x\in I$ and $y\prec x$ for $y\in\mathcal{P}$, then $y\in I$. We denote the set of all order ideals of $\mathcal{P}$ by $\mathcal{J}(\mathcal{P})$. Note that an order ideal $I\in\mathcal{J}(\mathcal{P})$ is uniquely identified by the antichain corresponding to its collection of maximal elements. In fact, this identification provides a bijection between the set of order ideals and the set of antichains of a poset $\mathcal{P}$. Ongoing, we denote the order ideal $I\in \mathcal{J}(\mathcal{P})$ with maximal elements $p_1,p_2,\hdots,p_n\in\mathcal{P}$ by $I(p_1,p_2,\hdots,p_n)$.

With respect to posets $\mathcal{P}$ belonging to one of the four families defined above, our interest lies in two associated vector spaces which arise in the study of the dynamics of $\mathcal{J}(\mathcal{P})$ under the action of rowmotion. Letting $\mathcal{P}\backslash S$ denote the poset induced by the elements of $\mathcal{P}$ not contained in $I$, recall from Section~\ref{sec:intro} that, if $I \in \mathcal{J}(\mathcal{P})$, then the \textbf{rowmotion} map $\mathrm{Row}(I)$ yields the order ideal generated by the elements of $\mathrm{Min}(P\setminus I)$. Applying the rowmotion map repeatedly until the original ideal is obtained creates an orbit. In this paper, our focus will be statistics on order ideals that exhibit ``nice" behavior under rowmotion. In a more general setting, letting $\mathrm{st}:S\to \mathbb{Z}_{\ge 0}$ be a statistic on a set $S$, if a group $G$ acts on $S$ in such a way that every orbit $\mathcal{O}$ of the action has an average value of $\mathrm{st}$ equal to $\frac{\mathrm{st}~\mathcal{O}}{\# \mathcal{O}} = d$, for some constant $d$, where $\mathrm{st}~\mathcal{O}$ is the sum of the statistic values for all elements in $\mathcal{O}$ and $\#~\mathcal{O}$ is the number of elements in $\mathcal{O}$, then we say that $\mathrm{st}$ is \textbf{$d$-mesic}. A statistic is \textbf{homomesic} if it is $d$-mesic for some $d$. Note that any linear combination of homomesic statistics will also be homomesic. Consequently, the collection of homomesic statistics on $S$ with respect to the action of $G$ forms a vector space. 

Many statistics of interest in the study of homomesy are defined as linear combinations of the following functions on order ideals. For a poset $\mathcal{P}$, define the \textbf{order ideal indicator function} $\mathbbm{1}_p:\mathcal{J}(\mathcal{P})\to\mathbb{R}$ for $p\in \mathcal{P}$ by $$\mathbbm{1}_p(I)=\begin{cases}
        1, & if~p\in I \\
        0, & otherwise,
    \end{cases}$$
the \textbf{antichain indicator function} $\mathcal{T}_p^-:\mathcal{J}(\mathcal{P})\to\mathbb{R}$ for $p\in \mathcal{P}$ by $$\mathcal{T}_p^-(I)=\begin{cases}
        1, & \text{if}~p\in \mathrm{Max}(I) \\
        0, & otherwise,
    \end{cases}$$ and $\mathcal{T}_p^+:\mathcal{J}(\mathcal{P})\to\mathbb{R}$ by $$\mathcal{T}_p^+(I)=\begin{cases}
        1, & \text{if}~p\notin I~\text{and}~I\cup\{p\}\in \mathcal{J}(\mathcal{P}) \\
        0, & otherwise.
    \end{cases}$$
Moreover, the \textbf{toggleability statistic}, $\mathcal{T}_p: \mathcal{J}(\mathcal{P})\rightarrow \mathcal{J(\mathcal{P})}$ is defined by $$\mathcal{T}_p(I)=\mathcal{T}_p^+(I)-\mathcal{T}_p^-(I)=\begin{cases}
        1, & if~p\in \mathrm{Min}(\mathcal{P}\backslash I) \\
        -1, & if~p\in \mathrm{Max}(I) \\
        0, & otherwise.
    \end{cases}$$
Toggleability statistics are of special interest in the study of homomesy under rowmotion due to the following result by Striker, which implies that any linear combination of toggleability statistics and a constant is a homomesic statistic under rowmotion. 

\begin{lemma}\cite{striker2015toggle}
    For any $p \in P$, the statistic $\mathcal{T}_p$ is $0$-mesic under rowmotion.
\end{lemma}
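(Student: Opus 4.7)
The plan is to reduce the claim to a single pointwise identity relating $\mathcal{T}_p^+$ and $\mathcal{T}_p^-$ under rowmotion. Since $\mathcal{T}_p = \mathcal{T}_p^+ - \mathcal{T}_p^-$ by definition, it suffices to show, for every rowmotion orbit $\mathcal{O} \subseteq \mathcal{J}(\mathcal{P})$, that $\sum_{I \in \mathcal{O}} \mathcal{T}_p^+(I) = \sum_{I \in \mathcal{O}} \mathcal{T}_p^-(I)$; dividing by $\#\mathcal{O}$ then gives the desired $0$-mesy.

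First, I would record the following structural observation about rowmotion: for any $I \in \mathcal{J}(\mathcal{P})$, the set of maximal elements of $\mathrm{Row}(I)$ coincides with $\mathrm{Min}(\mathcal{P} \setminus I)$. This is immediate from the definition, since $\mathrm{Row}(I)$ is by construction the order ideal generated by the antichain $\mathrm{Min}(\mathcal{P} \setminus I)$, and the maximal elements of the order ideal generated by an antichain are exactly the elements of that antichain. Reading off the definitions of $\mathcal{T}_p^+$ and $\mathcal{T}_p^-$, this structural fact is equivalent to the pointwise equality $\mathcal{T}_p^-(\mathrm{Row}(I)) = \mathcal{T}_p^+(I)$, valid for every $p \in \mathcal{P}$ and every $I \in \mathcal{J}(\mathcal{P})$, because each side detects precisely whether $p \in \mathrm{Min}(\mathcal{P} \setminus I)$.

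Next, I would sum this pointwise identity over a rowmotion orbit $\mathcal{O}$ and exploit that $\mathrm{Row}$ restricts to a bijection on the finite set $\mathcal{O}$. The substitution $I \mapsto \mathrm{Row}(I)$ simply permutes $\mathcal{O}$, so $\sum_{I \in \mathcal{O}} \mathcal{T}_p^-(\mathrm{Row}(I)) = \sum_{I \in \mathcal{O}} \mathcal{T}_p^-(I)$. Combined with the pointwise identity, this yields $\sum_{I \in \mathcal{O}} \mathcal{T}_p^+(I) = \sum_{I \in \mathcal{O}} \mathcal{T}_p^-(I)$, and hence $\sum_{I \in \mathcal{O}} \mathcal{T}_p(I) = 0$, as needed.

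There is no genuinely difficult step in this argument: the entire content is concentrated in the identification $\mathrm{Max}(\mathrm{Row}(I)) = \mathrm{Min}(\mathcal{P} \setminus I)$, which is effectively built into the definition of rowmotion. The only point one must be careful about is that rowmotion orbits partition the finite set $\mathcal{J}(\mathcal{P})$, so that the averaging step is well-defined and the bijective reindexing is legitimate; this is automatic from $\mathrm{Row}$ being a bijection on $\mathcal{J}(\mathcal{P})$.
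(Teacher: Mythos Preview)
Your argument is correct and is exactly the standard proof: the identity $\mathrm{Max}(\mathrm{Row}(I))=\mathrm{Min}(\mathcal{P}\setminus I)$ gives $\mathcal{T}_p^-(\mathrm{Row}(I))=\mathcal{T}_p^+(I)$ pointwise, and summing over an orbit (on which $\mathrm{Row}$ acts bijectively) immediately yields $0$-mesy. Note that the paper does not actually supply a proof of this lemma; it is quoted from \cite{striker2015toggle}, so there is nothing in the paper to compare against beyond observing that your proof is the expected one.
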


We are now in a position to introduce the aforementioned vector spaces of interest in this paper as well as state our motivating main result. For a finite poset $\mathcal{P}$, let $I_H(\mathcal{P})$ and $A_H(\mathcal{P})$ denote the subspaces of homomesic statistics within $\text{Span}_{\mathbb{R}}(\mathbbm{1}_p~|~p\in\mathcal{P})$ and $\text{Span}_{\mathbb{R}}(\mathcal{T}^-_p~|~p\in\mathcal{P})$, respectively. The \textbf{toggleability spaces} of $\mathcal{P}$, denoted $I_T(\mathcal{P})$ and $A_T(\mathcal{P})$, are the collections of elements in $I_H(\mathcal{P})$ and $A_H(\mathcal{P})$, respectively, which are linear combinations of toggleability statistics $\mathcal{T}_p$ and a constant. We refer to $I_T(\mathcal{P})$ and $A_T(\mathcal{P})$ as the \textbf{order ideal} and \textbf{antichain toggleability spaces} of $\mathcal{P}$, respectively. Toggleability spaces were first considered in \cite{defant2021homomesy}. They are studied again in \cite{mertin2024toggleability}, where the authors completely describe the order ideal and antichain toggleability spaces for the family of fence posets. Here, as noted in Section~\ref{sec:intro}, our main goal is to establish conjectured dimensions for the toggleability spaces associated with the four families of posets introduced above. In particular, our aim is to prove Theorem~\ref{thm:main} from Section~\ref{sec:intro}, coming from the combinatorial conjectures detailed in Table 6.1 of~\cite{defant2021homomesy}, which we state again below for convenience. 

\begin{thm}\label{thm:main}~
    \begin{enumerate}
    \item[$(a)$] If $\mathcal{P}=[m]\times [n]$ for $m,n\ge 2$, then $\dim I_T(\mathcal{P})=\dim A_T(\mathcal{P})=\mathrm{rk}(\mathcal{P})+1=n+m-1$.
    \item[$(b)$] If $\mathcal{P}=([n]\times[n])\backslash \mathfrak{S}_2$ for $n\ge 2$, then $\dim I_T(\mathcal{P})=\dim A_T(\mathcal{P})=\mathrm{rk}(\mathcal{P})+1=2n-1$.
    \item[$(c)$] If $\mathcal{P}=\Phi^+(A_n)$ for $n\ge 2$, then $\dim I_T(\mathcal{P})=\dim A_T(\mathcal{P})=\mathrm{rk}(\mathcal{P})+1=n$.
    \item[$(d)$] If $\mathcal{P}=\Phi^+(B_n)$ for $n\ge 2$, then $\dim I_T(\mathcal{P})=\dim A_T(\mathcal{P})=\mathrm{rk}(\mathcal{P})+1=2n-1$.
\end{enumerate}
\end{thm}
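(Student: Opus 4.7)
My plan is to prove Theorem~\ref{thm:main} by treating $\dim I_T(\mathcal{P})$ and $\dim A_T(\mathcal{P})$ separately, with each split into matching lower and upper bounds and handled uniformly across the four families by exploiting the common antidiagonal rank structure of their defining diagrams. In each case the cells are partitioned into antidiagonals $\mathcal{R}_0,\ldots,\mathcal{R}_{\mathrm{rk}(\mathcal{P})}$ that coincide with the rank levels of $\mathcal{P}$.

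For the lower bound on $\dim I_T(\mathcal{P})$, I would exhibit $\mathrm{rk}(\mathcal{P})+1$ explicit linearly independent elements indexed by these antidiagonals. For each $k$, I would construct a combination $f_k$ of toggleability statistics $\mathcal{T}_p$ and a constant whose expression in $\mathrm{Span}_{\mathbb{R}}(\mathbbm{1}_p)$ has support in rank levels $\ge k$ and is nonzero on $\mathcal{R}_k$. The guiding identity is $\mathcal{T}_p^+(I)+\mathbbm{1}_p(I)=\mathbbm{1}(\{q : q \prec p\}\subseteq I)$, together with its dual for $\mathcal{T}_p^-$; summing $\mathcal{T}_p = \mathcal{T}_p^+ - \mathcal{T}_p^-$ along a single antidiagonal leads to telescoping cancellations between $\mathcal{T}_p^+$ at rank $k$ and $\mathcal{T}_q^-$ at cover-adjacent cells of rank $k-1$. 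After cancellation, the $f_k$ reduce to an explicit element of $\mathrm{Span}_{\mathbb{R}}(\mathbbm{1}_p)$ supported from rank $k$ upward, and the resulting triangular structure forces linear independence.

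For the matching upper bound, I would evaluate an arbitrary element of $I_T(\mathcal{P})$ on a distinguishing collection of order ideals, for instance the rank filters $I_k = \{p \in \mathcal{P}: \mathrm{rk}(p) < k\}$ together with a small number of variants obtained by swapping a single cover relation at the boundary. The linear constraints imposed by these evaluations confine the coefficient vector parameterizing any element of $I_T(\mathcal{P})$ to at most $\mathrm{rk}(\mathcal{P})+1$ free parameters. For $\dim A_T(\mathcal{P})$ the argument is parallel, replacing $\mathbbm{1}_p$ by $\mathcal{T}_p^-$ throughout and appealing to the rook statistics of~\cite{chan2017expected}: antichains in each of the four families are enumerated by noncrossing configurations whose rank-level counts are known homomesic statistics, which gives the lower bound, while the upper bound follows from the same style of order-ideal evaluations, since $\mathcal{T}_p^-$ and $\mathbbm{1}_p$ are related by a unitriangular change of basis adapted to the rank filtration.

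The hardest part will be the upper bound for $\Phi^+(A_n)$ and $\Phi^+(B_n)$. Unlike $[m]\times[n]$ or $([n]\times[n])\backslash \mathfrak{S}_2$, the root posets have diagonal boundary cells where the telescoping cancellations can fail; verifying that no additional toggleability relations emerge beyond the antidiagonal ones requires a careful case analysis along the ``fold'' of the diagram. I expect this to absorb most of the work of the proof and to motivate the more general diagrammatic framework pursued in Sections~\ref{sec:conj1} and~\ref{sec:antichain}.
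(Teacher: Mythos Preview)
Your proposal rests on organizing everything by rank levels (antidiagonals $i+j=\text{const}$), but the paper's argument is built around \emph{diagonals} $i-j=\text{const}$, and this distinction is not cosmetic. The key structural constraint on $I_T(\mathcal{P})$ that the paper proves (Theorem~\ref{thm:diamond}) is that in any element $f=c+\sum_p c_p\mathcal{T}_p\in I_T(\mathcal{P})$ the coefficients $c_p$ are constant along diagonals, a statement obtained by evaluating $f$ at the four principal order ideals of a single $2\times 2$ square. This, together with Lemma~\ref{lem:root_zero} for the root posets, immediately gives the upper bound. The matching lower bound basis (Proposition~\ref{prop:pchain2}) is likewise indexed by diagonals, not rank levels.

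Your rank-level triangularity claim for the lower bound is actually false, so this is a genuine gap rather than a stylistic difference. Already for $\mathcal{P}=[2]\times[2]$ there is no nonzero element of $I_T(\mathcal{P})$ whose $\mathbbm{1}_p$-expression is supported only on the top rank: the paper's constraints force the coefficient of $\mathbbm{1}_{(1,1)}$ to equal that of $\mathbbm{1}_{(2,2)}$, so $\mathbbm{1}_{(2,2)}$ alone is not in $I_T$. Hence no basis $\{f_k\}$ with ``support in rank levels $\ge k$ and nonzero on $\mathcal{R}_k$'' can exist. Relatedly, your telescoping heuristic (``$\mathcal{T}_p^+$ at rank $k$ cancelling $\mathcal{T}_q^-$ at rank $k-1$'') does not arise from summing $\mathcal{T}_p$ along a single rank level, since that sum only involves $\mathcal{T}^\pm$ at rank $k$ itself. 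Finally, your last paragraph seems to reverse the two directions: the concern that ``no additional toggleability relations emerge'' is a lower-bound concern, not an upper-bound one; in fact the paper finds the root-poset upper bound \emph{easier} because the extra Lemma~\ref{lem:root_zero} constraint kills a whole diagonal's worth of coefficients. For $A_T$, your unitriangular change-of-basis idea is close to the paper's Corollary~\ref{cor:half_rooks}, which shows $\dim A_T(\mathcal{P})=\dim I_T(\mathcal{P})$ directly and lets the $I_T$ computation do all the work.
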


\noindent
The proof of Theorem~\ref{thm:main} is broken up into two sections, with Section~\ref{sec:conj1} focusing on $I_T(\mathcal{P})$ and Section~\ref{sec:antichain} focusing on $A_T(\mathcal{P})$.

\section{Order Ideals}\label{sec:conj1}

In this section, we prove the conjectures of Hopkins, Poznanovi\'{c}, and Propp \cite{defant2021homomesy} concerning $\dim I_T(\mathcal{P})$ for posets $\mathcal{P}$ belonging to the four families of posets introduced in Section~\ref{sec:background}. We collect these results into Theorem \ref{thm:main1} below.

\begin{thm}\label{thm:main1}~
\begin{enumerate}
    \item[$(a)$] If $\mathcal{P}=[m]\times [n]$ for $m,n\ge 2$, then $\dim I_T(\mathcal{P})=\mathrm{rk}(\mathcal{P})+1=n+m-1$.
    \item[$(b)$] If $\mathcal{P}=([n]\times[n])\backslash \mathfrak{S}_2$ for $n\ge 2$, then $\dim I_T(\mathcal{P})=\mathrm{rk}(\mathcal{P})+1=2n-1$.
    \item[$(c)$] If $\mathcal{P}=\Phi^+(A_n)$ for $n\ge 2$, then $\dim I_T(\mathcal{P})=\mathrm{rk}(\mathcal{P})+1=n$.
    \item[$(d)$] If $\mathcal{P}=\Phi^+(B_n)$ for $n\ge 2$, then $\dim I_T(\mathcal{P})=\mathrm{rk}(\mathcal{P})+1=2n-1$.
\end{enumerate}
\end{thm}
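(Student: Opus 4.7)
The plan is to establish matching lower and upper bounds on $\dim I_T(\mathcal{P})$ for each of the four families, both equaling $\mathrm{rk}(\mathcal{P}) + 1$.

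For the lower bound, in each family I would construct $\mathrm{rk}(\mathcal{P}) + 1$ linearly independent elements of $I_T(\mathcal{P})$ indexed by an appropriate combinatorial slicing of the poset. For the rectangle $[m]\times[n]$, the natural basis consists of the diagonal/file sums $F_k=\sum_{j-i=k}\mathbbm{1}_{(i,j)}$ for $k\in\{1-m,\ldots,n-1\}$; for the shifted staircase and the two root posets, the natural basis consists instead of the rank-level sums $R_k=\sum_{\mathrm{rk}(p)=k}\mathbbm{1}_p$. I would verify each basis element lies in $I_T(\mathcal{P})$ by exhibiting explicit weights $w(p)$ and a constant $c$ such that the element equals $\sum_{p\in\mathcal{P}}w(p)\,\mathcal{T}_p+c$, with weights that typically decrease linearly away from the center of the slice. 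Linear independence is immediate from disjoint supports.

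For the upper bound, the key observation is that any $f=\sum_p c_p\mathcal{T}_p+c_0=\sum_p d_p\mathbbm{1}_p$ in $I_T(\mathcal{P})$ must satisfy $f(I\cup\{q\})-f(I)=d_q$ (depending only on $q$) for every cover $I\lessdot I\cup\{q\}$ in $\mathcal{J}(\mathcal{P})$. A direct local computation of $\mathcal{T}_p(I\cup\{q\})-\mathcal{T}_p(I)$ gives: the change equals $-2$ when $p=q$; equals $+1$ when $q\precdot p$ in $\mathcal{P}$ and every lower cover of $p$ other than $q$ lies in $I$; equals $+1$ when $p\precdot q$ in $\mathcal{P}$ and $p\in\mathrm{Max}(I)$; and vanishes otherwise. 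By varying $I$ to toggle these side conditions on and off while $q$ remains minimal in $\mathcal{P}\setminus I$, one derives linear relations between the $c_p$'s attached to cover-neighbors of $q$ in $\mathcal{P}$. Propagating these relations along chains of cover relations in $\mathcal{P}$ forces the $c_p$'s (equivalently, the $d_p$'s) to be constant on each slice of the appropriate partition, bounding the dimension by the number of slices, namely $\mathrm{rk}(\mathcal{P}) + 1$.

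The principal obstacle lies in the upper bound: correctly identifying the slicing for each family and ensuring that the local constraints propagate globally. The contrast between the rectangle (where file-constancy is the forced condition) and the other three families (where rank-level-constancy is the forced condition) reflects the shape of the underlying diagram rather than any intrinsic poset property, and must be handled with an honest case distinction. For the root posets $\Phi^+(A_n)$ and $\Phi^+(B_n)$, the diagonal boundaries produce cells with fewer cover-neighbors, complicating the propagation of constraints near the boundary. A cleaner unified approach---foreshadowed by the background definitions of outward corners and by the integer-partition poset result highlighted in the introduction---would be to prove a general dimension formula for posets defined by sufficiently nice (simply connected, row- and column-convex, outward-corner-free) diagrams, into which each of the four families naturally fits.
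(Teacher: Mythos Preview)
Your overall two-sided strategy matches the paper's, and your local-difference idea for the upper bound is essentially the engine behind the paper's ``diamond'' relation (Theorem~3.2/3.4). But the specific slicing you propose for three of the four families is wrong, and this breaks both directions of your argument.

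\textbf{The gap.} You assert that for $([n]\times[n])\backslash\mathfrak{S}_2$, $\Phi^+(A_n)$, and $\Phi^+(B_n)$ the rank-level sums $R_k=\sum_{\mathrm{rk}(p)=k}\mathbbm{1}_p$ lie in $I_T(\mathcal{P})$ and that the upper-bound constraints force the coefficients to be constant on rank levels. Neither is true. Already for $\mathcal{P}=\Phi^+(A_2)$ (the three-element ``V''), one checks directly that $R_0=\mathbbm{1}_{(1,2)}+\mathbbm{1}_{(2,1)}\notin I_T(\mathcal{P})$: the linear system for writing $R_0=c+\sum c_p\mathcal{T}_p$ is inconsistent. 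Conversely, the element $-1+\mathcal{T}_{(1,2)}=\mathbbm{1}_{(2,2)}-2\mathbbm{1}_{(1,2)}$ is in $I_T(\mathcal{P})$ but assigns different $d$-values to the two rank-$0$ elements $(1,2)$ and $(2,1)$, so rank-level constancy fails. The same phenomenon occurs in the shifted staircase: the paper's basis element $2\mathbbm{1}_{(i,i)}-\mathbbm{1}_{(i-1,i)}-\mathbbm{1}_{(i,i+1)}$ is not constant on the rank level $i+j=2i$.

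\textbf{What actually works.} In all four families the correct slicing is by \emph{files} $i-j=\text{const}$, exactly as you do for the rectangle. The diamond relation $c_{(i,j)}=c_{(i+1,j+1)}$ (on the $\mathcal{T}_p$-coefficients, not the $\mathbbm{1}_p$-coefficients) holds whenever the full $2\times 2$ square $\{(i,j),(i+1,j),(i,j+1),(i+1,j+1)\}$ lies in the diagram; this is what your local-difference computation would actually yield if carried out. For the shifted staircase and $\Phi^+(B_n)$, the diagonal boundary cells $(i,i)$ lack such squares and each contributes its own degree of freedom, and the count of files plus these boundary cells gives $2n-1$. For $\Phi^+(A_n)$ and $\Phi^+(B_n)$ there is a further ingredient you are missing entirely: the file count alone exceeds $\mathrm{rk}(\mathcal{P})+1$, and one needs an additional constraint (the paper's Lemma~3.6) forcing $c_p=0$ whenever $p$ sits just above two minimal elements with disjoint principal ideals---this kills exactly the right number of files.
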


\begin{rem}
    It is well known that the rank values of the posets considered in Theorem~\ref{thm:main1} are equal to the implied numerical values, e.g., $\mathrm{rk}([m]\times[n])=n+m-2$. Consequently, the corresponding equalities are assumed ongoing.
\end{rem}

We break the proof of Theorem~\ref{thm:main1} into two sections, first showing $\dim I_T(\mathcal{P})\le \mathrm{rk}(\mathcal{P})+1$ in Section~\ref{sec:UB} and then $\dim I_T(\mathcal{P})\ge \mathrm{rk}(\mathcal{P})+1$ in Section~\ref{sec:LB}. Following the proof of Theorem~\ref{thm:main1}, we apply the results of Sections~\ref{sec:UB} and~\ref{sec:LB} to an additional family of posets defined by integer partitions in Section~\ref{sec:ip}.

\subsection{Upper bound: $\dim I_T(\mathcal{P})\le \mathrm{rk}(\mathcal{P})+1$}\label{sec:UB}

In this subsection, we show that $\dim I_T(\mathcal{P})\le \mathrm{rk}(\mathcal{P})+1$ for the posets $\mathcal{P}$ of Theorem~\ref{thm:main1}. To do so, we require Theorems~\ref{thm:Diamond} and~\ref{thm:diamond} as well as Lemmas~\ref{lem:pchain} and \ref{lem:root_zero} below, which identify restrictions satisfied by elements of $I_T(\mathcal{P})$ for certain posets $\mathcal{P}$. 

We start with Theorem~\ref{thm:Diamond}, which concerns restrictions involving pairs of elements $p_1$ and $p_4$ in a poset $\mathcal{P}$ for which there exists $p_2,p_3\in\mathcal{P}$ satisfying $p_1\precdot p_2\precdot p_4$ and $p_1\precdot p_3\precdot p_4$. Then, in Theorem~\ref{thm:diamond}, we show that Theorem~\ref{thm:Diamond} holds for all appropriate elements of posets defined by connected diagrams which are both row and column convex.

\begin{thm}\label{thm:Diamond}
    Let $\mathcal{P}$ be a poset with $p_1,p_2,p_3,p_4\in\mathcal{P}$ satisfying $p_1\precdot p_2\precdot p_4$ and $p_1\precdot p_3\precdot p_4$. 
    Setting $S_i=\textrm{Min}(\mathcal{P}\backslash I(p_i))$ for $i\in[3]$, $S_{2,3}=\textrm{Min}(\mathcal{P}\backslash I(p_2,p_3))$, $S_2^1=S_2\backslash S_1$, and $S_3^1=S_3\backslash S_1$,
    suppose that
    \begin{itemize}
        \item[$(a)$] $S_2^1, S_3^1, \{p_4\} \subseteq S_{2,3}$,
    \item[$(b)$] $S_1, S_{2,3} \setminus \{p_4\} \subseteq S_2 \cup S_3$, and
    \item[($c$)] $S_{2,3} \cap S_1 = S_2 \cap S_3$.
    \end{itemize}
    Then for $$f=c+\sum_{p\in\mathcal{P}}c_p\mathcal{T}_p\in I_T(\mathcal{P}),$$ we have $c_{p_1}=c_{p_4}$.
\end{thm}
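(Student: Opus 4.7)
The plan is to exploit that any $f \in I_T(\mathcal{P})$ admits two descriptions: the toggleability form $f = c + \sum_{p} c_p \mathcal{T}_p$ and, since $f \in I_H(\mathcal{P}) \subseteq \mathrm{Span}_{\mathbb{R}}(\mathbbm{1}_p \mid p \in \mathcal{P})$, an indicator form $f = \sum_p d_p \mathbbm{1}_p$. Evaluating both at the four order ideals $I(p_1), I(p_2), I(p_3), I(p_2, p_3)$ and comparing, I will extract $c_{p_1} = c_{p_4}$ from the alternating sum
\[
\Sigma := f(I(p_1)) + f(I(p_2, p_3)) - f(I(p_2)) - f(I(p_3)).
\]

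On the toggleability side, since $\mathrm{Max}(I(p_i)) = \{p_i\}$ for $i \in \{1,2,3\}$ and $\mathrm{Max}(I(p_2,p_3)) = \{p_2, p_3\}$, the constants and the terms $c_{p_2}, c_{p_3}$ cancel, leaving
\[
\Sigma = -c_{p_1} + \sum_{p \in S_1} c_p - \sum_{p \in S_2} c_p - \sum_{p \in S_3} c_p + \sum_{p \in S_{2,3}} c_p.
\]
The main computation is to show that the four $S$-sums collapse to the single term $c_{p_4}$. For each $p$ in the union of the four $S$-sets, the coefficient of $c_p$ is determined by its membership pattern $(\mathbbm{1}_{S_1}, \mathbbm{1}_{S_2}, \mathbbm{1}_{S_3}, \mathbbm{1}_{S_{2,3}})(p)$, a priori one of sixteen. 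I will use hypotheses (a), (b), (c) to discard every nonzero-coefficient pattern except $(0,0,0,1)$: hypothesis (a) rules out patterns with $p \in (S_2 \setminus S_1) \cup (S_3 \setminus S_1)$ but $p \notin S_{2,3}$; hypothesis (b) rules out patterns with $p \in S_1$ or $p \in S_{2,3} \setminus \{p_4\}$ lying outside $S_2 \cup S_3$; and hypothesis (c), in both directions, rules out any pattern violating $S_1 \cap S_{2,3} = S_2 \cap S_3$. The surviving pattern $(0,0,0,1)$ is realized only by $p_4$ — membership in $S_{2,3}$ is part of (a), while $p_2, p_3 \prec p_4$ lying outside $I(p_1), I(p_2), I(p_3)$ prevents $p_4$ from being minimal in the respective complements — giving $\Sigma = -c_{p_1} + c_{p_4}$.

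On the indicator side, using the inclusions $I(p_1) \subseteq I(p_2), I(p_3) \subseteq I(p_2, p_3)$ together with $I(p_2,p_3) = I(p_2) \cup I(p_3)$, a brief case check on $(\mathbbm{1}_p(I(p_1)), \mathbbm{1}_p(I(p_2)), \mathbbm{1}_p(I(p_3)), \mathbbm{1}_p(I(p_2,p_3)))$ yields
\[
\Sigma = -\sum_{p \in (I(p_2) \cap I(p_3)) \setminus I(p_1)} d_p.
\]
To finish, I will show the indexing set is empty. Suppose for contradiction $q$ is minimal in it; every $r \prec q$ satisfies $r \preceq p_2$ and $r \preceq p_3$ and so, by the minimality of $q$, lies in $I(p_1)$, forcing $q \in \mathrm{Min}(\mathcal{P} \setminus I(p_1)) = S_1$. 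Since $p_2, p_3$ are incomparable (forced by $p_1 \precdot p_i \precdot p_4$), the inclusions $q \preceq p_2, p_3$ give $q \neq p_2, p_3$, so $q \in I(p_2) \cap I(p_3)$ and therefore $q \notin S_2 \cup S_3$, contradicting condition (b). Hence $\Sigma = 0$ and, equating with the toggleability computation, $c_{p_1} = c_{p_4}$. The principal obstacle is the systematic case analysis in the toggleability step: checking which of (a), (b), (c) eliminates each of the sixteen membership patterns is exactly what shows these three hypotheses are just what is needed.
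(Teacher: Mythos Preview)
Your proof is correct and follows the same overall strategy as the paper: evaluate the alternating sum $f(I(p_1))+f(I(p_2,p_3))-f(I(p_2))-f(I(p_3))$ in both the indicator and toggleability forms and equate. Your toggleability computation is organized as a membership-pattern case analysis rather than the paper's explicit disjoint decompositions $(a')$--$(d')$, but these are equivalent; notably, on the indicator side you explicitly prove $(I(p_2)\cap I(p_3))\setminus I(p_1)=\emptyset$ from hypothesis~(b), a point the paper's computation uses without justification.
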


In Example~\ref{ex:Diamond}, we provide a sample application of Theorem~\ref{thm:Diamond}.

\begin{exmp}\label{ex:Diamond}
    Let $\mathcal{P}$ be the poset defined by the diagram $D$ illustrated in Figure~\ref{fig:Diamond}. For the elements $p_1,p_2,p_3,p_4\in\mathcal{P}$ labeled in $D$, we have $p_1\precdot p_2\precdot p_4$ and $p_1\precdot p_3\precdot p_4$. Moreover, in the notation of Theorem~\ref{thm:Diamond}, $$S_1=\{s_1,s_3,p_3\},\quad S_2=\{s_1,p_3\},\quad S_3=\{s_1,s_3,s_6\},\quad S_{2,3}=\{s_1,s_6,p_4\},\quad S_2^1=\emptyset,\quad S_3^1=\{s_6\},$$ $$S_{2,3}\backslash(S_2^1\cup S_3^1\cup\{p_4\})=\{s_1\},\quad (S_2\cap S_1)\backslash S_{2,3}=\{p_3\},\quad\text{and}\quad (S_3\cap S_1)\backslash S_{2,3}=\{s_3\}.$$ One can check that $p_1,p_2,p_3,p_4\in \mathcal{P}$ satisfy the hypotheses of Theorem~\ref{thm:Diamond}. Thus, applying Theorem~\ref{thm:Diamond}, if $\displaystyle f=c+\sum_{p\in\mathcal{P}}c_p\mathcal{T}_p\in I_T(\mathcal{P})$, then $c_{p_1}=c_{p_4}$.

    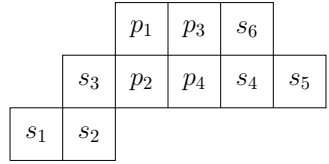
\begin{figure}[H]
        \centering
        $$\scalebox{0.7}{\begin{tikzpicture}
            \draw (0,0)--(2,0)--(2,1)--(6,1)--(6,2)--(5,2)--(5,3)--(2,3)--(2,2)--(1,2)--(1,1)--(0,1)--(0,0);
            \draw (1,0)--(1,1)--(2,1)--(2,2)--(5,2);
            \draw (3,1)--(3,3);
            \draw (4,1)--(4,3);
            \draw (5,1)--(5,2);
            \node at (0.5,0.5) {\Large $s_1$};
            \node at (1.5,0.5) {\Large $s_2$};
            \node at (1.5,1.5) {\Large $s_3$};
            \node at (2.5,1.5) {\Large $p_2$};
            \node at (3.5,1.5) {\Large $p_4$};
            \node at (4.5,1.5) {\Large $s_4$};
            \node at (5.5,1.5) {\Large $s_5$};
            \node at (2.5,2.5) {\Large $p_1$};
            \node at (3.5,2.5) {\Large $p_3$};
            \node at (4.5,2.5) {\Large $s_6$};
        \end{tikzpicture}}$$
        \caption{Example elements described in Theorem~\ref{thm:Diamond}}
        \label{fig:Diamond}
    \end{figure}

\end{exmp}

\begin{proof}[Proof of Theorem \ref{thm:Diamond}]
   First, we show that conditions $(a)$, $(b)$, and $(c)$ in the statement of Theorem~\ref{thm:Diamond} together imply the following:
    \begin{itemize}
        \item[$(a')$] $S_2^1\cup S_3^1\cup [S_{2,3}\backslash(S^1_2\cup S^1_3\cup\{p_4\})]\cup \{p_4\}=S_{2,3}$
        \item[$(b')$] $[(S_2\cap S_1)\backslash S_{2,3}]\cup[(S_3\cap S_1)\backslash S_{2,3}]\cup [S_{2,3}\backslash (S_2^1\cup S_3^1\cup\{p_4\})]=S_1$,
        \item[$(c')$] $S_i^1\cup [S_{2,3}\backslash(S^1_2\cup S^1_3\cup\{p_4\})]\cup [(S_i\cap S_1)\backslash S_{2,3}]=S_i$ for $i=2,3$, and
        \item[$(d')$]$S_2^1, S_3^1, S_{2,3}\setminus(S_2^1 \cup S_3^1 \cup \{p_4\})$, $(S_2 \cap S_1) \setminus S_{2,3}$, $(S_3 \cap S_1)\setminus S_{2,3}$, and $\{p_4\}$ are mutually disjoint.
\end{itemize}
The fact that $(a)$ implies $(a')$ is immediate. Moving to $(b')$, observe by $(b)$ that $S_{2,3} \setminus (S_2^1 \cup S_3^1 \cup \{p_4\}) \subseteq S_1$. Therefore, $[(S_2\cap S_1)\backslash S_{2,3}]\cup[(S_3\cap S_1)\backslash S_{2,3}]\cup [S_{2,3}\backslash (S_2^1\cup S_3^1\cup\{p_4\})] \subseteq S_1$. Now, consider $x \in S_1$. Note that $p_4 \not\in S_1$, i.e., $x\neq p_4$, because $p_2 \in \mathcal{P}\setminus I(p_1)$ and $p_2 \precdot p_4$ so $p_4$ is not minimal here. 
Thus, if $x \in S_{2,3}$, then $x \in [S_{2,3}\backslash (S_2^1\cup S_3^1\cup\{p_4\})]$. On the other hand, if $x \not\in S_{2,3}$, then $x \in [(S_2 \cup S_3) \cap S_1] \setminus S_{2,3}$ by $(b)$.  Therefore, $[(S_2\cap S_1)\backslash S_{2,3}]\cup[(S_3\cap S_1)\backslash S_{2,3}]\cup [S_{2,3}\backslash (S_2^1\cup S_3^1\cup\{p_4\})] \supseteq S_1$, and we have now shown $(b')$.

For $(c')$, as stated previously, we have that $S_{2,3} \setminus (S_2^1 \cup S_3^1 \cup \{p_4\}) \subseteq S_1$ and so, considering $(c)$, we have $S_{2,3} \setminus (S_2^1 \cup S_3^1 \cup \{p_4\}) \subseteq S_2 \cap S_3$. Thus, for $i=2,3$, we have that $S_{2,3} \setminus (S_2^1 \cup S_3^1 \cup \{p_4\}) \subseteq S_i$ and, consequently, $S_i^1\cup [S_{2,3}\backslash(S^1_2\cup S^1_3\cup\{p_4\})]\cup [(S_i\cap S_1)\backslash S_{2,3}] \subseteq S_i$.  Consider $x \in S_i$ for $i=2$ or $3$. If $x \not\in S_1$, then $x\in S_i^1$; if $x\in S_1$ and $x \not\in S_{2,3}$, then $x \in [(S_i \cap S_1)\setminus S_{2,3}]$; and, finally,  if $x \in S_1$ and $x \in S_{2,3}$, then $x \in S_{2,3} \setminus (S_2^1 \cup S_3^1 \cup \{p_4\})$ since $S_{2,3} \setminus (S_2^1 \cup S_3^1 \cup \{p_4\}) \subseteq S_1$ and this set subtraction does not remove any elements of $S_{2,3}$ also contained $S_1$. Thus, $S_i^1\cup [S_{2,3}\backslash(S^1_2\cup S^1_3\cup\{p_4\})]\cup [(S_i\cap S_1)\backslash S_{2,3}] \supseteq S_1$, and we have shown $(c')$. Finally, for $(d')$, observe that $p_3 \in \mathcal{P} \setminus I(p_2)$ and $p_3 \precdot p_4$ so $p_4 \not\in S_2$. Similar reasoning yields $p_4 \not\in S_3$. From here, each pair can be checked to be disjoint either by definition or condition $(c)$.

Now, assume that $f=\sum_{p\in\mathcal{P}}a_p\mathbbm{1}_p.$ Then letting $$C=f(I(p_2,p_3))-f(I(p_2))-f(I(p_3))+f(I(p_1)),$$ we have that
    \begin{align*}
        C&=\left(\sum_{p\preceq p_1}a_p+\sum_{\substack{p\preceq p_2 \\ p\npreceq p_1}}a_p+\sum_{\substack{p\preceq p_3 \\ p\npreceq p_1}}a_p\right)-\left(\sum_{p\preceq p_1}a_p+\sum_{\substack{p\preceq p_2 \\ p\npreceq p_1}}a_p+\right)-\left(\sum_{p\preceq p_1}a_p+\sum_{\substack{p\preceq p_3 \\ p\npreceq p_1}}a_p\right)+\sum_{p\preceq p_1}a_p\\
        &=0.
    \end{align*}
    On the other hand, expressing $C$ in terms of the $c_p$ for $p\in\mathcal{P}$, we have
    \begin{align*}
        C&=\left(c+c_{p_4}-c_{p_2}-c_{p_3}+\sum_{p\in S_{2,3}\backslash\{p_4\}}c_p\right)-\left(c-c_{p_2}+\sum_{p\in S_2}c_p\right)-\left(c-c_{p_3}+\sum_{p\in S_3}c_p\right)\\
        &~~~~~~+\left(c-c_{p_1}+\sum_{p\in S_1}c_p\right)\\
        &=\left(c+c_{p_4}-c_{p_2}-c_{p_3}+\sum_{p\in S^1_2}c_p+\sum_{p\in S^1_3}c_p+\sum_{p\in S_{2,3}\backslash (S_2^1\cup S_3^1\cup\{p_4\})}c_p\right)\\
        &~~~~~~-\left(c-c_{p_2}+\sum_{p\in S^1_2}c_p+\sum_{p\in S_{2,3}\backslash (S_2^1\cup S_3^1\cup\{p_4\})}c_p+\sum_{p\in (S_2\cap S_1)\backslash S_{2,3}}c_p\right)\\
        &~~~~~~-\left(c-c_{p_3}+\sum_{p\in S^1_3}c_p+\sum_{p\in S_{2,3}\backslash (S_2^1\cup S_3^1\cup\{p_4\})}c_p+\sum_{p\in (S_3\cap S_1)\backslash S_{2,3}}c_p\right)\\
        &~~~~~~+\left(c-c_{p_1}+\sum_{p\in S_{2,3}\backslash (S_2^1\cup S_3^1\cup\{p_4\})}c_p+\sum_{p\in (S_2\cap S_1)\backslash S_{2,3}}c_p+\sum_{p\in (S_3\cap S_1)\backslash S_{2,3}}c_p\right)\\
        &=c_{p_4}-c_{p_1}.
    \end{align*}
    Consequently, $c_{p_4}-c_{p_1}=C=0$; that is, $c_{p_4}=c_{p_1}$, as desired.
\end{proof}

\begin{rem}
    One can show that conditions $(a)$, $(b)$, and $(c)$ of Theorem~\ref{thm:Diamond} are, in fact, equivalent to conditions $(a')$, $(b')$, $(c')$, and $(d')$ from the proof above.
\end{rem}

\begin{thm}\label{thm:diamond}
    Let $\mathcal{P}$ be a poset defined by a connected diagram which is both row and column convex. Suppose that $p_1,p_2,p_3,p_4\in\mathcal{P}$ satisfy $p_1\precdot p_2\precdot p_4$ and $p_1\precdot p_3\precdot p_4$. Then for $$f=c+\sum_{p\in\mathcal{P}}c_p\mathcal{T}_p\in I_T(\mathcal{P}),$$ we have $c_{p_1}=c_{p_4}$.
\end{thm}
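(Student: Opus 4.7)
The plan is to reduce Theorem~\ref{thm:diamond} directly to Theorem~\ref{thm:Diamond} by verifying its three hypotheses $(a)$, $(b)$, and $(c)$. The first step is to use the row and column convexity, together with connectedness, to show that every cover relation in $\mathcal{P}$ has the form $(i,j)\precdot(i+1,j)$ or $(i,j)\precdot(i,j+1)$: a hypothetical diagonal cover would force the cells $(i+1,j)$ and $(i,j+1)$ to be absent from $D$, and a short check using row and column convexity then splits $D$ into two disconnected pieces (one around $(i,j)$, the other around the would-be cover successor), contradicting connectedness. Assuming $p_2\neq p_3$ (the only case in which the conclusion is forced by toggleability alone), this classification pins the four elements down as $p_1=(i,j)$, $\{p_2,p_3\}=\{(i+1,j),(i,j+1)\}$, and $p_4=(i+1,j+1)$, all in $D$. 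The key identities that follow from a coordinate comparison are $I(p_2)\cap I(p_3)=I(p_1)$, $I(p_2)\cup I(p_3)=I(p_2,p_3)$, $I(p_2)\setminus I(p_1)=\{(i+1,l)\in D:l\le j\}$, and $I(p_3)\setminus I(p_1)=\{(k,j+1)\in D:k\le i\}$.

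With the diamond pinned down, condition $(c)$ is the quickest to dispense with: if $x\in S_2\cap S_3$, then every predecessor of $x$ in $D$ lies in $I(p_2)\cap I(p_3)=I(p_1)$ and $x\notin I(p_2)\cup I(p_3)=I(p_2,p_3)$, placing $x$ in $S_1\cap S_{2,3}$; the reverse inclusion is parallel. For $(a)$, the membership $p_4\in S_{2,3}$ follows because every strict predecessor of $p_4$ has coordinates $(k,l)$ with $k\le i$ or $l\le j$, hence lies in $I(p_3)\cup I(p_2)$. If $x\in S_2\setminus S_1$, then $x$ has a predecessor in $I(p_2)\setminus I(p_1)$, which must take the form $(i+1,l_0)$ with $l_0\le j$; this forces the row index of $x$ to be at least $i+1$, so $x\notin I(p_3)$ and therefore $x\in S_{2,3}$. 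The inclusion $S_3^1\subseteq S_{2,3}$ is symmetric. Likewise, the first half of $(b)$ is short: any $x\in S_1$ has all predecessors in $I(p_1)\subseteq I(p_2)\cap I(p_3)$, and $x\notin I(p_1)=I(p_2)\cap I(p_3)$ forces $x$ to avoid one of $I(p_2),I(p_3)$, placing $x$ in the corresponding $S_2$ or $S_3$.

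The main obstacle, I expect, will be the second half of $(b)$, namely $S_{2,3}\setminus\{p_4\}\subseteq S_2\cup S_3$. The argument I have in mind is by contradiction: suppose $x\in S_{2,3}$ with $x\neq p_4$ belongs to neither $S_2$ nor $S_3$. Then $x$ must have a predecessor in $I(p_3)\setminus I(p_2)=\{(k,j+1)\in D:k\le i\}$ and another in $I(p_2)\setminus I(p_3)=\{(i+1,l)\in D:l\le j\}$. The first forces the column index of $x$ to be at least $j+1$ and the second forces the row index to be at least $i+1$, so $p_4=(i+1,j+1)\preceq x$. Since $x\neq p_4$, the cell $p_4\in D$ is then a strict predecessor of $x$, but $p_4\notin I(p_2,p_3)$, contradicting $x\in S_{2,3}$. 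With $(a)$, $(b)$, and $(c)$ all verified, Theorem~\ref{thm:Diamond} applies and yields $c_{p_1}=c_{p_4}$.
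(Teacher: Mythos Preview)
Your argument is correct and, like the paper, reduces the statement to verifying the three hypotheses of Theorem~\ref{thm:Diamond}. The route you take to that verification, however, is genuinely different. The paper proceeds by first giving an explicit structural description of $\mathrm{Min}(\mathcal{P}\setminus I(p))$ for any principal ideal $I(p)$ as a union of five concrete sets $M_1^1,M_1^2,M_2^1,M_2^2,M_3$ (distinguishing minimal versus non-minimal ``nearest'' cells in the adjacent row and column, together with far-away minimal elements), specializes these formulas to $p_1,p_2,p_3,(p_2,p_3)$, and then checks $(a)$--$(c)$ by comparing the resulting set expressions term by term. Your approach sidesteps that explicit decomposition entirely: once the cover classification pins down the diamond as $(i,j),(i{+}1,j),(i,j{+}1),(i{+}1,j{+}1)$, you leverage the lattice identities $I(p_2)\cap I(p_3)=I(p_1)$ and $I(p_2)\cup I(p_3)=I(p_2,p_3)$ together with the one-row/one-column form of the strips $I(p_2)\setminus I(p_1)$ and $I(p_3)\setminus I(p_1)$ to deduce $(a)$--$(c)$ by short element-chasing arguments. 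This is cleaner and makes transparent why the hypotheses hold; the paper's version, by contrast, carries more bookkeeping but yields a reusable description of $\mathrm{Min}(\mathcal{P}\setminus I(p))$ along the way. One small remark: your parenthetical about the case $p_2=p_3$ is a bit opaque---the theorem as stated (and as the paper uses it) tacitly assumes $p_2\neq p_3$, so it is fine simply to note this and proceed.
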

\begin{proof}
    Identify elements of $\mathcal{P}$ with the row-column coordinates of their cells in the diagram representation. Recall that rows are numbered from top to bottom and columns from left to right. Let $M$ denote the collection of minimal elements of $\mathcal{P}$. Then for $p=(i,j)\in\mathcal{P}$, set $$M^1_1=\{(i^*,j+1)\in\mathcal{P}\backslash M~|~i^*=\min\{k~|~(k,j+1)\in\mathcal{P},~k\le i\}\},$$ $$M^2_1=\{(i^*,j+1)\in M~|~i^*=\min\{k~|~(k,j+1)\in\mathcal{P},~k\le i\}\},$$ $$M_2^1=\{(i+1,j^*)\in\mathcal{P}\backslash M~|~j^*=\min\{k~|~(i+1,k)\in\mathcal{P},~k\le j\}\},$$ $$M_2^2=\{(i+1,j^*)\in M~|~j^*=\min\{k~|~(i+1,k)\in\mathcal{P},~k\le j\}\},$$ and $$M_3=\{(k,\ell)\in M~|~k>i+1\}\cup\{(k,\ell)\in M~|~\ell>j+1\}.$$ Note that for $i=1,2$, exactly one of the following holds:
    \begin{itemize}
        \item $|M_i^1|=|M_i^2|=0$,
        \item $|M_i^1|=1$ and $|M_i^2|=0$, or 
        \item $|M_i^1|=0$ and $|M_i^2|=1$.
    \end{itemize} 
    We claim that $$\mathrm{Min}(\mathcal{P}\backslash I(p))=M_1^1\cup M_1^2\cup M_2^1\cup M_1^2\cup M_3.$$ Note that all elements of $\mathrm{Min}(\mathcal{P}\backslash I(p))$ must be of the form $(k,\ell)$ for $k>i$ or $\ell>j$. Now, the fact that $$\mathrm{Min}(\mathcal{P}\backslash I(p))\supseteq M_1^1\cup M_1^2\cup M_2^1\cup M_1^2$$ should be clear. Thus, to establish the claim, we show that $$\mathrm{Min}(\mathcal{P}\backslash I(p))\backslash 
    (M_1^1\cup M_1^2\cup M_2^1\cup M_1^2)=M_3.$$ To do so, it suffices to show that if $(k,\ell)\in \mathrm{Min}(\mathcal{P}\backslash I(p))$ for $k>i+1$ or $\ell>j+1$, then $(k,\ell)\in M$. So, take $(k,\ell)\in \mathrm{Min}(\mathcal{P}\backslash I(p))$ for $k>i+1$ and assume for a contradiction that $(k,\ell)\notin M$; a similar argument applies if one instead assumes that $\ell>j+1$. Since $(k,\ell)\in \mathrm{Min}(\mathcal{P}\backslash I(p))$, it follows that $(k-1,r)\notin \mathcal{P}$ for $r\le \ell$. On the other hand, since $(k,\ell)\notin M$, there exists $(s,t)\in I(p)\subseteq\mathcal{P}$ with $s<k-1$ and $t\le \ell$. Since the diagram defining $\mathcal{P}$ is assumed to be connected, it follows that there must exist $(s',\ell)\in\mathcal{P}$ with $s'<k-1$; but since $(k-1,\ell)\notin\mathcal{P}$, this contradicts our assumption that the diagram defining $\mathcal{P}$ is row convex. Therefore, $\mathrm{Min}(\mathcal{P}\backslash I(p))\backslash (M_1^1\cup M_1^2\cup M_2^1\cup M_2^2)=M_3$ so that $$\mathrm{Min}(\mathcal{P}\backslash I(p))=M_1^1\cup M_1^2\cup M_2^1\cup M_1^2\cup M_3,$$ as claimed.

    Now, for $p_1,p_2,p_3,p_4\in\mathcal{P}$ satisfying $p_1\precdot p_2\precdot p_4$ and $p_1\precdot p_3\precdot p_4$, note that there exist $i,j\in\mathbb{Z}$ such that $p_1=(i,j)$, $p_2=(i+1,j)$, $p_3=(i,j+1)$, and $p_4=(i+1,j+1)$. Letting $S_{2,3}=\mathrm{Min}(\mathcal{P}\backslash I(p_2,p_3))$ and $S_{i}=\mathrm{Min}(\mathcal{P}\backslash I(p_i))$ for $i\in[3]$, using the claim established above we find that 
    \begin{align*}
        S_1=\{(i^*,j+1)\in&\mathcal{P}\backslash M~|~i^*=\min\{k~|~(k,j+1)\in\mathcal{P},~k\le i\}\}\cup \{(k,\ell)\in M~|~k>i+1\}\\
        &\cup\{(k,\ell)\in M~|~\ell>j+1\}\cup\{(i^*,j+1)\in M~|~i^*=\min\{k~|~(k,j+1)\in\mathcal{P},~k\le i\}\}\\ 
        &\cup\{(i+1,j^*)\in\mathcal{P}\backslash M~|~j^*=\min\{k~|~(i+1,k)\in\mathcal{P},~k\le j\}\}\\
        &\cup\{(i+1,j^*)\in M~|~j^*=\min\{k~|~(i+1,k)\in\mathcal{P},~k\le j\}\},
    \end{align*}
    \begin{align*}
        S_2=\{(i^*,j+1)\in&\mathcal{P}\backslash M~|~i^*=\min\{k~|~(k,j+1)\in\mathcal{P},~k\le i+1\}\}\cup \{(k,\ell)\in M~|~k>i+2\}\\
        &\cup\{(k,\ell)\in M~|~\ell>j+1\}\cup\{(i^*,j+1)\in M~|~i^*=\min\{k~|~(k,j+1)\in\mathcal{P},~k\le i+1\}\}\\ 
        &\cup\{(i+2,j^*)\in\mathcal{P}\backslash M~|~j^*=\min\{k~|~(i+2,k)\in\mathcal{P},~k\le j\}\}\\
        &\cup\{(i+2,j^*)\in M~|~j^*=\min\{k~|~(i+2,k)\in\mathcal{P},~k\le j\}\},
    \end{align*}
   \begin{align*}
        S_3=\{(i^*,j+2)\in&\mathcal{P}\backslash M~|~i^*=\min\{k~|~(k,j+2)\in\mathcal{P},~k\le i\}\}\cup \{(k,\ell)\in M~|~k>i+1\}\\
        &\cup\{(k,\ell)\in M~|~\ell>j+2\}\cup\{(i^*,j+2)\in M~|~i^*=\min\{k~|~(k,j+2)\in\mathcal{P},~k\le i\}\}\\ 
        &\cup\{(i+1,j^*)\in\mathcal{P}\backslash M~|~j^*=\min\{k~|~(i+1,k)\in\mathcal{P},~k\le j+1\}\}\\
        &\cup\{(i+1,j^*)\in M~|~j^*=\min\{k~|~(i+1,k)\in\mathcal{P},~k\le j+1\}\},
    \end{align*}
    and
    \begin{align*}
        S_{2,3}=\{(i^*,j+2)\in&\mathcal{P}\backslash M~|~i^*=\min\{k~|~(k,j+2)\in\mathcal{P},~k\le i\}\}\cup \{(k,\ell)\in M~|~k>i+2\}\\
        &\cup\{(k,\ell)\in M~|~\ell>j+2\}\cup\{(i^*,j+2)\in M~|~i^*=\min\{k~|~(k,j+2)\in\mathcal{P},~k\le i\}\}\\ 
        &\cup\{(i+2,j^*)\in\mathcal{P}\backslash M~|~j^*=\min\{k~|~(i+2,k)\in\mathcal{P},~k\le j\}\}\\
        &\cup\{(i+2,j^*)\in M~|~j^*=\min\{k~|~(i+2,k)\in\mathcal{P},~k\le j\}\}\cup\{p_4=(i+1,j+1)\}.
    \end{align*}
    Consequently,
    \begin{align*}
        S_2^1=S_2\backslash S_1&=\{(i+2,j^*)\in\mathcal{P}\backslash M~|~j^*=\min\{k~|~(i+2,k)\in\mathcal{P},~k\le j\}\},
    \end{align*}
    \begin{align*}
        S_3^1=S_3\backslash S_1&=\{(i^*,j+2)\in\mathcal{P}\backslash M~|~j^*=\min\{k~|~(k,j+2)\in\mathcal{P},~k\le i\}\},
    \end{align*}
    and
    \begin{align*}
        S_2\cap S_3=S_1\cap S_{2,3}=\{(k,\ell)\in M~|~k>i+1\}\cup\{(k,\ell)\in M~|~\ell>j+1\}.
    \end{align*}
    It is straightforward to verify that the collections above satisfy the hypotheses of Theorem~\ref{thm:Diamond}. Consequently, applying Theorem~\ref{thm:Diamond}, $c_{p_4}=c_{p_1}$, as desired.
\end{proof}

Along with Theorem~\ref{thm:diamond}, we also require the following additional lemmas concerning further restrictions on elements of $I_T(\mathcal{P})$.

\begin{lemma}\label{lem:pchain}
    Let $\mathcal{P}$ be a poset, $M=\mathrm{Min}(\mathcal{P})$, and $$f=c+\sum_{p\in\mathcal{P}}c_p\mathcal{T}_p\in I_T(\mathcal{P}).$$ Then $c=-\sum_{p\in M}c_{p}$.
\end{lemma}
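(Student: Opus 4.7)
The plan is to exploit the fact that an element $f \in I_T(\mathcal{P})$ has two simultaneous descriptions, and to extract the claimed identity by evaluating $f$ on a single distinguished order ideal. By definition, $I_T(\mathcal{P}) \subseteq I_H(\mathcal{P}) \subseteq \mathrm{Span}_{\mathbb{R}}(\mathbbm{1}_p \mid p \in \mathcal{P})$, so $f$ may be written as $f = \sum_{p \in \mathcal{P}} a_p \mathbbm{1}_p$ for some coefficients $a_p \in \mathbb{R}$. Simultaneously, $f = c + \sum_{p \in \mathcal{P}} c_p \mathcal{T}_p$ by hypothesis. The strategy is to pick an order ideal $I$ on which the $\mathbbm{1}_p$ side evaluates trivially while the $\mathcal{T}_p$ side isolates exactly the coefficients indexed by $M$.

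The natural choice is $I = \emptyset \in \mathcal{J}(\mathcal{P})$. On one hand, $\mathbbm{1}_p(\emptyset) = 0$ for every $p \in \mathcal{P}$, hence the indicator representation gives $f(\emptyset) = 0$. On the other hand, I compute $\mathcal{T}_p(\emptyset)$ directly from the definition: $\emptyset$ has no maximal elements, so $\mathcal{T}_p^-(\emptyset) = 0$; and $\mathrm{Min}(\mathcal{P} \setminus \emptyset) = \mathrm{Min}(\mathcal{P}) = M$, so $\mathcal{T}_p^+(\emptyset) = 1$ exactly when $p \in M$ and $0$ otherwise. Therefore $\mathcal{T}_p(\emptyset) = 1$ if $p \in M$ and $\mathcal{T}_p(\emptyset) = 0$ otherwise, which yields
\[
f(\emptyset) \;=\; c + \sum_{p \in M} c_p.
\]
Equating the two evaluations produces $c + \sum_{p \in M} c_p = 0$, which is the desired identity $c = -\sum_{p \in M} c_p$.

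There is no real obstacle here: the lemma is an immediate consequence of pairing the two representations of $f$ against the empty ideal. The only point that requires a moment of care is verifying that $\mathcal{T}_p^+(\emptyset) = 1$ precisely for $p \in M$, which follows because $\emptyset \cup \{p\}$ is an order ideal if and only if $p$ has no strict predecessors in $\mathcal{P}$, i.e., $p$ is minimal.
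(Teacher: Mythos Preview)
Your proof is correct and follows exactly the same approach as the paper: evaluate $f$ at the empty order ideal, use the indicator-function representation to see that $f(\emptyset)=0$, and use the toggleability representation to see that $f(\emptyset)=c+\sum_{p\in M}c_p$. The paper's version is terser, but the argument is identical.
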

\begin{proof}
    Since $f=c+\sum_{p\in\mathcal{P}}c_p\mathcal{T}_p$, we find that $f(\emptyset)=c+\sum_{p\in M}c_{p}$. On the other hand, 
    since $f\in \text{Span}_{\mathbb{R}}(\mathbbm{1}_p~|~p\in\mathcal{P})$, it follows that $f(\emptyset)=0$. Consequently, $c+\sum_{p\in M}c_{p}=f(\emptyset)=0$; that is, $c=-\sum_{p\in M}c_{p}$, as desired.
\end{proof}

\begin{lemma}\label{lem:root_zero} 
Let $\mathcal{P}$ be a poset defined by a connected diagram with $p,q_1,q_2\in\mathcal{P}$ where $$\{r\in\mathcal{P}~|~r\preceq q_1\}\cap \{r\in\mathcal{P}~|~r\preceq q_2\}=\emptyset$$ and $p$ is the unique element in $\mathcal{P}$ satisfying $q_1,q_2\precdot p$. Then for $$f = c + \sum_{p\in P}c_{p}\mathcal{T}_{p} \in I_T(P),$$ we have $c_{p} = 0$.
\end{lemma}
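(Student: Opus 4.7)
The plan is to evaluate $f$ at the four order ideals $\emptyset$, $I(q_1)$, $I(q_2)$, and $I(q_1,q_2)$, and compare the alternating combination
$$\delta = f(I(q_1,q_2)) - f(I(q_1)) - f(I(q_2)) + f(\emptyset)$$
computed in two ways: via the representation $f = \sum_r a_r \mathbbm{1}_r$ (to show $\delta = 0$) and via $f = c + \sum_r c_r \mathcal{T}_r$ (to identify $\delta$ with $c_p$). This is a direct analogue of the double-counting strategy used in Theorem~\ref{thm:Diamond}.

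The indicator side is immediate: the disjointness hypothesis gives the disjoint union $I(q_1,q_2) = I(q_1)\sqcup I(q_2)$, so for each $r \in \mathcal{P}$ a case check on whether $r\in I(q_1)$, $r\in I(q_2)$, or $r \notin I(q_1,q_2)$ shows $\mathbbm{1}_r(I(q_1,q_2)) - \mathbbm{1}_r(I(q_1)) - \mathbbm{1}_r(I(q_2)) + \mathbbm{1}_r(\emptyset) = 0$, yielding $\delta = 0$. On the toggleability side, after the constant $c$ and the coefficients $c_{q_1}, c_{q_2}$ (from $q_i \in \mathrm{Max}(I(q_i))$ and $\mathrm{Max}(I(q_1,q_2))$) cancel, one is left with
$$\delta = \sum_{r\in A_{12}} c_r - \sum_{r\in A_1} c_r - \sum_{r\in A_2} c_r + \sum_{r\in A_0} c_r,$$
where $A_0 = \mathrm{Min}(\mathcal{P})$, $A_i = \mathrm{Min}(\mathcal{P}\setminus I(q_i))$ for $i=1,2$, and $A_{12} = \mathrm{Min}(\mathcal{P}\setminus I(q_1,q_2))$. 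The task then reduces to showing that only $r = p$ contributes, with signed multiplicity $+1$.

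The routine contributions are handled as follows. If $r\in I(q_i)$, every strict predecessor of $r$ is also in $I(q_i)$, hence disjoint from $I(q_{3-i})$; this forces membership in $A_{3-i}$ to agree with membership in $A_0$ (both being equivalent to $r$ being minimal in $\mathcal{P}$), and the signed contributions cancel. For $r = p$, the cover predecessors of $p$ are exactly $q_1, q_2 \in I(q_1,q_2)$, so $p\in A_{12}$ while $p\notin A_0\cup A_1\cup A_2$, producing the desired $+c_p$.

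The main obstacle is controlling $r\in\mathcal{P}\setminus I(q_1,q_2)$ with $r\neq p$. Minimal such $r$ lie in all four sets $A_j$ and contribute $0$. If every cover predecessor of $r$ lies in $I(q_1)$, then $r\in A_1\cap A_{12}$ but $r\notin A_2\cup A_0$, so contributions again cancel; the case for $I(q_2)$ is symmetric. The delicate remaining scenario is $r\in A_{12}\setminus(A_1\cup A_2)$: here $r$ must have two distinct cover predecessors $s_1, s_2$ with $s_1\in I(q_2)\setminus I(q_1)$ and $s_2\in I(q_1)\setminus I(q_2)$, and it is the diagram hypothesis that rules it out. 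Writing $r = (i,j)$, its two cover predecessors must be $(i-1,j)$ and $(i,j-1)$, and writing $p = (i',j')$ with $\{q_1,q_2\} = \{(i'-1,j'),(i',j'-1)\}$, the comparisons $s_1\preceq q_2$ and $s_2\preceq q_1$ translate into the coordinate inequalities $i < i'$ and $j < j'$. These in turn force $r \preceq q_1$, so $r \in I(q_1) \subseteq I(q_1,q_2)$, contradicting $r\notin I(q_1,q_2)$. Thus only $r = p$ contributes, giving $\delta = c_p$; combining with $\delta = 0$ from the indicator side yields $c_p = 0$.
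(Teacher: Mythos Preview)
Your proof is correct and follows exactly the paper's approach: compute $f(I(q_1,q_2)) - f(I(q_1)) - f(I(q_2)) + f(\emptyset)$ both via the $\mathbbm{1}_r$ expansion (yielding $0$) and via the $\mathcal{T}_r$ expansion (yielding $c_p$). Your element-by-element case analysis---in particular the coordinate argument ruling out $r\in A_{12}\setminus(A_1\cup A_2)$ with $r\neq p$---makes explicit a step in the decomposition of $\mathrm{Min}(\mathcal{P}\setminus I(q_1,q_2))$ that the paper's proof asserts directly without justification.
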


In Example~\ref{ex:root_zero}, we provide a sample application of Theorem~\ref{lem:root_zero}.

\begin{exmp}\label{ex:root_zero}
Let $\mathcal{P}$ be the poset defined for the diagram $D$ illustrated in Figure~\ref{fig:root_zero}. The elements $p,q_1,q_2\in\mathcal{P}$ labeled in $D$ satisfy the hypotheses of Lemma~\ref{lem:root_zero}. Thus, applying Lemma~\ref{lem:root_zero}, if $f=c+\sum_{p\in\mathcal{P}}c_p\mathcal{T}_p\in I_T(\mathcal{P})$, then $c_{p}=0$.

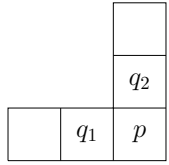
\begin{figure}[H]
    \centering
    $$\scalebox{0.7}{\begin{tikzpicture}
        \draw (0,0)--(3,0)--(3,3)--(2,3)--(2,1)--(0,1)--(0,0);
        \draw (1,0)--(1,1);
        \draw (2,0)--(2,1)--(3,1);
        \draw (2,2)--(3,2);
        \node at (2.5,0.5) {\Large $p$};
        \node at (1.5,0.5) {\Large $q_1$};
        \node at (2.5,1.5) {\Large $q_2$};
    \end{tikzpicture}}$$
    \caption{Example elements described in Lemma~\ref{lem:root_zero}}
    \label{fig:root_zero}
\end{figure}
\end{exmp}

\begin{proof}[Proof of Lemma \ref{lem:root_zero}]
Assume that $f=\sum_{p\in\mathcal{P}}a_p\mathbbm{1}_p$. Then, letting $$C=f(I(q_1, q_2)) - f(I(q_1))) - f(I(q_2)) + f(\emptyset),$$ we have that $$C=\left(\sum_{q_1\succeq r\in\mathcal{P}}a_r+\sum_{q_2\succeq r\in\mathcal{P}}a_r\right)-\sum_{q_1\succeq r\in\mathcal{P}}a_r-\sum_{q_2\succeq r\in\mathcal{P}}a_r+0=0.$$
Now, let $M=\mathrm{Min}(\mathcal{P})$, $L_1=\mathrm{Min}(\mathcal{P}\backslash I(q_1))\backslash M$, and $L_2=\mathrm{Min}(\mathcal{P}\backslash I(q_2))\backslash M$. By assumption, we have that $L_1\cap L_2=\emptyset$. Thus,
\begin{align*}
    C&=\left(c_p - c_{q_1} - c_{q_2} + \sum_{\substack{m\in M \\ m\nprec q_1,q_2}}c_m+\sum_{r\in L_1}c_r+\sum_{r\in L_2}c_r\right) - \left(-c_{q_1} + \sum_{\substack{m\in M \\ m\npreceq q_1,q_2}}c_m+\sum_{q_2\succeq m\in M}c_m+\sum_{r\in L_1}c_r\right)\\
    &~~~~~~~~- \left(-c_{q_2} + \sum_{\substack{m\in M \\ m\npreceq q_1,q_2}}c_m+\sum_{q_1\succeq m\in M}c_m+\sum_{r\in L_2}c_r\right)+ \left(\sum_{\substack{m\in M \\ m\npreceq q_1,q_2}}c_m+\sum_{q_1\succeq m\in M}c_m+\sum_{q_2\succeq m\in M}c_m\right)\\
    & = c_p. 
\end{align*}
Consequently, $c_p=C=0$; that is, $c_p=0$, as desired.
\end{proof}

Applying the results above, we obtain upper bounds on $\dim I_T(\mathcal{P})$ for our four primary types of posets.

\begin{proposition}\label{prop:UB}~
    \begin{enumerate}
    \item[$(a)$] If $\mathcal{P}=[m]\times [n]$ for $m,n\ge 2$, then $\dim I_T(\mathcal{P})\le \mathrm{rk}(\mathcal{P})+1= n+m-1$.
    \item[$(b)$] If $\mathcal{P}=([n]\times[n])\backslash \mathfrak{S}_2$ for $n\ge 2$, then $\dim I_T(\mathcal{P})\le \mathrm{rk}(\mathcal{P})+1= 2n-1$.
    \item[$(c)$] If $\mathcal{P}=\Phi^+(A_n)$ for $n\ge 2$, then $\dim I_T(\mathcal{P})\le \mathrm{rk}(\mathcal{P})+1= n$.
    \item[$(d)$] If $\mathcal{P}=\Phi^+(B_n)$ for $n\ge 2$, then $\dim I_T(\mathcal{P})\le \mathrm{rk}(\mathcal{P})+1= 2n-1$.
\end{enumerate}
\end{proposition}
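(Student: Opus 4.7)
The plan is to combine Theorem~\ref{thm:diamond} with Lemmas~\ref{lem:pchain} and~\ref{lem:root_zero} to upper bound the number of free coefficients in any expression $f = c + \sum_{p\in\mathcal{P}} c_p\mathcal{T}_p \in I_T(\mathcal{P})$; since every element of $I_T(\mathcal{P})$ arises in this way, any upper bound on the space of valid tuples $(c,(c_p))$ bounds $\dim I_T(\mathcal{P})$. Each of the four diagrams is connected, row convex, and column convex, so Theorem~\ref{thm:diamond} applies to every two-by-two block $(i,j),(i+1,j),(i,j+1),(i+1,j+1)$ contained in the diagram and forces $c_{(i,j)}=c_{(i+1,j+1)}$. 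Iterating, the coefficients are constant on each maximal run of consecutive cells along a diagonal $\{(i,j):j-i=k\}$. Lemma~\ref{lem:pchain} then determines $c$ as $-\sum_{p\in\mathrm{Min}(\mathcal{P})}c_p$, so $c$ contributes no additional freedom.

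With these relations in hand, I would count the resulting diagonal classes case by case. For $[m]\times[n]$, every consecutive pair along every diagonal is linked, giving $m+n-1$ classes. For $([n]\times[n])\setminus\mathfrak{S}_2$, no two-by-two block spans the main diagonal, so each of the $n$ cells $(i,i)$ is its own class while each of the $n-1$ superdiagonals is a single class, totalling $2n-1$. For $\Phi^+(A_n)$, Theorem~\ref{thm:diamond} first collapses each of the $2n-1$ diagonals to one class; then, for each $p=(i,j)$ with $i+j=n+2$, the covers $q_1=(i-1,j)$ and $q_2=(i,j-1)$ have disjoint down-sets (since the would-be common element $(i-1,j-1)$ satisfies $a+b=n<n+1$), and $p$ is their unique common upper cover, so Lemma~\ref{lem:root_zero} yields $c_p=0$ and kills the $n-1$ diagonals of the same parity as $n$, leaving exactly $n$ classes. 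For $\Phi^+(B_n)$, the main diagonal carries $n$ independent coefficients and the $2n-2$ subdiagonals each collapse to one class; the analogous use of Lemma~\ref{lem:root_zero} on cells with $i+j=2n+1$ zeroes out $n-1$ of the subdiagonals, leaving $n+(n-1)=2n-1$.

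The main subtlety will be verifying the hypotheses of Lemma~\ref{lem:root_zero} in the two root poset cases. Disjointness of down-sets reduces to the elementary inequality that the would-be common ancestor $(i-1,j-1)$ fails the poset's defining sum condition. Uniqueness of $p$ as a common upper cover is automatic, since the upper covers of $(i-1,j)$ in a row- and column-convex diagram lie in $\{(i,j),(i-1,j+1)\}$, those of $(i,j-1)$ in $\{(i,j),(i+1,j-1)\}$, and these two sets meet only in $\{(i,j)\}$. After these checks, the desired bound $\dim I_T(\mathcal{P})\le\mathrm{rk}(\mathcal{P})+1$ follows directly from the diagonal counts above.
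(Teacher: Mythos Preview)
Your proposal is correct and follows essentially the same approach as the paper's proof: apply Theorem~\ref{thm:diamond} to collapse coefficients along diagonals, use Lemma~\ref{lem:pchain} to eliminate the constant, and in the root-poset cases invoke Lemma~\ref{lem:root_zero} on the cells just above the minimal rank to kill the appropriate diagonals. Your hypothesis checks for Lemma~\ref{lem:root_zero} and your diagonal counts match the paper's, and your explicit observation that no $2\times2$ block straddles the main diagonal in cases~(b) and~(d) makes transparent why those diagonal cells remain independent.
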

\begin{proof} In what follows, for each choice of $\mathcal{P}$, we let $$f=c+\sum_{(i,j)\in\mathcal{P}}c_{i,j}\mathcal{T}_{(i,j)}\in I_T(\mathcal{P}).$$
    \begin{enumerate}
        \item[$(a)$] Take $\mathcal{P}=[m]\times [n]$ for $m,n\ge 2$. Note that, applying Lemma~\ref{lem:pchain}, it follows that $c=-c_{1,1}$. In addition, we claim that $c_{i,j}=c_{k,\ell}$ for $(i,j),(k,\ell)\in\mathcal{P}$ satisfying $i-j=k-\ell$. To establish the claim, it suffices to show that if $(i,j),(i+1,j+1)\in\mathcal{P}$, then $c_{i,j}=c_{i+1,j+1}$. To this end, note that if $(i,j),(i+1,j+1)\in\mathcal{P}$, then $(i+1,j),(i,j+1)\in\mathcal{P}$ with $(i,j)\precdot (i+1,j)\precdot (i+1,j+1)$ and $(i,j)\precdot (i,j+1)\precdot (i+1,j+1)$. Thus, since $\mathcal{P}$ can be defined by a connected diagram which is both row and column convex (being an $n\times m$ rectangle), applying Theorem~\ref{thm:diamond}, we find that $c_{i,j}=c_{i+1,j+1}$ and the claim follows. Thus, we have shown that 
        \begin{itemize}
            \item $c=-c_{1,1}$ and
            \item $c_{i,j}=c_{k,\ell}$ for $(i,j),(k,\ell)\in\mathcal{P}$ satisfying $i-j=k-\ell$.
        \end{itemize}
        Consequently, it follows that $\dim I_T(\mathcal{P})$ is at most $$|\{k~|~i-j=k~\text{and}~(i,j)\in[m]\times[n]\}|=n+m-1=\mathrm{rk}([m]\times[n])+1,$$ as desired.

        \item[$(b)$] Take $\mathcal{P}=([n]\times[n])\backslash \mathfrak{S}_2$ for $n\ge 2$. Arguing exactly as in the proof of $\mathcal{P}=[m]\times [n]$ for $m,n\ge 2$, we find that 
        \begin{itemize}
            \item $c=-c_{1,1}$ and
            \item $c_{i,j}=c_{k,\ell}$ for $(i,j),(k,\ell)\in\mathcal{P}$ satisfying $i-j=k-\ell$.
        \end{itemize}
        Consequently, it follows that $\dim I_T(([n]\times [n])\backslash \mathfrak{S}_2)$ is at most $$|\{k\in\mathbb{Z}~|~k\in (0,n-1]\}|+n=2n-1=\mathrm{rk}(\mathcal{P})+1,$$ as desired.

        \item[$(c)$] Take $\mathcal{P}=\Phi^+(A_n)$ for $n\ge 2$. Applying Lemma~\ref{lem:pchain}, we find that $c = - \sum_{i=0}^{n-1} c_{1+i, n-i}$. Next, note that if $(i,j)\in\mathcal{P}$ satisfies $i+j=n+2$, then 
    \begin{itemize}
        \item $(i-1,j),(i,j-1)\in\mathcal{P}$ are minimal elements of $\mathcal{P}$ and
        \item $(i,j)$ is the unique element of $\mathcal{P}$ satisfying $(i-1,j),(i,j-1)\precdot (i,j)$. 
    \end{itemize}
    Thus, applying Lemma~\ref{lem:root_zero}, it follows that $c_{i,j}=0$ for $(i,j)\in\mathcal{P}$ satisfying $i+j = n+2$. Finally, utilizing Theorem~\ref{thm:diamond} as in the case of $\mathcal{P}=[m]\times [n]$, we find that $c_{i,j} = c_{k,\ell}$ for $(i,j),(k,\ell)\in\mathcal{P}$ satisfying $i-j = k-\ell$; that is, all together, we have that 
    \begin{itemize}
    \item $c = - \sum_{i=0}^{n-1} c_{1+i, n-i}$,
    \item $c_{i,j} =0$ for $(i,j)\in\mathcal{P}$ satisfying $i+j = n+2$, and
    \item $c_{i,j} = c_{k,\ell}$ for $(i,j),(k,\ell)\in\mathcal{P}$ satisfying $i-j = k-\ell$.
\end{itemize}
Consequently, it follows that $\dim I_T(\rt{A_n})$ is at most $$|\{(i,j)\in \rt{A_n}~|~i+j=n+1\}|=n=\mathrm{rk}(\mathcal{P})+1,$$ as desired.

\item[$(d)$] Take $\mathcal{P}=\Phi^+(B_n)$ for $n\ge 2$. Applying Lemma~\ref{lem:pchain}, we find that $c = - \sum_{i=0}^{n-1} c_{2n-1-i,1+i}$. Next, note that if $(i,j)\in\mathcal{P}$ satisfies $i+j=2n+1$, then 
    \begin{itemize}
        \item $(i-1,j),(i,j-1)\in\mathcal{P}$ are minimal elements of $\mathcal{P}$ and
        \item $(i,j)$ is the unique element of $\mathcal{P}$ satisfying $(i-1,j),(i,j-1)\precdot (i,j)$. 
    \end{itemize}
    Thus, applying Lemma~\ref{lem:root_zero}, it follows that $c_{i,j}=0$ for $(i,j)\in\mathcal{P}$ satisfying $i+j=2n+1$. Finally, utilizing Theorem~\ref{thm:diamond} as in the case of $\mathcal{P}=[m]\times [n]$, we find that $c_{i,j} = c_{k,\ell}$ for $(i,j),(k,\ell)\in\mathcal{P}$ satisfying $i-j = h-k>0$; that is, all together we have that
    \begin{itemize}
        \item $c = - \sum_{i=0}^{n-1} c_{2n-1-i,1+i}$,
        \item $c_{i,j} = 0$ for $(i,j)\in\mathcal{P}$ satisfying $i+j=2n+1$, and
        \item $c_{i,j} = c_{k,\ell}$ for $(i,j),(k,\ell)\in\mathcal{P}$ satisfying $i-j = h-k>0$.
    \end{itemize}
    Consequently, it follows that $\dim I_T(\rt{B_n})$ is at most $$|\{(i,j)\in \rt{B_n}~|~i+j=2n\}\cup \{k\in\mathbb{Z}~|~(k,k)\in \rt{B_n},~k>n\}|=n+(n-1)=2n-1=\mathrm{rk}(\mathcal{P})+1,$$ as desired.   \qedhere     
\end{enumerate}
\end{proof}

\subsection{Lower bound: $\dim I_T(\mathcal{P})\ge \mathrm{rk}(\mathcal{P})+1$}\label{sec:LB}

In this subsection, we finish the proof of Theorem~\ref{thm:main1} by showing that $\dim I_T(\mathcal{P})\ge \mathrm{rk}(\mathcal{P})+1$ for the posets $\mathcal{P}$ of Theorem~\ref{thm:main1}. To do so, first we identify certain elements of $I_T(\mathcal{P})$ for restricted posets $\mathcal{P}$. Then, focusing on posets $\mathcal{P}$ in Theorem~\ref{thm:main1}, we are able to identify collections of $\mathrm{rk}(\mathcal{P})+1$ linearly independent elements belonging to $I_T(\mathcal{P})$.

\begin{lemma}\label{lem:ucovs}
    Let $\mathcal{P}$ be a poset and $p\in \mathcal{P}$. If there exists unique $(r,q)\in\mathcal{P}^2$ for which $r\precdot p\precdot q$, then $\mathcal{T}_p\in I_T(\mathcal{P})$.
\end{lemma}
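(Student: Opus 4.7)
The plan is to exhibit $\mathcal{T}_p$ directly as a linear combination of the order ideal indicator functions $\mathbbm{1}_x$. Since $\mathcal{T}_p$ is trivially a linear combination of toggleability statistics (itself, with constant term zero) and is $0$-mesic under rowmotion by Striker's lemma, membership in $I_T(\mathcal{P})$ will follow as soon as we display $\mathcal{T}_p \in \operatorname{Span}_{\mathbb{R}}(\mathbbm{1}_x \mid x \in \mathcal{P})$.

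First I would unpack $\mathcal{T}_p^+$ under the uniqueness hypothesis. Because $I$ is an order ideal, the condition $p \in \operatorname{Min}(\mathcal{P}\setminus I)$ is equivalent to requiring $p \notin I$ together with $r' \in I$ for every $r'$ with $r' \precdot p$. The hypothesis says $r$ is the only such element, so $\mathcal{T}_p^+(I) = 1$ precisely when $r \in I$ and $p \notin I$, and $\mathcal{T}_p^+(I) = 0$ otherwise. Since the order-ideal property rules out the case $r \notin I$, $p \in I$, this gives the clean identity $\mathcal{T}_p^+ = \mathbbm{1}_r - \mathbbm{1}_p$.

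The analogous analysis for $\mathcal{T}_p^-$ runs dually: $p \in \operatorname{Max}(I)$ iff $p \in I$ and every $q'$ with $p \precdot q'$ satisfies $q' \notin I$. By the uniqueness of $q$, this collapses to $p \in I$ and $q \notin I$, giving $\mathcal{T}_p^- = \mathbbm{1}_p - \mathbbm{1}_q$. Subtracting yields
\[
\mathcal{T}_p \;=\; \mathcal{T}_p^+ - \mathcal{T}_p^- \;=\; \mathbbm{1}_r - 2\,\mathbbm{1}_p + \mathbbm{1}_q,
\]
placing $\mathcal{T}_p$ in $\operatorname{Span}_{\mathbb{R}}(\mathbbm{1}_x \mid x \in \mathcal{P})$ and hence in $I_T(\mathcal{P})$.

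There is no real obstacle here; the only thing to be careful about is using the order-ideal property correctly when reducing $\mathcal{T}_p^{\pm}$, to confirm that the uniqueness of the cover relation $r \precdot p$ (respectively $p \precdot q$) really does suffice to characterize membership in $\operatorname{Min}(\mathcal{P}\setminus I)$ (respectively $\operatorname{Max}(I)$) via a single indicator comparison.
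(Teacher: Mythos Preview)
Your proof is correct and follows exactly the same approach as the paper, which simply asserts that it is straightforward to verify $\mathcal{T}_p=\mathbbm{1}_r-2\mathbbm{1}_p+\mathbbm{1}_q$. You have just spelled out that verification in detail.
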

\begin{proof}
    It is straightforward to verify that $\mathcal{T}_p=\mathbbm{1}_r-2\mathbbm{1}_p+\mathbbm{1}_q$.
\end{proof}

In Example~\ref{ex:ucovs}, we apply Lemma~\ref{lem:ucovs} to the poset $\rt{B_3}$.

\begin{exmp}\label{ex:ucovs}
    Taking $\mathcal{P}=\rt{B_3}$, illustrated in Figure~\ref{fig:theposets} (d), the element $p=(4,4)$ satisfies the hypotheses of Lemma~\ref{lem:ucovs} with $r=(4,3)$ and $q=(5,4)$. Consequently, applying Lemma~\ref{lem:ucovs}, one finds that $\mathcal{T}_{(4,4)}\in I_T(\mathcal{P})$.
\end{exmp}

\begin{proposition}\label{prop:pchain2}
    Let $\mathcal{P}$ be a poset defined by a connected diagram which is both row and column convex, $M=\mathrm{Min}(\mathcal{P})$, and $$\mathcal{P}_\ell=\{(i,j)\in\mathcal{P}~|~i-j=\ell\}$$ for $\ell\in\mathbb{Z}$. Given $k\in\mathbb{Z}$, suppose that
    \begin{itemize}
        \item[$(i)$] $\mathcal{P}_{k},\mathcal{P}_{k\pm1}\neq \emptyset$ and
        \item[$(ii)$] $(i,j),(i+1,j+1)\in\mathcal{P}_k$ if and only if $(i,j+1)\in\mathcal{P}_{k-1}$ and $(i+1,j)\in\mathcal{P}_{k+1}$.
    \end{itemize}
    Then
    $$f_k=\begin{cases}
        \displaystyle\sum_{\substack{(i,j)\in\mathcal{P}\\ i-j=k}}\mathcal{T}_{(i,j)}, & \mathcal{P}_k\cap M=\emptyset \\
        &\\
        -1+\displaystyle\sum_{\substack{(i,j)\in\mathcal{P}\\ i-j=k}}\mathcal{T}_{(i,j)}, & \mathcal{P}_k\cap M\neq\emptyset
    \end{cases}$$
    belongs to $I_T(\mathcal{P})$.
\end{proposition}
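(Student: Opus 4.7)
The plan is to write $f_k$ explicitly as a linear combination of order ideal indicator functions $\mathbbm{1}_p$, which immediately places $f_k$ inside $I_T(\mathcal{P})$. Specifically, I propose the coefficients
\[
a_p = \begin{cases} -2 & \text{if } p \in \mathcal{P}_k, \\ 1 & \text{if } p \in \mathcal{P}_{k-1}\cup\mathcal{P}_{k+1} \text{ and } p \text{ has a cover relation with some cell of } \mathcal{P}_k, \\ 0 & \text{otherwise,} \end{cases}
\]
and intend to verify, by induction on $|I|$, that $f_k(I) = \sum_{p \in I} a_p$ for every $I\in\mathcal{J}(\mathcal{P})$. Since any element of $\mathrm{Span}_{\mathbb{R}}(\mathbbm{1}_p\,|\,p\in\mathcal{P})$ vanishes on $\emptyset$, and $f_k$ is by construction a constant plus a linear combination of toggleability statistics, establishing this identity suffices.

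For the base case $I=\emptyset$, the target $f_k(\emptyset)=0$ follows once one observes that any two elements of $\mathcal{P}_k$ share the value $i-j=k$, hence are comparable in the southeast order. Thus $\mathcal{P}_k$ is a chain in $\mathcal{P}$ containing at most one element of $M=\mathrm{Min}(\mathcal{P})$. Since $\mathcal{T}_p(\emptyset)=\mathbbm{1}[p\in M]$, the sum $\sum_{p\in\mathcal{P}_k}\mathcal{T}_p(\emptyset)$ equals $|\mathcal{P}_k\cap M|\in\{0,1\}$, which is canceled exactly by the $-1$ correction present in $f_k$ precisely when $\mathcal{P}_k\cap M\neq\emptyset$.

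For the inductive step, fix $p^*\in\mathrm{Min}(\mathcal{P}\setminus I)$ and study the increment $\Delta := f_k(I\cup\{p^*\})-f_k(I)$. Only the summands $\mathcal{T}_p$ with $p=p^*$ or $p$ a cover of $p^*$ in $\mathcal{P}$ change, so the argument reduces to tracking the interaction between $p^*$ and $\mathcal{P}_k$. If $p^*\in\mathcal{P}_k$, then $\mathcal{T}_{p^*}$ flips from $+1$ to $-1$, and the covers of $p^*$ (which sit on diagonals $k\pm 1$) contribute nothing; hence $\Delta=-2=a_{p^*}$. If $p^*$ lies on a diagonal $\mathcal{P}_{k'}$ with $|k'-k|\ge 2$, no summand of $f_k$ changes and $\Delta=0=a_{p^*}$. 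The case $p^*\in\mathcal{P}_{k-1}$ is symmetric to $p^*\in\mathcal{P}_{k+1}$, so the heart of the proof is the latter.

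The main obstacle is this last case. For $p^*=(i,j)\in\mathcal{P}_{k+1}$, the potential diagonal-$k$ covers are $r=(i-1,j)$ and $q=(i,j+1)$, and the opposing corner $s:=(i-1,j+1)$ sits (potentially) on diagonal $k-1$. I would split into subcases by which of $r,q$ belong to $\mathcal{P}$. When both do, condition $(ii)$ forces $s\in\mathcal{P}_{k-1}$; the changes in $\mathcal{T}_r$ and $\mathcal{T}_q$ then contribute $[s\notin I]$ and $[s\in I]$ respectively, summing to $1$ regardless of $I$. When exactly one of $r,q$ lies in $\mathcal{P}$, the contrapositive direction of condition $(ii)$ (applied to $(i',j')=(i-1,j)$) forces $s\notin\mathcal{P}$, so the surviving cover has $p^*$ as its only remaining nontrivial neighbor, producing an unconditional contribution of $+1$. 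When neither lies in $\mathcal{P}$, the contribution is $0$. In every subcase, $\Delta$ matches $a_{p^*}$, closing the induction and identifying $f_k=\sum_p a_p\mathbbm{1}_p$ as an element of $I_T(\mathcal{P})$.
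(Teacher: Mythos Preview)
Your argument is correct. Both you and the paper prove the same identity, namely that $f_k$ equals
\[
g_k \;=\; -2\sum_{p\in\mathcal{P}_k}\mathbbm{1}_p \;+\; \sum_{q\in \mathcal{Q}_k}\mathbbm{1}_q,
\qquad
\mathcal{Q}_k=\{q\in\mathcal{P}_{k-1}\cup\mathcal{P}_{k+1}\mid \exists\,p\in\mathcal{P}_k\text{ with }p\precdot q\text{ or }q\precdot p\},
\]
so the target element of $\mathrm{Span}_{\mathbb{R}}(\mathbbm{1}_p)$ is identical. The paper verifies $f_k=g_k$ by a direct eight--case check on the position of $\mathrm{Max}(I)$ relative to the chain $\mathcal{P}_k$ and its neighbours $\mathcal{Q}_k$; you instead induct on $|I|$ and track the increment $f_k(I\cup\{p^*\})-f_k(I)$, splitting on the diagonal of $p^*$ and invoking both directions of hypothesis~$(ii)$ to handle the diamond $\{r,s,p^*,q\}$. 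Your route has the advantage of being a single uniform computation rather than eight parallel ones, and it makes transparent exactly where $(ii)$ enters; the paper's case list, on the other hand, makes the structure of $\mathcal{Q}_k$ (the ``$m_0,m_n,r_i,q_i$'' picture) more visible, which is convenient when these formulas are reused in Theorem~\ref{thm:bases}. One small wording slip: when you say ``only the summands $\mathcal{T}_p$ with $p=p^*$ or $p$ a cover of $p^*$ change'' you should include $p$ covered by $p^*$ as well---indeed your own $r=(i-1,j)$ is covered by $p^*$, not a cover of it---but your subsequent analysis of $r$ and $q$ is correct regardless.
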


In Example~\ref{ex:pchain2}, we apply Proposition~\ref{prop:pchain2} to the poset $[3]\times [4]$. 

\begin{exmp}\label{ex:pchain2}
    Let $\mathcal{P}=[3]\times[4]$, illustrated in Figure~\ref{fig:theposets} (a). In the notation of Proposition~\ref{prop:pchain2}, we have $$\mathcal{P}_2=\{(3,1)\},\quad \mathcal{P}_1=\{(2,1),(3,2)\},\quad \mathcal{P}_0=\{(1,1),(2,2),(3,3)\},\quad \mathcal{P}_{-1}=\{(1,2),(2,3),(3,4)\},$$ $$\mathcal{P}_{-2}=\{(1,3),(2,4)\},\quad\text{and}\quad\mathcal{P}_{-3}=\{(1,4)\}.$$ One can check $k=1,0,-1,-2$ satisfy the hypotheses of Proposition~\ref{prop:pchain2}. Consequently, applying Proposition~\ref{prop:pchain2}, one finds that $$f_1=\mathcal{T}_{(2,1)}+\mathcal{T}_{(3,2)},~f_0=-1+\mathcal{T}_{(1,1)}+\mathcal{T}_{(2,2)}+\mathcal{T}_{(3,3)},~f_{-1}=\mathcal{T}_{(1,2)}+\mathcal{T}_{(2,3)}+\mathcal{T}_{(3,4)},~f_{-2}=\mathcal{T}_{(1,3)}+\mathcal{T}_{(2,4)}\in I_T(\mathcal{P}).$$
\end{exmp}

\begin{proof}[Proof of Proposition \ref{prop:pchain2}]
    Letting $$\mathcal{Q}_k=\{q\in \mathcal{P}_{k-1}\cup\mathcal{P}_{k+1}~|~\exists~p\in\mathcal{P}_k~\text{such that}~p\precdot q~\text{or}~q\precdot p\}$$
    and $$g_k=-2\sum_{p\in \mathcal{P}_k}\mathbbm{1}_p+\sum_{q\in \mathcal{Q}_k}\mathbbm{1}_q,$$ to establish the result we show that $f_k=g_k$. Assume that $$\mathcal{P}_k=\{p_1\prec p_2\prec\cdots\prec p_n\}.$$ Note that considering (ii), if the poset induced by $\mathcal{Q}_k\cup\mathcal{P}_k$ in $\mathcal{P}$ has a minimal (resp., maximal) element in $\mathcal{Q}_k$, then it is the unique minimal (resp., maximal) element of $\mathcal{Q}_k\cup\mathcal{P}_k$. If such a minimal (resp., maximal) element exists, then we denote it by $m_0$ (resp., $m_n$). Moreover, $$(\mathcal{Q}_k\cap\mathcal{P}_{k-1})\backslash\{m_0,m_n\}=\{r_1\prec r_2\prec\cdots\prec r_{n-1}\}$$ and $$(\mathcal{Q}_k\cap\mathcal{P}_{k+1})\backslash\{m_0,m_n\}=\{q_1\prec q_2\prec\cdots\prec q_{n-1}\}$$ where $p_i\precdot r_i,q_i\precdot q_{i+1}$ for $1\le i<n$. Now, assume that $\mathcal{P}_k\cap M=\emptyset$; the other case following via a similar argument. Note that $\mathcal{P}_k\cap M=\emptyset$ implies that $m_0\in\mathcal{P}$. Take $I\in\mathcal{J}(\mathcal{P})$ and let $S=\mathrm{Max}(\mathcal{P}_I)$, i.e., $S$ denotes the collection of maximal elements in the poset induced by $I$ in $\mathcal{P}$. There are eight cases.
    \begin{itemize}
        \item[$(1)$] If $m_0\notin S$, then, evidently, $f(I)=g(I)=0$.
        \item[$(2)$] If $m_0\in S$, then $$f(I)=\mathcal{T}_{p_1}(I)=1=\mathbbm{1}_{m_o}(I)=g(I).$$
        \item[$(3)$] If $p_i\in S$ for $1\le i<n$, then $$f(I)=\mathcal{T}_{p_i}(I)=-1=-2\mathbbm{1}_{p_i}(I)+\mathbbm{1}_{m_0}(I)=-2\mathbbm{1}_{p_i}(I)+\mathbbm{1}_{m_0}(I)+\sum_{j=1}^{i-1}(-2\mathbbm{1}_{p_j}(I)+\mathbbm{1}_{r_j}(I)+\mathbbm{1}_{q_j}(I))=g(I).$$
        \item[$(4)$] If $r_i\in S$ and $q_i\notin S$, then 
        \begin{align*}
            f(I)=0&=-2\mathbbm{1}_{p_i}(I)+\mathbbm{1}_{m_0}(I)+\mathbbm{1}_{r_i}(I)\\
            &=-2\mathbbm{1}_{p_i}(I)+\mathbbm{1}_{m_0}(I)+\mathbbm{1}_{r_i}(I)+\sum_{j=1}^{i-1}(-2\mathbbm{1}_{p_j}(I)+\mathbbm{1}_{r_j}(I)+\mathbbm{1}_{q_j}(I))\\
            &=g(I).
        \end{align*}
        \item[$(5)$] If $r_i\notin S$ and $q_i\in S$, then $f(I)=g(I)$ via a symmetric argument to that given in $(4)$.
        \item[$(6)$] If $r_i,q_i\in S$, then 
        \begin{align*}
            f(I)=\mathcal{T}_{p_{i+1}}(I)=1=\mathbbm{1}_{m_0}(I)=\mathbbm{1}_{m_0}(I)+\sum_{j=1}^i(-2\mathbbm{1}_{p_j}(I)+\mathbbm{1}_{r_j}(I)+\mathbbm{1}_{q_j}(I))=g(I).
        \end{align*}
        \item[$(7)$] If $p_n\in S$, then 
        \begin{align*}
            f(I)=\mathcal{T}_{p_n}(I)=-1&=-2\mathbbm{1}_{p_n}(I)+\mathbbm{1}_{m_0}(I) \\
            &=-2\mathbbm{1}_{p_n}(I)+\mathbbm{1}_{m_0}(I)+\sum_{j=1}^{n-1}(-2\mathbbm{1}_{p_j}(I)+\mathbbm{1}_{r_j}(I)+\mathbbm{1}_{q_j}(I))\\
            &=g(I).
        \end{align*}
        \item[$(8)$] If $m_n\in S$, then 
        \begin{align*}
            f(I)=0&=-2\mathbbm{1}_{p_n}(I)+\mathbbm{1}_{m_0}(I)+\mathbbm{1}_{m_n}(I)\\
            &=-2\mathbbm{1}_{p_n}(I)+\mathbbm{1}_{m_0}(I)+\mathbbm{1}_{m_n}(I)+\sum_{j=1}^{n-1}(-2\mathbbm{1}_{p_j}(I)+\mathbbm{1}_{r_j}(I)+\mathbbm{1}_{q_j}(I))\\
            &=g(I).
        \end{align*}
    \end{itemize}
    The result follows.
\end{proof}

Utilizing Lemma~\ref{lem:pchain} and Proposition~\ref{prop:pchain2} above, we can now finish the proof of Theorem~\ref{thm:main1}.

\begin{proof}[Proof of Theorem~\ref{thm:main1}]~
In Proposition~\ref{prop:UB}, we showed that $\dim I_T(\mathcal{P})\le \mathrm{rk}(\mathcal{P})+1$ for the posets $\mathcal{P}=[m]\times [n]$, $([n]\times [n])\backslash\mathfrak{S}_2$, $\rt{A_n}$, and $\rt{B_n}$ for $n,m\ge 2$. Thus, it remains to show that $\dim I_T(\mathcal{P})\ge \mathrm{rk}(\mathcal{P})+1$ for such posets $\mathcal{P}$. To do so, we find an appropriately sized linearly independent subset of $I_T(\mathcal{P})$.
\begin{itemize}
    \item[$(a)$] Let $\mathcal{P}=[m]\times [n]$ for $m,n\ge 2$. Applying Lemma~\ref{lem:ucovs} and Proposition~\ref{prop:pchain2}, it follows that $$f_k=\begin{cases}
        \displaystyle\sum_{\substack{(i,j)\in\mathcal{P}\\ i-j=k}}\mathcal{T}_{(i,j)}, & k\neq 0 \\
        &\\
        -1+\displaystyle\sum_{(i,i)\in\mathcal{P}}\mathcal{T}_{(i,i)}, & k=0
    \end{cases}$$ 
    belongs to $I_T(\mathcal{P})$ for $k\in [1-n,m-1]$. Thus, since the $\mathcal{T}_p$ for $p\in\mathcal{P}$ are linearly independent, we have that $\{f_k~|~k\in [1-n,m-1]\}$ is a linearly independent subset of $I_T(\mathcal{P})$. Consequently, $\dim I_T(\mathcal{P})\ge n+m-1=\mathrm{rk}(\mathcal{P})+1$ and the result follows.

    \item[$(b)$] Let $\mathcal{P}=([n]\times [n])\backslash\mathfrak{S}_2$ for $n\ge 2$. Applying Lemma~\ref{lem:ucovs} and Proposition~\ref{prop:pchain2}, it follows that $$f_k=\displaystyle\sum_{\substack{(i,j)\in\mathcal{P}\\ i-j=k}}\mathcal{T}_{(i,j)}\in I_T(\mathcal{P})$$ for $k\in [1-n,0)$. In addition, $h_i=\mathcal{T}_{(i,i)}\in I_T(\mathcal{P})$ for $i\in (1,n)$ as a consequence of Lemma~\ref{lem:ucovs}, and it is straightforward to verify that $h_1=\mathcal{T}_{(1,1)}-1=\mathbbm{1}_{(1,2)}-2\mathbbm{1}_{(1,1)}$ and $h_n=\mathcal{T}_{(n,n)}=\mathbbm{1}_{(n-1,n)}-2\mathbbm{1}_{(n,n)}$ so that $h_1,h_n\in I_T(\mathcal{P})$. Thus, since the $\mathcal{T}_p$ for $p\in\mathcal{P}$ are linearly independent, we have that the $\{f_k~|~k\in (0,n-1]\}\cup \{h_i~|~i\in [1,n]\}$ is a linearly independent subset of $I_T(\mathcal{P})$. Consequently, $\dim I_T(\mathcal{P})\ge 2n-1=\mathrm{rk}(\mathcal{P})+1$ and the result follows.

    \item[$(c)$] Let $\mathcal{P}=\rt{A_n}$ for $n\ge 2$. Applying Proposition~\ref{prop:pchain2}, it follows that $$f_k=-1+\sum_{\substack{(i,j)\in\mathcal{P}\\ i-j=2k+1-n}}\mathcal{T}_{(i,j)}\in I_T(\mathcal{P})$$ for $k\in (0,n-1)$. In addition, it is straightforward to verify that $f_0=-1+\mathcal{T}_{(1,n)}=\mathbbm{1}_{(2,n)}-2\mathbbm{1}_{(1,n)}$ and $f_{n-1}=-1+\mathcal{T}_{(n,1)}=\mathbbm{1}_{(n,2)}-2\mathbbm{1}_{(n,1)}$ so that $f_0,f_{n-1}\in I_T(\mathcal{P})$. Thus, since the $\mathcal{T}_p$ for $p\in\mathcal{P}$ are linearly independent, we have that the $\{f_k~|~k\in [0,n-1]\}$ is a linearly independent subset of $I_T(\mathcal{P})$. Consequently, $\dim I_T(\mathcal{P})\ge n=\mathrm{rk}(\mathcal{P})+1$ and the result follows.

    \item[$(d)$] Let $\mathcal{P}=\rt{B_n}$ for $n\ge 2$. Applying Proposition~\ref{prop:pchain2}, it follows that $$f_k=-1+\sum_{\substack{(i,j)\in\mathcal{P}\\ i-j=-2k}}\mathcal{T}_{(i,j)}\in I_T(\mathcal{P})$$ for $k\in (0,n-1)$. In addition, $h_i=\mathcal{T}_{(i,i)}\in I_T(\mathcal{P})$ for $i\in [n+1,2n-1)$ by Lemma~\ref{lem:ucovs}, and it is straightforward to verify that $f_{n-1}=-1+\mathcal{T}_{(2n+1,1)}=\mathbbm{1}_{(2n+1,2)}-2\mathbbm{1}_{(2n+1,1)}$, $h_n=\mathcal{T}_{(n,n)}=\mathbbm{1}_{(n+1,n)}-2\mathbbm{1}_{(n,n)}$, and $h_{2n-1}=\mathcal{T}_{(2n-1,2n-1)}=\mathbbm{1}_{(2n-1,2n-2)}-2\mathbbm{1}_{(2n-1,2n-1)}$ so that $f_{n-1},h_n,h_{2n-1}\in I_T(\mathcal{P})$. Thus, since the $\mathcal{T}_p$ for $p\in\mathcal{P}$ are linearly independent, we have that the $\{f_k~|~k\in (0,n-1]\}\cup \{h_i~|~i\in [n,2n-1]\}$ is a linearly independent subset of $I_T(\mathcal{P})$. Consequently, $\dim I_T(\mathcal{P})\ge 2n-1=\mathrm{rk}(\mathcal{P})+1$ and the result follows. \qedhere
\end{itemize}
\end{proof}

To end this section, we provide two bases for each of the spaces $I_T(\mathcal{P})$ considered in Theorem~\ref{thm:main1}: one given in terms of the $\mathcal{T}_p$ and the other in terms of the $\mathbbm{1}_p$.

\begin{thm}\label{thm:bases}~
    \begin{itemize}
        \item[$(a)$] If $\mathcal{P}=[n]\times [m]$, then the following collections form bases of $I_T(\mathcal{P})$: \[ \mathcal{B}_1=\left\{\sum_{\substack{(i,j)\in\mathcal{P} \\ i-j=k}} \mathcal{T}_{(i,j)} : k\in [1-n,m-1],~k\neq 0
 \right\}\cup \left\{-1+\sum_{\substack{(i,j)\in\mathcal{P} \\ i-j=0}} \mathcal{T}_{(i,j)}\right\}\] and \[ \mathcal{B}_2=\left\{\sum_{\substack{(i,j)\in\mathcal{P} \\ i-j=k}} \mathbbm{1}_{(i,j)} : k\in [1-n,m-1]
 \right\}.\]
    \item[$(b)$] If $\mathcal{P}=([n]\times [n])\backslash\mathfrak{S}_2$, then the following collections form bases of $I_T(\mathcal{P})$: \[ \mathcal{B}_1=\left\{\sum_{\substack{(i,j)\in\mathcal{P} \\ i-j=k}} \mathcal{T}_{(i,j)} : k\in [1-n,0)
 \right\}\cup \Bigg\{\mathcal{T}_{(k,k)} : k\in (1,n]\Bigg\}\cup \Bigg\{\mathcal{T}_{(1,1)}-1\Bigg\}\] and 
 \begin{align*}
     \mathcal{B}_2=\left\{\sum_{\substack{(i,j)\in\mathcal{P} \\ i-j=k}} \mathbbm{1}_{(i,j)} : k\in [1-n,0)
 \right\}&\cup\Bigg\{2\mathbbm{1}_{(i,i)}-\mathbbm{1}_{(i-1,i)}-\mathbbm{1}_{(i,i+1)}~:~i\in (1,n)\Bigg\}\\
 &\cup\Bigg\{2\mathbbm{1}_{(1,1)}-\mathbbm{1}_{(1,2)},2\mathbbm{1}_{(n,n)}-\mathbbm{1}_{(n-1,n)}\Bigg\}.
 \end{align*}
 
 \item[$(c)$] If $\mathcal{P}=\rt{A_n}$ for $n\ge 2$, then the following collections form bases of $I_T(\mathcal{P})$: \[ \mathcal{B}_1=\left\{ -1 + \sum_{\substack{(i,j)\in\mathcal{P} \\ i-j=2k+1-n}} \mathcal{T}_{(i,j)} : 0 \leq k \leq n-1
 \right\}\] and \[ \mathcal{B}_2=\left\{ \sum_{\substack{(i,j)\in\mathcal{P} \\i-j=2k+1-n}} 2\mathbbm{1}_{(i,j)} - \sum_{\substack{(i,j)\in\mathcal{P} \\i-j=2k+2-n}}\mathbbm{1}_{(i,j)} - \sum_{\substack{(i,j)\in\mathcal{P} \\i-j=2k-n}}\mathbbm{1}_{(i,j)} : 0 \le k \le n-1 
 \right\}.\]

    \item[$(d)$] If $\mathcal{P}=\rt{B_n}$ for $n\ge 2$, then the following collections form bases of $I_T(\mathcal{P})$: \[ \mathcal{B}_1=\left\{ -1 + \sum_{\substack{(i,j)\in\mathcal{P} \\i-j=2k}} \mathcal{T}_{(i,j)} : 0 < k \leq n-1 \right\}\cup \Biggl\{ \mathcal{T}_{(n,n)}-1 \Biggr\} \cup \Biggl\{ \mathcal{T}_{(i,i)} : n < i \leq 2n-1 \Biggr\}\] and 
    $$\mathcal{B}_2=\left\{ \sum_{\substack{(i,j)\in\mathcal{P} \\i-j=2k}} 2\mathbbm{1}_{(i,j)} - \sum_{\substack{(i,j)\in\mathcal{P} \\i-j=2k-1}}\mathbbm{1}_{(i,j)} -  \sum_{\substack{(i,j)\in\mathcal{P} \\i-j=2k+1}}\mathbbm{1}_{(i,j)} : 0 < k \leq n-1 \right\}$$ $$\cup \Bigg\{ 2\mathbbm{1}_{(k,k)}-\mathbbm{1}_{(k,k-1)}-\mathbbm{1}_{(k+1,k)} : n < k < 2n-1   \Bigg\}\cup \Bigg\{ 2\mathbbm{1}_{(2n-1,2n-1)}-\mathbbm{1}_{(2n-1,2n-2)},2\mathbbm{1}_{(n,n)}-\mathbbm{1}_{(n+1,n)}\Bigg\}.$$
    \end{itemize}
\end{thm}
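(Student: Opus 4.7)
The plan is to treat the two bases separately.

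For $\mathcal{B}_1$, essentially no new work is needed: the proof of Theorem~\ref{thm:main1} already constructs each element of $\mathcal{B}_1$ as an explicit member of $I_T(\mathcal{P})$ via Lemma~\ref{lem:ucovs} and Proposition~\ref{prop:pchain2}, and exhibits the collection as linearly independent. Since $|\mathcal{B}_1|=\mathrm{rk}(\mathcal{P})+1=\dim I_T(\mathcal{P})$, $\mathcal{B}_1$ is automatically a basis.

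For $\mathcal{B}_2$ the cardinality count is direct, and linear independence in $\mathrm{Span}(\mathbbm{1}_p)$ follows by identifying, for each element, a cell at which no other element is supported. It remains to show $\mathcal{B}_2\subseteq I_T(\mathcal{P})$. The $\mathcal{B}_2$ elements come in three forms: \emph{single-tent} elements $2\mathbbm{1}_p-\mathbbm{1}_r-\mathbbm{1}_q$ (Types~2 and~3 of cases~(b) and~(d)), \emph{diagonal-tent} elements $2\sum_{\mathcal{P}_\ell}\mathbbm{1}_p-\sum_{\mathcal{P}_{\ell-1}}\mathbbm{1}_p-\sum_{\mathcal{P}_{\ell+1}}\mathbbm{1}_p$ (all of case~(c) and Type~1 of case~(d)), and \emph{diagonal-sum} elements $\sum_{i-j=k}\mathbbm{1}_{(i,j)}$ (Type~1 of cases~(a) and~(b)). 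For the first two forms the argument is immediate. A single-tent element equals $-\mathcal{T}_p$ by Lemma~\ref{lem:ucovs}, where the same identity continues to hold at boundary elements having only one cover in $\mathcal{P}$, with the missing term taken to be zero. A diagonal-tent element equals $-g_\ell=-f_\ell$ by the proof of Proposition~\ref{prop:pchain2}, once one verifies that $\mathcal{Q}_\ell=\mathcal{P}_{\ell-1}\cup\mathcal{P}_{\ell+1}$ in the relevant cases, which is a short geometric check.

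The substantive step is the diagonal-sum form. I would expand such an $h_k$ in the basis $\mathcal{B}_1$: using $f_\ell=g_\ell=-2\sum_{\mathcal{P}_\ell}\mathbbm{1}_p+\sum_{\mathcal{Q}_\ell}\mathbbm{1}_q$ and the fact that $\mathcal{Q}_\ell=\mathcal{P}_{\ell-1}\cup\mathcal{P}_{\ell+1}$ for the rectangle and the staircase, the coefficient of every $\mathbbm{1}_p$ in $g_\ell$ depends only on the diagonal containing $p$. Matching coefficients diagonal-by-diagonal in $h_k=\sum_\ell\lambda_\ell g_\ell$ reduces the problem to the tridiagonal linear system
\[
-2\lambda_\ell+\lambda_{\ell-1}+\lambda_{\ell+1}=\delta_{k,\ell},
\]
whose coefficient matrix is the classical $(1,-2,1)$-tridiagonal matrix. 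This matrix is well-known to be invertible (its $N\times N$ determinant equals $(-1)^N(N+1)$), so a unique solution exists, and $h_k\in I_T(\mathcal{P})$ follows.

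The main obstacle will be careful boundary handling in the tridiagonal argument. At extremal diagonals one of $\mathcal{P}_{\ell\pm 1}$ is empty and the corresponding term of $g_\ell$ drops out, so the tridiagonal system must be supplemented by the natural one-sided boundary conditions. In case~(b) the main diagonal $\mathcal{P}_0$ is represented in $\mathcal{B}_2$ by the single-tent elements rather than by a diagonal-sum $h_0$, which forces the linear system to include additional unknowns for those single-tent contributions together with a correspondingly modified right-hand side. Once these asymmetries are tracked carefully, the resulting enlarged system remains nonsingular, and the argument closes.
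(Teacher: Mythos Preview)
Your proposal is correct and follows essentially the same route as the paper: for $\mathcal{B}_1$ you invoke the proof of Theorem~\ref{thm:main1}, and for $\mathcal{B}_2$ you use the identity $f_\ell=g_\ell$ from Proposition~\ref{prop:pchain2} to recognize the tent-type elements as $\pm$(elements of $\mathcal{B}_1$) and then reduce the diagonal-sum elements to the invertible $(1,-2,1)$ tridiagonal system---exactly what the paper does, including the determinant $(-1)^N(N+1)$.

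Two small points where the paper is sharper. First, your linear-independence check for $\mathcal{B}_2$ by ``a cell at which no other element is supported'' fails verbatim in case~(b): every cell of the $k=-1$ diagonal sum lies in some single-tent element. The easy fix is to first read off the single-tent coefficients from the diagonal-$0$ cells (where only they are supported), then the $h_k$ coefficients from what remains. Second, for case~(b) the paper does not work with an unspecified ``enlarged system'' but instead sums all the single-tent elements of $\mathcal{B}_1$ to obtain $-2h_0+2h_{-1}$, which together with the $n-1$ relations $g_k=-2h_k+h_{k-1}+h_{k+1}$ gives a clean $n\times n$ system with matrix $M_n+E_n$ and determinant $(-1)^{n-1}n$. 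This is the concrete execution your last paragraph gestures at.
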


\begin{exmp}
    For $\mathcal{P}=\rt{B_3}$, we illustrate the bases $\mathcal{B}_1$ and $\mathcal{B}_2$ in Figures~\ref{fig:B1} and~\ref{fig:B2}. Each diagram corresponds to an individual basis element with labels of cells $p$ giving the coefficient of $\mathcal{T}_p$ in the case of $\mathcal{B}_1$ and $\mathbbm{1}_p$ in the case of $\mathcal{B}_2$. For $\mathcal{B}_1$, the basis element corresponding to Figure~\ref{fig:B1} (c) requires the addition of the constant $-1$.

    \begin{figure}[H]
        \centering
        $$\begin{tikzpicture}
            \node at (0,0) {\scalebox{0.5}{
\begin{tikzpicture}
    \draw (0,0)--(5,0)--(5,1)--(4,1)--(4,2)--(3,2)--(3,3)--(2,3)--(2,2)--(1,2)--(1,1)--(0,1)--(0,0);
    \draw (1,1)--(4,1);
    \draw (2,2)--(3,2);
    \draw (1,0)--(1,1);
    \draw (2,0)--(2,2);
    \draw (3,0)--(3,2);
    \draw (4,0)--(4,1);
    \node at (0.5,0.5) {\Large $1$};
    \node at (1.5, 0.5) {};
    \node at (2.5, 0.5) {};
    \node at (3.5, 0.5) {};
    \node at (4.5, 0.5) {};
    \node at (1.5, 1.5) {};
    \node at (2.5, 1.5) {};
    \node at (3.5, 1.5) {};
    \node at (2.5, 2.5) {};
    \node at (2.5,-1) {\Huge $(a)$};
\end{tikzpicture}}};
    \node at (3,0) {\scalebox{0.5}{
\begin{tikzpicture}
    \draw (0,0)--(5,0)--(5,1)--(4,1)--(4,2)--(3,2)--(3,3)--(2,3)--(2,2)--(1,2)--(1,1)--(0,1)--(0,0);
    \draw (1,1)--(4,1);
    \draw (2,2)--(3,2);
    \draw (1,0)--(1,1);
    \draw (2,0)--(2,2);
    \draw (3,0)--(3,2);
    \draw (4,0)--(4,1);
    \node at (0.5,0.5) {};
    \node at (1.5, 0.5) {};
    \node at (2.5, 0.5) {\Large 1};
    \node at (3.5, 0.5) {};
    \node at (4.5, 0.5) {};
    \node at (1.5, 1.5) {\Large 1};
    \node at (2.5, 1.5) {};
    \node at (3.5, 1.5) {};
    \node at (2.5, 2.5) {};
    \node at (2.5,-1) {\Huge $(b)$};
\end{tikzpicture}}};
 \node at (6,0) {\scalebox{0.5}{
\begin{tikzpicture}
    \draw (0,0)--(5,0)--(5,1)--(4,1)--(4,2)--(3,2)--(3,3)--(2,3)--(2,2)--(1,2)--(1,1)--(0,1)--(0,0);
    \draw (1,1)--(4,1);
    \draw (2,2)--(3,2);
    \draw (1,0)--(1,1);
    \draw (2,0)--(2,2);
    \draw (3,0)--(3,2);
    \draw (4,0)--(4,1);
    \node at (0.5,0.5) {};
    \node at (1.5, 0.5) {};
    \node at (2.5, 0.5) {};
    \node at (3.5, 0.5) {};
    \node at (4.5, 0.5) {};
    \node at (1.5, 1.5) {};
    \node at (2.5, 1.5) {};
    \node at (3.5, 1.5) {};
    \node at (2.5, 2.5) {\Large 1};
    \node at (2.5,-1) {\Huge $(c)$};
\end{tikzpicture}}};
 \node at (9,0) {\scalebox{0.5}{
\begin{tikzpicture}
    \draw (0,0)--(5,0)--(5,1)--(4,1)--(4,2)--(3,2)--(3,3)--(2,3)--(2,2)--(1,2)--(1,1)--(0,1)--(0,0);
    \draw (1,1)--(4,1);
    \draw (2,2)--(3,2);
    \draw (1,0)--(1,1);
    \draw (2,0)--(2,2);
    \draw (3,0)--(3,2);
    \draw (4,0)--(4,1);
    \node at (0.5,0.5) {};
    \node at (1.5, 0.5) {};
    \node at (2.5, 0.5) {};
    \node at (3.5, 0.5) {};
    \node at (4.5, 0.5) {};
    \node at (1.5, 1.5) {};
    \node at (2.5, 1.5) {};
    \node at (3.5, 1.5) {\Large 1};
    \node at (2.5, 2.5) {};
    \node at (2.5,-1) {\Huge $(d)$};
\end{tikzpicture}}};
 \node at (12,0) {\scalebox{0.5}{
\begin{tikzpicture}
    \draw (0,0)--(5,0)--(5,1)--(4,1)--(4,2)--(3,2)--(3,3)--(2,3)--(2,2)--(1,2)--(1,1)--(0,1)--(0,0);
    \draw (1,1)--(4,1);
    \draw (2,2)--(3,2);
    \draw (1,0)--(1,1);
    \draw (2,0)--(2,2);
    \draw (3,0)--(3,2);
    \draw (4,0)--(4,1);
    \node at (0.5,0.5) {};
    \node at (1.5, 0.5) {};
    \node at (2.5, 0.5) {};
    \node at (3.5, 0.5) {};
    \node at (4.5, 0.5) {\Large 1};
    \node at (1.5, 1.5) {};
    \node at (2.5, 1.5) {};
    \node at (3.5, 1.5) {};
    \node at (2.5, 2.5) {};
    \node at (2.5,-1) {\Huge $(e)$};
\end{tikzpicture}}};
        \end{tikzpicture}$$
        \caption{$\mathcal{B}_1$ for $\rt{B_3}$}
        \label{fig:B1}
    \end{figure}

    \begin{figure}[H]
        \centering
        $$\begin{tikzpicture}
            \node at (0,0) {\scalebox{0.5}{
\begin{tikzpicture}
    \draw (0,0)--(5,0)--(5,1)--(4,1)--(4,2)--(3,2)--(3,3)--(2,3)--(2,2)--(1,2)--(1,1)--(0,1)--(0,0);
    \draw (1,1)--(4,1);
    \draw (2,2)--(3,2);
    \draw (1,0)--(1,1);
    \draw (2,0)--(2,2);
    \draw (3,0)--(3,2);
    \draw (4,0)--(4,1);
    \node at (0.5,0.5) {\Large $2$};
    \node at (1.5, 0.5) {\Large $-1$};
    \node at (2.5, 0.5) {};
    \node at (3.5, 0.5) {};
    \node at (4.5, 0.5) {};
    \node at (1.5, 1.5) {};
    \node at (2.5, 1.5) {};
    \node at (3.5, 1.5) {};
    \node at (2.5, 2.5) {};
    \node at (2.5,-1) {\Huge $(a)$};
\end{tikzpicture}}};
    \node at (3,0) {\scalebox{0.5}{
\begin{tikzpicture}
    \draw (0,0)--(5,0)--(5,1)--(4,1)--(4,2)--(3,2)--(3,3)--(2,3)--(2,2)--(1,2)--(1,1)--(0,1)--(0,0);
    \draw (1,1)--(4,1);
    \draw (2,2)--(3,2);
    \draw (1,0)--(1,1);
    \draw (2,0)--(2,2);
    \draw (3,0)--(3,2);
    \draw (4,0)--(4,1);
    \node at (0.5,0.5) {};
    \node at (1.5, 0.5) {\Large $-1$};
    \node at (2.5, 0.5) {\Large $2$};
    \node at (3.5, 0.5) {\Large $-1$};
    \node at (4.5, 0.5) {};
    \node at (1.5, 1.5) {\Large $2$};
    \node at (2.5, 1.5) {\Large $-1$};
    \node at (3.5, 1.5) {};
    \node at (2.5, 2.5) {};
    \node at (2.5,-1) {\Huge $(b)$};
\end{tikzpicture}}};
 \node at (6,0) {\scalebox{0.5}{
\begin{tikzpicture}
    \draw (0,0)--(5,0)--(5,1)--(4,1)--(4,2)--(3,2)--(3,3)--(2,3)--(2,2)--(1,2)--(1,1)--(0,1)--(0,0);
    \draw (1,1)--(4,1);
    \draw (2,2)--(3,2);
    \draw (1,0)--(1,1);
    \draw (2,0)--(2,2);
    \draw (3,0)--(3,2);
    \draw (4,0)--(4,1);
    \node at (0.5,0.5) {};
    \node at (1.5, 0.5) {};
    \node at (2.5, 0.5) {};
    \node at (3.5, 0.5) {};
    \node at (4.5, 0.5) {};
    \node at (1.5, 1.5) {};
    \node at (2.5, 1.5) {\Large $-1$};
    \node at (3.5, 1.5) {};
    \node at (2.5, 2.5) {\Large $2$};
    \node at (2.5,-1) {\Huge $(c)$};
\end{tikzpicture}}};
 \node at (9,0) {\scalebox{0.5}{
\begin{tikzpicture}
    \draw (0,0)--(5,0)--(5,1)--(4,1)--(4,2)--(3,2)--(3,3)--(2,3)--(2,2)--(1,2)--(1,1)--(0,1)--(0,0);
    \draw (1,1)--(4,1);
    \draw (2,2)--(3,2);
    \draw (1,0)--(1,1);
    \draw (2,0)--(2,2);
    \draw (3,0)--(3,2);
    \draw (4,0)--(4,1);
    \node at (0.5,0.5) {};
    \node at (1.5, 0.5) {};
    \node at (2.5, 0.5) {};
    \node at (3.5, 0.5) {\Large $-1$};
    \node at (4.5, 0.5) {};
    \node at (1.5, 1.5) {};
    \node at (2.5, 1.5) {\Large $-1$};
    \node at (3.5, 1.5) {\Large $2$};
    \node at (2.5, 2.5) {};
    \node at (2.5,-1) {\Huge $(d)$};
\end{tikzpicture}}};
 \node at (12,0) {\scalebox{0.5}{
\begin{tikzpicture}
    \draw (0,0)--(5,0)--(5,1)--(4,1)--(4,2)--(3,2)--(3,3)--(2,3)--(2,2)--(1,2)--(1,1)--(0,1)--(0,0);
    \draw (1,1)--(4,1);
    \draw (2,2)--(3,2);
    \draw (1,0)--(1,1);
    \draw (2,0)--(2,2);
    \draw (3,0)--(3,2);
    \draw (4,0)--(4,1);
    \node at (0.5,0.5) {};
    \node at (1.5, 0.5) {};
    \node at (2.5, 0.5) {};
    \node at (3.5, 0.5) {\Large $-1$};
    \node at (4.5, 0.5) {\Large $2$};
    \node at (1.5, 1.5) {};
    \node at (2.5, 1.5) {};
    \node at (3.5, 1.5) {};
    \node at (2.5, 2.5) {};
    \node at (2.5,-1) {\Huge $(e)$};
\end{tikzpicture}}};
        \end{tikzpicture}$$
        \caption{$\mathcal{B}_2$ for $\rt{B_3}$}
        \label{fig:B2}
    \end{figure}
\end{exmp}

\begin{proof}[Proof of Theorem \ref{thm:bases}]~
\begin{itemize}
    \item[$(a)$] As noted in the proof of Theorem~\ref{thm:main1} (a), $\mathcal{B}_1$ is a linearly independent collection of $n+m-1=\dim I_T(\mathcal{P})$ vectors in $I_T(\mathcal{P})$. Thus, $\mathcal{B}_1$ forms a basis of $I_T(\mathcal{P})$. Now, for $\mathcal{B}_2$, note that considering the proof of Proposition~\ref{prop:pchain2}, we have that $$\mathcal{B}_1=\left\{\sum_{\substack{(i,j)\in\mathcal{P} \\ i-j=k}}-2\mathbbm{1}_{(i,j)}+\sum_{\substack{(i,j)\in\mathcal{P} \\ i-j=k-1}}\mathbbm{1}_{(i,j)}+\sum_{\substack{(i,j)\in\mathcal{P} \\ i-j=k+1}}\mathbbm{1}_{(i,j)} : k\in [1-n,m-1]\right\}.$$ Let $I_n$ denote the $n\times n$ identity matrix and $U_n$ (resp., $L_n$) denote the $n\times n$ matrix with $i,j$-entry equal to 1 if and only if $i-j=-1$ (resp., $i-j=1$) and 0 otherwise. Then the matrix for expressing the elements of $\mathcal{B}_1$ in terms of elements of the form $\sum_{\substack{(i,j)\in\mathcal{P} \\ i-j=k}}\mathbbm{1}_{(i,j)}$ for $k\in [1-n,m-1]$, denoted $M_{n+m-1}$, can be arranged to have the form $$M_{n+m-1}=-2I_{n+m-1}+U_{n+m-1}+L_{n+m-1}.$$ Expanding the determinant of $M_\ell=-2I_\ell+U_\ell+L_\ell$ along the top row for $\ell\ge 2$, it is straightforward to show that $\det M_2=3$, $\det M_3=-4$, and $\det M_\ell=-2\det M_{\ell-1}-\det M_{\ell-2}$ for $\ell>3$. Consequently, one has that $\det M_\ell=(-1)^\ell(\ell+1)\neq 0$ for $\ell\ge 2$. As this means that $M_{n+m-1}$ reduces to the identity matrix for $n,m\ge 2$, it follows that $\mathcal{B}_2$ provides another choice of basis for $I_T(\mathcal{P})$.

    \item[$(b)$] As noted in the proof of Theorem~\ref{thm:main1} (a), $\mathcal{B}_1$ is a linearly independent collection of $2n-1=\dim I_T(\mathcal{P})$ vectors in $I_T(\mathcal{P})$. Thus, $\mathcal{B}_1$ forms a basis of $I_T(\mathcal{P})$. Now, for $\mathcal{B}_2$, let $$S_1=\left\{\sum_{\substack{(i,j)\in\mathcal{P} \\ i-j=k}} T_{(i,j)} : k\in [1-n,0)
 \right\},$$ $$S_2=\left\{\mathcal{T}_{(k,k)} : k\in (1,n)\right\},$$ and $$S_3=\left\{\mathcal{T}_{(1,1)}-1,\mathcal{T}_{(n,n)}\right\}.$$ Note that $\mathcal{B}_1=S_1\cup S_2\cup S_3$. Moreover, considering the proof of Proposition~\ref{prop:pchain2}, we have that 
    $$S_1=\left\{\sum_{\substack{(i,j)\in\mathcal{P} \\ i-j=k}} -2\mathbbm{1}_{(i,j)}+\sum_{\substack{(i,j)\in\mathcal{P} \\ i-j=k-1}} \mathbbm{1}_{(i,j)}+\sum_{\substack{(i,j)\in\mathcal{P} \\ i-j=k+1}} \mathbbm{1}_{(i,j)} : k\in [1-n,0)
 \right\}$$ $$S_2=\left\{-2\mathbbm{1}_{(i,i)}+\mathbbm{1}_{(i-1,i)}+\mathbbm{1}_{(i,i+1)}~|~i\in (1,n)\right\},$$ and $$S_3=\left\{-2\mathbbm{1}_{(1,1)}+\mathbbm{1}_{(1,2)},-2\mathbbm{1}_{(n,n)}+\mathbbm{1}_{(n-1,n)}\right\}.$$ Let $M_n$ denote the $n\times n$ matrix defined in (a) above and $E_n$ denote the $n\times n$ matrix with a 1 in the $n,n$-entry and 0's elsewhere. Similar to as in $(a)$, the matrix for expressing the elements of $S_1$ along with $$\sum_{f\in S_2\cup S_3}f=\sum_{(i,i)\in\mathcal{P}} -2\mathbbm{1}_{(i,i)}+\sum_{\substack{(i,j)\in\mathcal{P} \\ i-j=-1}} 2\mathbbm{1}_{(i,j)}$$ in terms of elements of the form $\sum_{\substack{(i,j)\in\mathcal{P} \\ i-j=k}} \mathbbm{1}_{(i,j)}$ for $k\in [1-n,0)$, can be arranged to have the form $M_n+E_n$ for $n\ge 2$. Also, as in $(a)$, it can be shown that $M_n+E_n$ has a nonzero determinant. In fact, one can show that $\det(M_n+E_n)=(-1)^{n-1}n$ for $n\ge 2$. Consequently, it follows that $\sum_{\substack{(i,j)\in\mathcal{P} \\ i-j=k}} \mathbbm{1}_{(i,j)}\in I_T(P)$ for $k\in [1-n,0)$. Combining these $n-1$ linearly independent elements with the negatives of those of $S_2$ and $S_3$ gives us $\mathcal{B}_2$. As the elements of $\mathcal{B}_2$ are clearly linearly independent, it follows that $\mathcal{B}_2$ provides a basis of $I_T(\mathcal{P})$ as well. 

    \item[$(c)$]  As noted in the proof of Theorem~\ref{thm:main1} (c), $\mathcal{B}_1$ is a linearly independent collection of $n=\dim I_T(\mathcal{P})$ vectors in $I_T(\mathcal{P})$. Thus, $\mathcal{B}_1$ forms a basis of $I_T(\mathcal{P})$. Now, for $\mathcal{B}_2$, note that considering the proof of Proposition~\ref{prop:pchain2}, we have that the elements of $\mathcal{B}_2$ are just the negatives of the elements of $\mathcal{B}_1$. Consequently, $\mathcal{B}_2$ forms a basis of $I_T(\mathcal{P})$ as well.

    \item[$(d)$] Follows as in (c). \qedhere
    
\end{itemize}
\end{proof}

\subsection{Integer Partitions}\label{sec:ip}

In this subsection, we apply the results of Sections~\ref{sec:UB} and~\ref{sec:LB} to a class of posets defined by integer partitions. Specifically, for the poset $\mathcal{P}$ defined by the Ferrer's diagram of an integer partition $\lambda$, we find that $\dim I_T(\mathcal{P})$ can be computed from the border strip of $\lambda$.

Recall that an \textbf{integer partition} is a finite non-increasing sequence of positive integers. Given an integer partition $\lambda=(\lambda_1,\hdots,\lambda_\ell)\in\mathbb{Z}_{>0}^\ell$, the \textbf{Ferrer's diagram} $F(\lambda)$ of $\lambda$ is the diagram consisting of the cells $\{(i,j)~|~1\le i\le \ell,~1\le j\le \lambda_i\}$. The \textbf{border strip} of $F(\lambda)$, denoted $B(\lambda)$, is the collection of all cells $(i,j)\in F(\lambda)$ for which $(i+1,j+1)\notin F(\lambda)$. We refer to $(i,j)\in B(\lambda)$ as a \textbf{corner cell} if $(i+1,j),(i,j+1)\in B(\lambda)$.
To the integer partition $\lambda$, we associate the poset $\mathcal{P}(\lambda)$ defined by $F(\lambda)$, as in  Example~\ref{ex:part}.

\begin{exmp}\label{ex:part}
The diagram $F(\lambda)$ defining the poset $\mathcal{P}=\mathcal{P}(\lambda)$ for $\lambda=(5,2,1,1)$ is illustrated in Figure~\ref{fig:part}, where the cells are labeled by the associated elements of $\mathcal{P}$. Note that $B(\lambda)$ corresponds to the collection of all cells in $F(\lambda)$ except for the one labeled $(1,1)$. Among these cells, the ones corresponding to $(1,2)$ and $(2,1)$ are the corner cells.

\begin{figure}[h]
    \centering
    $$\scalebox{0.7}{\begin{tikzpicture}
        \draw (0,0)--(1,0)--(1,2)--(2,2)--(2,3)--(5,3)--(5,4)--(0,4)--(0,0);
        \draw (0,1)--(1,1);
        \draw (0,2)--(1,2);
        \draw (0,3)--(2,3);
        \draw (1,4)--(1,2);
        \draw (2,4)--(2,3);
        \draw (3,4)--(3,3);
        \draw (4,4)--(4,3);
        \node at (0.5, 0.5) {$(4,1)$};
        \node at (0.5, 1.5) {$(3,1)$};
        \node at (0.5, 2.5) {$(2,1)$};
        \node at (0.5, 3.5) {$(1,1)$};
        \node at (1.5, 2.5) {$(2,2)$};
        \node at (1.5, 3.5) {$(1,2)$};
        \node at (2.5, 3.5) {$(1,3)$};
        \node at (3.5, 3.5) {$(1,4)$};
        \node at (4.5, 3.5) {$(1,5)$};
    \end{tikzpicture}}$$
    \caption{Diagram defining $\mathcal{P}(\lambda)$ for $\lambda=(5,2,1,1)$}
    \label{fig:part}
\end{figure}
    
\end{exmp}

With the notation above, the main result of this section is as follows.

\begin{thm}\label{thm:mainpartition}
    Let $\lambda$ be an integer partition, $\mathcal{P}=\mathcal{P}(\lambda)$, $N$ denote the number of cells in $B(\lambda)$, and $C$ denote the number of corner cells in $B(\lambda)$. Then $\dim I_T(\mathcal{P})=N-C$.
\end{thm}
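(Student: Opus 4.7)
The plan is to prove matching bounds $\dim I_T(\mathcal{P}) \le N - C$ and $\dim I_T(\mathcal{P}) \ge N - C$, using that the Ferrer's diagram $F(\lambda)$ is connected, row convex, and column convex.

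For the upper bound, Theorem~\ref{thm:diamond} applies at every unit square in $F(\lambda)$: whenever $(i, j), (i+1, j+1) \in F(\lambda)$, it forces $c_{(i, j)} = c_{(i+1, j+1)}$. Iterating along each diagonal, every coefficient $c_p$ equals $c_q$ where $q$ is the southeast-most cell on the diagonal through $p$; this $q$ is the unique cell of $B(\lambda)$ on that diagonal. Together with Lemma~\ref{lem:pchain}, this leaves at most $N$ free parameters (one per border cell). I will cut this to $N - C$ by showing $c_{p_0} = 0$ for each corner cell $p_0 = (i, j)$. Set
\[
I_3 = I((i, j)),\quad I_1 = I((i+1, j)),\quad I_2 = I((i, j+1)),\quad I = I((i+1, j),(i, j+1)).
\]
A direct check gives $I_1 \cap I_2 = I_3$ and $I_1 \cup I_2 = I$, so the inclusion--exclusion identity $f(I_3) + f(I) = f(I_1) + f(I_2)$ holds for every $f \in I_T(\mathcal{P}) \subseteq \mathrm{Span}_{\mathbb{R}}(\mathbbm{1}_p)$. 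Writing $f = c + \sum_p c_p \mathcal{T}_p$, the constants cancel, and the identity becomes $\sum_p c_p \Delta_p = 0$ with $\Delta_p := \mathcal{T}_p(I_1) + \mathcal{T}_p(I_2) - \mathcal{T}_p(I_3) - \mathcal{T}_p(I)$. A case analysis on $p$---using that $\mathrm{Min}(\mathcal{P} \setminus I((a, b))) = \{(a+1, 1), (1, b+1)\} \cap F(\lambda)$ for a Ferrer's diagram---is expected to give $\Delta_{p_0} = 1$ and $\Delta_p = 0$ otherwise, so that $c_{p_0} = 0$. The cancellation is clean because, at a corner cell, the condition $(i+1, j+1) \notin F(\lambda)$ prevents any ``new'' minimal element from appearing in $\mathcal{P} \setminus I$ beyond those already present in $\mathcal{P} \setminus I_1$ or $\mathcal{P} \setminus I_2$.

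For the lower bound, I exhibit $N - C$ linearly independent elements of $I_T(\mathcal{P})$, one per non-corner diagonal. For a Ferrer's diagram, every non-empty diagonal contains a unique cell of $B(\lambda)$ (the southeast-most), distinct corner cells lie on distinct diagonals, and the (at most two) extreme diagonals $k = -(\lambda_1 - 1)$ and $k = \ell - 1$ each consist of the single non-corner cell $(1, \lambda_1)$ or $(\ell, 1)$; combining these observations shows that the number of non-corner diagonals is exactly $N - C$. For a non-corner, non-extreme diagonal $k$, condition (ii) of Proposition~\ref{prop:pchain2} reduces---via the Ferrer's hypothesis---to the statement that no corner cell lies on diagonal $k$, which holds by assumption, so the proposition produces an appropriate $f_k \in I_T(\mathcal{P})$. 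For each extreme diagonal, the single cell has at most one cover on each side in $F(\lambda)$, so Lemma~\ref{lem:ucovs}, together with its immediate boundary variants for a maximal element with a unique cover below and for a minimal element with a unique cover above (each proved by the same short calculation as Lemma~\ref{lem:ucovs}), supplies the required element. Since each of these $N - C$ elements has its $\mathcal{T}$-support contained in a single diagonal and distinct diagonals are disjoint, they are linearly independent in $I_T(\mathcal{P})$.

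The main obstacle is the case analysis verifying $\Delta_p = 0$ for $p \neq p_0$ in the upper bound. One must track $\mathrm{Max}(I_k)$ and $\mathrm{Min}(\mathcal{P} \setminus I_k)$ for all four ideals and handle the boundary sub-cases $i = 1$ or $j = 1$, in which $(1, j+1)$ coincides with $(i, j+1)$ or $(i+1, 1)$ coincides with $(i+1, j)$; in every sub-case the contributions pair up and cancel, leaving only $\Delta_{p_0}$.
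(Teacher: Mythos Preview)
Your outline is correct and follows essentially the same two–bound strategy as the paper: Theorem~\ref{thm:diamond} plus Lemma~\ref{lem:pchain} for the diagonal reduction, a four–ideal inclusion–exclusion to kill the coefficient at each corner cell, and Proposition~\ref{prop:pchain2} together with Lemma~\ref{lem:ucovs} (and its end-of-chain variants) to produce $N-C$ independent elements supported on distinct diagonals.

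The one substantive difference is in how you force $c_{p_0}=0$ at a corner cell $p_0=(i,j)$. The paper invokes Lemma~\ref{lem:root_zero} with $q_1=(i+1,j)$ and $q_2=(i,j+1)$; note however that those elements lie \emph{above} $p_0$ and their principal ideals intersect, so the hypotheses of Lemma~\ref{lem:root_zero} as stated are not met---what is really needed is the order-dual statement. Your direct computation with the four ideals $I((i,j))$, $I((i+1,j))$, $I((i,j+1))$, $I((i+1,j),(i,j+1))$ is exactly that dual argument specialized to the Ferrer's shape, and your expected identities $\Delta_{p_0}=1$, $\Delta_p=0$ otherwise are indeed what the case analysis yields (the key point being that $(i+1,j+1)\notin F(\lambda)$ prevents a new minimal element from appearing in $\mathcal{P}\setminus I$). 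In the lower bound, your observation that condition~(ii) of Proposition~\ref{prop:pchain2} fails on diagonal $k$ precisely when the border cell on that diagonal is a corner lets you apply the proposition uniformly to all non-extreme non-corner diagonals; the paper instead splits on whether $(i-1,j-1)\in F(\lambda)$, which amounts to the same thing.
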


\begin{exmp}\label{ex:mainpartition}
    For the poset $\mathcal{P}=\mathcal{P}(\lambda)$ of Example~\ref{ex:part}, using the notation of Theorem~\ref{thm:mainpartition}, we have $N=8$ and $C=2$. Thus, applying Theorem~\ref{thm:mainpartition}, we have $\dim I_T(\mathcal{P})=8-2=6$.
\end{exmp}

\begin{rem}
    Note that, unlike for the posets of Theorem~\ref{thm:main1}, given $\mathcal{P}=\mathcal{P}(\lambda)$ for a partition $\lambda$, it is possible to have $\dim I_T(\mathcal{P})\neq \mathrm{rk}(\mathcal{P})+1$. For example, taking $\lambda=(5,2,1,1)$ as in Example~\ref{ex:mainpartition}, we have that $\dim I_T(\mathcal{P})=6\neq 5=\mathrm{rk}(\mathcal{P})+1$.
\end{rem}

For the proof of Theorem~\ref{thm:mainpartition}, we require the following lemma relating the collections of cells in the border strip of a Ferrer's board to cells $(i,j)$ with $i-j=k$ fixed.

\begin{lemma}\label{lem:partitionhelp}
    Let $\lambda$ be an integer partition and $$S=\{k~|~\exists (i,j)\in F(\lambda)~\text{such that}~i-j=k\}.$$ Then $\phi:B(\lambda)\to S$ defined by $(i,j)\mapsto i-j$ is a bijection.
\end{lemma}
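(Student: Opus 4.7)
The plan is to verify that $\phi$ is both injective and surjective, using the fundamental structural property that the Ferrer's diagram $F(\lambda)$ of a partition is \emph{down-left closed}: if $(i,j) \in F(\lambda)$ and $1 \le i' \le i$, $1 \le j' \le j$, then $(i',j') \in F(\lambda)$. This is immediate from $\lambda$ being weakly decreasing, and it is the only nontrivial input the argument needs.

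For injectivity, I would suppose toward contradiction that $(i_1,j_1)$ and $(i_2,j_2)$ are distinct elements of $B(\lambda)$ with $i_1 - j_1 = i_2 - j_2$. Without loss of generality $i_1 < i_2$, whence $j_1 < j_2$, so $i_1 + 1 \le i_2$ and $j_1 + 1 \le j_2$. Since $(i_2, j_2) \in F(\lambda)$, down-left closure forces $(i_1+1, j_1+1) \in F(\lambda)$, contradicting $(i_1, j_1) \in B(\lambda)$.

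For surjectivity, given $k \in S$, I would consider the non-empty finite set $D_k = \{(i,j) \in F(\lambda) : i - j = k\}$ and select $(i^*, j^*) \in D_k$ maximizing $i^*$ (equivalently $j^*$, since $k$ is fixed). Observe that $(i^*+1, j^*+1)$ has row minus column equal to $k$, so if it lay in $F(\lambda)$ it would be a member of $D_k$ with strictly larger row index, contradicting maximality. Thus $(i^*+1, j^*+1) \notin F(\lambda)$, which by definition places $(i^*, j^*)$ in $B(\lambda)$, and $\phi(i^*, j^*) = k$.

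I do not anticipate a substantial obstacle: the statement is essentially a structural tautology once one spots the correct canonical representative of each diagonal $D_k$, namely the cell furthest to the bottom-right, which is precisely characterized by membership in the border strip. The only minor care needed is in confirming that the maximality argument for surjectivity does not require separate treatment for boundary cases (when $i^*$ is the last row or $j^* = \lambda_{i^*}$), but the condition $(i^*+1, j^*+1) \notin F(\lambda)$ is forced in either case.
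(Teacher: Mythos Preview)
Your proposal is correct and follows essentially the same approach as the paper: for surjectivity you both pick the maximal cell along the diagonal $i-j=k$ to land in $B(\lambda)$, and for injectivity you both use that two distinct border-strip cells on the same diagonal would force the smaller one to have its southeast neighbor in $F(\lambda)$ via the down-left closure of the Ferrers diagram. Your write-up is in fact a bit cleaner than the paper's, which contains a minor index slip in the injectivity argument.
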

\begin{proof}
    To see that $\phi$ is onto, take $v\in S$. By definition, there exists $(i,j)\in F(\lambda)$ such that $i-j=v$. Taking $k\in \mathbb{Z}_{\ge 0}$ maximal such that $(i+k,j+k)\in F(\lambda)$ provides $(i+k,j+k)\in B(\lambda)$ satisfying $i+k-(j+k)=i-j=v$. Thus, $\phi$ is onto. Now, to see that $\phi$ is one-to-one, assume that $(i,j),(k,\ell)\in B(\lambda)$ satisfy $i-j=k-\ell=v\in S$. Moreover, assume that $i>k$. Then there exists $t\in\mathbb{Z}_{> 0}$ such that $i+t=k$ and $j+t=\ell$. Consequently, since $(i,j),(k,\ell)\in F(\lambda)$, it follows from the definition of $F(\lambda)$ that $(i+1,j+1)\in F(\lambda)$, contradicting our assumption that $(i,j)\in B(\lambda)$. Therefore, $\phi$ is one-to-one and the result follows. 
\end{proof}

Lemma~\ref{lem:partitionhelp} in hand, we can now prove Theorem~\ref{thm:mainpartition}.

\begin{proof}[Proof of Theorem~\ref{thm:mainpartition}]
First, we show that $\dim I_T(\mathcal{P})\le N-C$. Let $$f=c+\sum_{(i,j)\in\mathcal{P}}c_{i,j}\mathcal{T}_{(i,j)}\in I_T(\mathcal{P})$$ and $M=\mathrm{Min}(\mathcal{P})$. Applying Lemma~\ref{lem:pchain}, it follows that $c=-\sum_{p\in M}c_p$. Moreover, applying Theorem~\ref{thm:diamond} as in the proof of Theorem~\ref{prop:UB}, we find that $c_{i,j}=c_{k,\ell}$ for all $(i,j),(k,\ell)\in\mathcal{P}$ satisfying $i-j=k-\ell$. Thus, applying Lemma~\ref{lem:partitionhelp}, it follows that
\begin{equation}\label{eq:partition}
    \dim I_T(\mathcal{P})\le |\{k~|~\exists (i,j)\in\mathcal{P}~\text{such that}~i-j=k\}|=N.
\end{equation}
Now, note that if $p=(i,j)\in B(\lambda)$ is a corner cell, then we can apply Lemma~\ref{lem:root_zero} with $q_1=(i+1,j)$ and $q_2=(i,j+1)$ to conclude that $c_{i,j}=0$. Consequently, considering (\ref{eq:partition}), it follows that $$\dim I_T(\mathcal{P})\le N-C.$$

Finally, to see that $\dim I_T(\mathcal{P})\ge N-C$, take $(i,j)\in B(\lambda)$ with $i-j=k$ that is not a corner cell. If $(i-1,j-1)\in F(\lambda)$, then, applying Proposition~\ref{prop:pchain2}, it follows that $$f_k=\begin{cases}
        \displaystyle\sum_{\substack{(i,j)\in\mathcal{P}\\ i-j=k}}\mathcal{T}_{(i,j)}, & \mathcal{P}_k\cap M=\emptyset \\
        &\\
        -1+\displaystyle\sum_{\substack{(i,j)\in\mathcal{P}\\ i-j=k}}\mathcal{T}_{(i,j)}, & otherwise
    \end{cases}$$
    belongs to $I_T(\mathcal{P})$. On the other hand, if $(i-1,j-1)\notin F(\lambda)$, then there are three possibilities:
\begin{itemize}
    \item $(i-1,j),(i+1,j)\in F(\lambda)$ or $(i,j-1),(i,j+1)\in F(\lambda)$, in which case $\mathcal{T}_{(i,j)}\in I_T(\mathcal{P})$ by Lemma~\ref{lem:ucovs};
    \item $(i-1,j)\in F(\lambda)$ and $(i+1,j)\notin F(\lambda)$, in which case $\mathcal{T}_{(i,j)}=-2\mathbbm{1}_{(i,j)}+\mathbbm{1}_{(i-1,j)}\in I_T(\mathcal{P})$; or
    \item $(i,j-1)\in F(\lambda)$ and $(i+1,j)\notin F(\lambda)$, in which case $\mathcal{T}_{(i,j)}=-2\mathbbm{1}_{(i,j)}+\mathbbm{1}_{(i,j-1)}\in I_T(\mathcal{P})$.
\end{itemize}
Regardless of the case, for each such $(i,j)\in B(\lambda)$, we have that $\mathcal{T}_{(i,j)}\in I_T(\mathcal{P})$. Consequently, since the $\mathcal{T}_{(i,j)}$ for $(i,j)\in\mathcal{P}$ are linearly independent, it follows that $\dim I_T(\mathcal{P})\ge N-C$, completing the proof.
\end{proof}

\section{Antichains}\label{sec:antichain}

In this section, we turn our attention to the antichain toggleability spaces $A_T(\mathcal{P})$. Our goal is to establish the antichain versions of Theorems~\ref{thm:main1} and~\ref{thm:mainpartition}, stated below, combined.

\begin{thm}\label{thm:main2}~
\begin{enumerate}
    \item[$(a)$] If $\mathcal{P}=[m]\times [n]$ for $m,n\ge 2$, then $\dim A_T(\mathcal{P})=\mathrm{rk}(\mathcal{P})+1=n+m-1$.
    \item[$(b)$] If $\mathcal{P}=([n]\times[n])\backslash \mathfrak{S}_2$ for $n\ge 2$, then $\dim A_T(\mathcal{P})=\mathrm{rk}(\mathcal{P})+1=2n-1$.
    \item[$(c)$] If $\mathcal{P}=\Phi^+(A_n)$ for $n\ge 2$, then $\dim A_T(\mathcal{P})=\mathrm{rk}(\mathcal{P})+1=n$.
    \item[$(d)$] If $\mathcal{P}=\Phi^+(B_n)$ for $n\ge 2$, then $\dim A_T(\mathcal{P})=\mathrm{rk}(\mathcal{P})+1=2n-1$.
    \item[$(e)$] If $\mathcal{P}=\mathcal{P}(\lambda)$ for an integer partition $\lambda$, $N$ denotes the number of cells in $B(\lambda)$, and $C$ denotes the number of corner cells in $B(\lambda)$, then $\dim A_T(\mathcal{P})=N-C$.
\end{enumerate}
\end{thm}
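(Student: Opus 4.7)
The plan is to adapt the two-step approach of Section~\ref{sec:conj1} to the antichain setting: first establish the upper bound $\dim A_T(\mathcal{P}) \le \mathrm{rk}(\mathcal{P})+1$ (respectively $\le N-C$ in part $(e)$) by proving antichain analogues of Lemma~\ref{lem:pchain}, Theorem~\ref{thm:Diamond}, and Lemma~\ref{lem:root_zero}, then exhibit matching lower bounds by showing that the basis elements already constructed for $I_T(\mathcal{P})$ also lie in $A_T(\mathcal{P})$.

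The pchain constraint extends verbatim because $\mathcal{T}^-_p(\emptyset)=0$ for every $p$, forcing $c=-\sum_{p\in\mathrm{Min}(\mathcal{P})}c_p$ for any $f=c+\sum c_p\mathcal{T}_p\in A_T(\mathcal{P})$. The Diamond analogue is obtained from the identity that, if $f=\sum a_p\mathcal{T}^-_p$, then $f(I(A))=\sum_{p\in A}a_p$ for every antichain $A$. In particular, for every 2-element antichain $\{p_2,p_3\}$ one has
\[
f(I(p_2,p_3)) \;=\; f(I(p_2))+f(I(p_3)),
\]
since both sides equal $a_{p_2}+a_{p_3}$. Expanding both sides in the $c_p$ basis and tracking which minimal elements of $\mathcal{P}\setminus I(\cdot)$ appear in each evaluation, most terms cancel and the identity simplifies to a relation of the form
\[
c_{p_4} \;=\; c_{u}+c_{v}-c_{w},
\]
where $u,v,w$ are specific minimal elements around the diamond (concretely, for $[m]\times[n]$ one obtains $c_{(i+1,j+1)}=c_{(1,j+1)}+c_{(i+1,1)}-c_{(1,1)}$). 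The root\_zero analogue is obtained similarly from the antichain evaluation at $\{q_1,q_2\}$.

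Iterating the antichain Diamond identity in each of the five families expresses every $c_p$ in terms of a distinguished boundary set of coefficients: the first row and column in the product-of-chains case, the first column together with the diagonal in the shifted staircase, the anti-diagonals in the two root poset cases, and the non-corner cells of $B(\lambda)$ in the partition case. In each case this boundary set has exactly the expected cardinality $\mathrm{rk}(\mathcal{P})+1$ (or $N-C$), yielding the upper bound. For the lower bound, we verify that each element of the basis $\mathcal{B}_1$ from Theorem~\ref{thm:bases}, and the analogous non-corner toggleability statistics for $\mathcal{P}(\lambda)$, actually lies in $\mathrm{Span}(\mathcal{T}^-_p)$ by setting $a_p:=f(I(p))$ and checking $f(I(A))=\sum_{p\in A}a_p$ for every antichain $A$; the explicit anti-diagonal form of each basis element reduces this to a direct case check based on how $A$ and $\mathrm{Min}(\mathcal{P}\setminus I(A))$ intersect each anti-diagonal.

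The main obstacle will be the bookkeeping in the antichain Diamond identity: unlike Theorem~\ref{thm:Diamond}, whose proof telescopes directly to $c_{p_4}-c_{p_1}$, the antichain version produces a relation that mixes $c_{p_4}$ with coefficients of several boundary minimal elements, and one must verify in each of the five families that repeated application of this relation (together with the root\_zero analogue for corner configurations and the pchain relation) leaves precisely the expected number of free parameters. Once this dimension count is secured, part $(e)$ follows by integrating the refined constraints into the argument of Theorem~\ref{thm:mainpartition}, and linear independence of the exhibited basis elements follows exactly as in the proof of Theorem~\ref{thm:main1}.
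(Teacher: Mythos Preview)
Your lower-bound strategy contains a genuine gap: the basis elements of $I_T(\mathcal{P})$ from Theorem~\ref{thm:bases} do \emph{not} in general lie in $\mathrm{Span}_{\mathbb{R}}(\mathcal{T}^-_p\mid p\in\mathcal{P})$. Already for $\mathcal{P}=[2]\times[2]$ and $f_1=\mathcal{T}_{(2,1)}\in I_T(\mathcal{P})$, the check you propose fails. Evaluating at principal ideals forces $a_{(1,1)}=f_1(I((1,1)))=1$, $a_{(2,1)}=f_1(I((2,1)))=-1$, and $a_{(1,2)}=f_1(I((1,2)))=1$; but then $a_{(1,2)}+a_{(2,1)}=0$, whereas $f_1(I((1,2),(2,1)))=\mathcal{T}_{(2,1)}(I((1,2),(2,1)))=-1$. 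Thus $f_1\notin\mathrm{Span}_{\mathbb{R}}(\mathcal{T}^-_p)$ and hence $f_1\notin A_T(\mathcal{P})$. The spaces $I_T(\mathcal{P})$ and $A_T(\mathcal{P})$ are two different subspaces of $\mathrm{Span}_{\mathbb{R}}(1,\mathcal{T}_p\mid p\in\mathcal{P})$, not nested, so reusing the $I_T$ basis cannot work. Your upper-bound outline via an antichain analogue of the Diamond relation may well be salvageable (indeed, for $[m]\times[n]$ the relation $c_{(i+1,j+1)}=c_{(1,j+1)}+c_{(i+1,1)}-c_{(1,1)}$ you indicate does bound the dimension by $m+n-1$), but you would need an entirely new source of elements of $A_T(\mathcal{P})$ for the lower bound.

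The paper sidesteps all of this by proving $\dim A_T(\mathcal{P})=\dim I_T(\mathcal{P})$ directly, without a separate dimension count. It introduces a rook statistic $R^D_{(i,j)}$ (extending~\cite{chan2017expected}) and shows $R^D_{(i,j)}=1$ identically; the southeast half of this identity gives $\mathbbm{1}_{(i,j)}\equiv\sum_{q\succeq(i,j)}c_{q,(i,j)}\mathcal{T}^-_q$ with $c_{(i,j),(i,j)}=1$. In any linear extension this change of basis is upper triangular with nonzero diagonal, so it induces a linear isomorphism carrying $I_T(\mathcal{P})$ onto $A_T(\mathcal{P})$ whenever the defining diagram has no outward corners to the southeast of any cell. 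All five families in Theorem~\ref{thm:main2} satisfy this hypothesis, so the theorem follows immediately from Theorems~\ref{thm:main1} and~\ref{thm:mainpartition} with no further bookkeeping.
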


\noindent
To do so, we first focus on proving Theorem~\ref{thm:dim} below which generalizes Theorem~\ref{thm:main1} (a) and (b), and establishes that the equality $\dim A_T(\mathcal{P})=\dim I_T(\mathcal{P})$ holds for posets defined by simply-connected diagrams with no outward corners. Such posets are exactly the distributive lattices of order ideals of finite, width-two posets. 

\begin{thm}\label{thm:dim}
    Let $\mathcal{P}$ be a poset defined by a simply connected diagram $D$ with no outward corners. Then $\dim A_T(\mathcal{P})=\dim I_T(\mathcal{P})=\rk(\mathcal{P})+1$.
\end{thm}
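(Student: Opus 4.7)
The plan is to split Theorem~\ref{thm:dim} into the $I_T$ and $A_T$ bounds, treating the first as a direct extension of the technology of Section~\ref{sec:conj1} and the second via the rook-statistic framework of Chan-Haddadan-Hopkins-Moci~\cite{chan2017expected}.

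For the $I_T$ bound, I would first verify a geometric claim: a simply connected diagram with no outward corners is automatically row- and column-convex and enjoys the \emph{strong diamond property} that whenever three of $(i,j), (i+1,j), (i,j+1), (i+1,j+1)$ lie in $\mathcal{P}$, so does the fourth. The idea is that a missing fourth cell would force a prohibited $EN$ or $NE$ boundary corner strictly NW (resp.\ SE) of some existing cell. This unlocks Theorem~\ref{thm:diamond} at every diamond in $\mathcal{P}$ and forces $c_{(i,j)} = c_{(i+1,j+1)}$ for any $f = c + \sum c_p \mathcal{T}_p \in I_T(\mathcal{P})$. Combined with Lemma~\ref{lem:pchain}, this yields $\dim I_T(\mathcal{P}) \le \rk(\mathcal{P})+1$, contingent on the combinatorial verification that the number of ``diamond-equivalence classes'' of cells is exactly $\rk(\mathcal{P})+1$ (for rectangles this reduces to the number of diagonals, but for thin skew shapes like ribbons each cell is its own class, still summing to $\rk(\mathcal{P})+1$). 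For the matching lower bound I would apply Proposition~\ref{prop:pchain2} and Lemma~\ref{lem:ucovs} to each equivalence class to produce the required linearly independent elements.

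For the $A_T$ bound, the plan is to use the identification $\mathcal{P} \cong \mathcal{J}(Q)$ for a width-two poset $Q$ (which these diagrams are exactly known to describe) and translate the problem to a rectangular board on which rook placements encode order ideals of $\mathcal{P}$. Rook statistics then re-express antichain indicators $\mathcal{T}_p^-$ as signed sums of rook-placement indicators, after which I would mirror the $I_T$ strategy: prove an antichain-coefficient analogue of the diamond relation (namely $a_{(i,j)} = a_{(i+1,j+1)}$) to bound $\dim A_T(\mathcal{P}) \le \rk(\mathcal{P})+1$, then construct explicit diagonal-sum elements of $A_T(\mathcal{P})$ to match this upper bound.

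The hard part will be the antichain step. Naively expanding $\mathcal{T}_p^- = \mathbbm{1}_p(1-\mathbbm{1}_{p_1'})(1-\mathbbm{1}_{p_2'})$, where $p_1', p_2'$ are the upward covers of $p$, produces a product term $\mathbbm{1}_{p_1'}\mathbbm{1}_{p_2'}$ that does not collapse to a single ideal indicator, because order ideals of $\mathcal{P}$ are closed under meets but not joins. So the direct algebra behind Theorem~\ref{thm:Diamond} does not transfer to antichain coefficients; the rook-statistic reformulation is what lets one absorb these quadratic terms into a workable identity. The main technical obstacle will be producing the correct rook identities---both the diamond-style coefficient relation and the diagonal-sum construction---and handling the extremal diagonals, where boundary corrections (constants and unmatched end-pieces) need careful tracking.
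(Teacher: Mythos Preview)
Your $I_T$ upper-bound plan via Theorem~\ref{thm:diamond} matches the paper's argument. The $A_T$ half, however, contains a genuine gap: the antichain diamond relation $a_{(i,j)}=a_{(i+1,j+1)}$ that you set out to prove is \emph{false}. Already for $\mathcal{P}=[2]\times[2]$, the function
\[
g=-2\,\mathcal{T}^-_{(1,1)}-\mathcal{T}^-_{(1,2)}-\mathcal{T}^-_{(2,1)}
\]
lies in $A_T(\mathcal{P})$ (one checks $g=-1+\mathcal{T}_{(1,1)}-\mathcal{T}_{(2,2)}$), yet $a_{(1,1)}=-2\neq 0=a_{(2,2)}$. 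So no rook identity will deliver this coefficient relation, and your upper bound on $\dim A_T$ cannot be obtained this way. Correspondingly, the explicit elements of $A_T$ are not diagonal sums: the reduced rooks $\tilde{R}^D_{(i,j)}$ the paper uses are supported on two branching lattice paths emanating from $(i,j)$ (Figure~\ref{fig:Rij}), not on a single diagonal.

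The paper avoids antichain coefficients entirely. The step you are missing is Lemma~\ref{lem:uppers} with Corollary~\ref{cor:half_rooks}: the ``half-rook'' identity yields $\mathbbm{1}_{(i,j)}\equiv\sum_{(i',j')\succeq(i,j)} c_{(i',j'),(i,j)}\,\mathcal{T}^-_{(i',j')}$ with $c_{(i,j),(i,j)}=1$, an upper-triangular change of basis (in any linear extension) that preserves $\equiv$-equivalence and hence forces $\dim A_T(\mathcal{P})=\dim I_T(\mathcal{P})$ outright. The lower bound is then established on the $A_T$ side, not the $I_T$ side: Lemma~\ref{lem:Rij1} shows each rook statistic $R^D_{(i,j)}$ equals the constant function $1$, so $\tilde{R}^D_{(i,j)}\in A_T(\mathcal{P})$, and Theorem~\ref{thm:se_boundary_boxes} proves that the $\rk(\mathcal{P})+1$ reduced rooks along a fixed maximal chain are linearly independent. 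Your proposed direct $I_T$ lower bound via Proposition~\ref{prop:pchain2} and Lemma~\ref{lem:ucovs} is therefore unnecessary for the paper's argument, and the paper does not pursue it.
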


\noindent
For our approach to Theorem~\ref{thm:dim}, we build upon the technique of rook statistics introduced by Chan, Haddadan, Hopkins, and Moci in~\cite{chan2017expected}, which was later expanded by Hopkins~\cite{hopkins2017cde,hopkins2019cde} and Defant, Hopkins, Poznanovi\'{c}, and Propp~\cite{defant2021homomesy}. In the process of establishing Theorem~\ref{thm:dim}, we find that Theorem~\ref{thm:main2} follows as a consequence of supporting results needed for the proof of Theorem~\ref{thm:dim}.

Now, as a first step towards proving Theorem~\ref{thm:dim}, in Lemma~\ref{lem:uppers} below we establish a relationship between $A_T(\mathcal{P})$ and $I_T(\mathcal{P})$ for certain posets $\mathcal{P}$. Ongoing, for $f,g:\mathcal{J}(\mathcal{P})\to\mathbb{R}$, we write $f\equiv g$ to express that $f-g\in\text{Span}(\mathcal{T}_p~|~p\in\mathcal{P})$.

\begin{lemma}\label{lem:uppers}
    Let $\mathcal{P}$ be a poset for which $\mathbbm{1}_p\equiv \sum_{q\ge p} c_{q,p} \mathcal{T}_q^{-}$ with $c_{p,p}\neq 0$. Then $\dim A_T(\mathcal{P})=\dim I_T(\mathcal{P})$.
\end{lemma}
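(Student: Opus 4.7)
The plan is to use the hypothesis to construct an explicit linear isomorphism between $I_T(\mathcal{P})$ and $A_T(\mathcal{P})$. Since both spaces are defined as intersections $I_T(\mathcal{P}) = \mathrm{Span}_{\mathbb{R}}(\mathbbm{1}_p) \cap T$ and $A_T(\mathcal{P}) = \mathrm{Span}_{\mathbb{R}}(\mathcal{T}_p^-) \cap T$, where $T = \mathrm{Span}_{\mathbb{R}}(\mathcal{T}_p) + \mathbb{R}$, the equivalence $\equiv$ in the hypothesis (which is exactly ``equal mod $\mathrm{Span}(\mathcal{T}_p)$'') suggests that the appropriate map is the one realizing the identity in the quotient by $\mathrm{Span}(\mathcal{T}_p)$.

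First I would define $\phi: \mathrm{Span}_{\mathbb{R}}(\mathbbm{1}_p) \to \mathrm{Span}_{\mathbb{R}}(\mathcal{T}_q^-)$ on basis vectors by
\[
\phi(\mathbbm{1}_p) = \sum_{q \ge p} c_{q,p}\, \mathcal{T}_q^-
\]
and extend linearly. To show $\phi$ is an isomorphism, I would fix any linear extension $p_1, p_2, \ldots, p_n$ of the partial order on $\mathcal{P}$. Written in the ordered bases $(\mathbbm{1}_{p_i})$ and $(\mathcal{T}_{p_j}^-)$, the matrix of $\phi$ is triangular (the entry in position $(j,i)$ is $c_{p_j, p_i}$ which vanishes unless $p_j \ge p_i$, forcing $j \ge i$) with diagonal entries $c_{p_i, p_i} \ne 0$. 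Hence $\phi$ is invertible, so in particular a vector-space isomorphism between the two ambient spans.

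Next I would verify that $\phi$ restricts to an isomorphism $I_T(\mathcal{P}) \to A_T(\mathcal{P})$. The hypothesis $\mathbbm{1}_p \equiv \phi(\mathbbm{1}_p)$ means exactly that $\mathbbm{1}_p - \phi(\mathbbm{1}_p) \in \mathrm{Span}(\mathcal{T}_p) \subseteq T$, so by linearity $f - \phi(f) \in T$ for every $f \in \mathrm{Span}(\mathbbm{1}_p)$. Consequently $f \in T$ if and only if $\phi(f) \in T$. If $f \in I_T(\mathcal{P})$, then $f \in T$, so $\phi(f) \in T$, and since $\phi(f) \in \mathrm{Span}(\mathcal{T}_q^-)$ by construction, $\phi(f) \in A_T(\mathcal{P})$. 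Conversely, given $g \in A_T(\mathcal{P})$, by surjectivity of $\phi$ there is a unique $f \in \mathrm{Span}(\mathbbm{1}_p)$ with $\phi(f) = g$; then $f = g + (f - \phi(f)) \in T$, so $f \in I_T(\mathcal{P})$. Thus $\phi$ restricts to a linear bijection between the two toggleability spaces, yielding $\dim A_T(\mathcal{P}) = \dim I_T(\mathcal{P})$.

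There is no real obstacle here beyond keeping the bookkeeping straight: the key observation is simply that the hypothesis provides a triangular change-of-basis matrix between the $\mathbbm{1}_p$'s and the $\mathcal{T}_q^-$'s modulo $\mathrm{Span}(\mathcal{T}_p)$. The bulk of the work in the surrounding section will be in producing such a relation (with triangular structure and nonzero diagonal) for the posets of interest, which is where the hypothesis actually needs to be verified.
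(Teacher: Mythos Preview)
Your proof is correct and follows essentially the same approach as the paper: define the linear map $\phi(\mathbbm{1}_p)=\sum_{q\ge p}c_{q,p}\mathcal{T}_q^-$, observe that its matrix with respect to a linear extension is triangular with nonzero diagonal (hence invertible), and use that $\phi$ preserves $\equiv$-equivalence to conclude it restricts to a bijection between $I_T(\mathcal{P})$ and $A_T(\mathcal{P})$. Your write-up is in fact more explicit than the paper's in unpacking why preservation of $\equiv$-equivalence yields the restriction to the toggleability spaces.
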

\begin{proof}
    Define the linear map $f:\text{span}(\mathbbm{1}_p)\to \text{span}(\mathcal{T}_p^{-})$ by $\mathbbm{1}_p\mapsto \sum_{q\ge p} c_{q,p} \mathcal{T}_{q}^{-}$. With respect to the ordering given by a linear extension of $\mathcal{P}$, where recall that a linear extension of a finite poset is an order preserving bijection from $\mathcal{P}\to [|\mathcal{P}|]$, the associated representing matrix of $f$ is upper triangular with diagonal entries equal to $c_{p,p}$. Thus, since $c_{p,p}\neq 0$, it follows that $f$ is invertible. Moreover, since $f$, evidently, preserves $\equiv$--equivalency, we find that $\dim A_T(\mathcal{P}) = \dim I_T(\mathcal{P})$, as desired.
\end{proof}

Next, we define certain combinations of the operators $\mathcal{T}^-_{(k,\ell)}$ and $\mathcal{T}^+_{(k,\ell)}$ for posets $\mathcal{P}$ defined by simply connected diagrams with no outward corners and specified elements $(i,j)\in\mathcal{P}$. 

\begin{definition}
For $D$ a simply connected diagram with no outward corners and $(i,j)\in D$, define the rook statistic \[R^D_{(i,j)} = \sum_{i'\le i,~j'\le j} \mathcal{T}_{(i',j')}^++\sum_{i'\ge i,~j'\ge j} \mathcal{T}_{(i',j')}^--\sum_{\substack{(i'+1,j'),(i',j'+1)\in D \\ i'< i,~j'< j}}\mathcal{T}_{(i',j')}^--\sum_{\substack{(i'-1,j'),(i',j'-1) \in D \\ i'> i,~j'>j}} \mathcal{T}_{(i',j')}^+\] where $\mathcal{T}^{(\pm)}_{(\ell,k)}=0$ if $(\ell,k)\notin D$. In addition, define $\tilde{R}^D_{(i,j)}$ to be the element of $\text{span}(\mathcal{T}_D^-)$ that is $\equiv-$ equivalent to $R^D_{(i,j)}$. See Figure~\ref{fig:Rij} for an illustration of such a rook statistic.
\end{definition}
\begin{figure}[htb]
    \centering
     \scalebox{0.65}{\begin{tikzpicture}
    \filldraw [red] (11.5,0.5) circle (3pt);
    \filldraw [red] (11.5,1.5) circle (3pt);
    \filldraw [red] (11.5,2.5) circle (3pt);
    \filldraw [red] (11.5,3.5) circle (3pt);
    \filldraw [red] (11.5,4.5) circle (3pt);
    \filldraw [red] (11.5,5.5) circle (3pt);
    
    \filldraw [red] (11.5,7.5) circle (3pt);
    \filldraw [red] (11.5,8.5) circle (3pt);
    \filldraw [red] (11.5,9.5) circle (3pt);
    \filldraw [red] (11.5,10.5) circle (3pt);
    \filldraw [red] (11.5,11.5) circle (3pt);
    \filldraw [red] (11.5,12.5) circle (3pt);
    \filldraw [red] (4.5,13.5) circle (3pt);
    \filldraw [red] (4.5,14.5) circle (3pt);
    \filldraw [red] (0.5,11.5) circle (3pt);
    \filldraw [red] (1.5,11.5) circle (3pt);
    \filldraw [red] (2.5,11.5) circle (3pt);
    \filldraw [red] (3.5,11.5) circle (3pt);
    \filldraw [red] (4.5,11.5) circle (3pt);
    \filldraw [red] (5.5,11.5) circle (3pt);
    \filldraw [red] (6.5,9.5) circle (3pt);
    \filldraw [red] (7.5,9.5) circle (3pt);
    \filldraw [red] (8.5,6.5) circle (3pt);
    \filldraw [red] (9.5,6.5) circle (3pt);
    \filldraw [red] (10.5,6.5) circle (3pt);
    \filldraw [red] (12.5,6.5) circle (3pt);
    \filldraw [red] (13.5,6.5) circle (3pt);
    \filldraw [red] (14.5,6.5) circle (3pt);

\draw[step=1.0,black,thin] (0,0) grid (15,15);
\draw[-, ultra thick, blue] (0,15) -- (0,11)--(6,11)--(6,9)--(8,9)--(8,2)--(11,2)--(11,0)--(15,0)--(15,7)--(12,7)--(12,13)--(5,13)--(5,15)--(0,15);
\draw[-, ultra thick, red] (11.5,6.5)--(14.5,6.5);
\draw[-, ultra thick, red] (11.5,6.5)--(8.5,6.5)--(7.5,9.5)--(6.5,9.5)--(5.5,11.5)--(0.5,11.5);
\draw[-, ultra thick, red] (11.5,0.5)--(11.5,12.5)--(4.5,13.5)--(4.5,14.5);
 \filldraw [blue] (11.5,6.5) circle (3pt);
\tpp{0}{15}
\tpp{1}{15}
\tpp{2}{15}
\tpp{3}{15}
\tpp{0}{14}
\tpp{1}{14}
\tpp{2}{14}
\tpp{3}{14}
\tpp{0}{13}
\tpp{1}{13}
\tpp{2}{13}
\tpp{3}{13}
\tpp{4}{13}
\tpp{5}{13}
\tpp{6}{13}
\tpp{7}{13}
\tpp{8}{13}
\tpp{9}{13}
\tpp{10}{13}
\tpp{6}{12}
\tpp{7}{12}
\tpp{8}{12}
\tpp{9}{12}
\tpp{10}{12}
\tpp{6}{11}
\tpp{7}{11}
\tpp{8}{11}
\tpp{9}{11}
\tpp{10}{11}
\tpp{8}{10}
\tpp{9}{10}
\tpp{10}{10}
\tpp{8}{9}
\tpp{9}{9}
\tpp{10}{9}
\tpp{8}{8}
\tpp{9}{8}
\tpp{10}{8}
\tp{0}{12}
\tp{1}{12}
\tp{2}{12}
\tp{3}{12}
\tp{4}{12}
\tp{5}{12}
\tp{6}{10}
\tp{7}{10}
\tp{8}{7}
\tp{9}{7}
\tp{10}{7}
\tp{11}{7}
\tp{11}{8}
\tp{11}{9}
\tp{11}{10}
\tp{11}{11}
\tp{11}{12}
\tp{11}{13}
\tp{4}{14}
\tp{4}{15}

\tpm{12}{6}
\tpm{13}{6}
\tpm{14}{6}
\tpm{12}{5}
\tpm{13}{5}
\tpm{14}{5}
\tpm{12}{4}
\tpm{13}{4}
\tpm{14}{4}
\tpm{12}{3}
\tpm{13}{3}
\tpm{14}{3}
\tpm{12}{2}
\tpm{13}{2}
\tpm{14}{2}
\tpm{12}{1}
\tpm{13}{1}
\tpm{14}{1}
\tm{11}{1}
\tm{11}{2}
\tm{11}{3}
\tm{11}{4}
\tm{11}{5}
\tm{11}{6}
\tm{11}{7}
\tm{12}{7}
\tm{13}{7}
\tm{14}{7}
\end{tikzpicture}}
    \caption{A rook for a shape with no outward corners. The associated reduced rook has a coefficient of 1 for the antichain indicator function for every cell with a red circle in it, and the cell with a blue circle has a coefficient of 2 for the associated antichain indicator function.}
    \label{fig:Rij}
\end{figure}
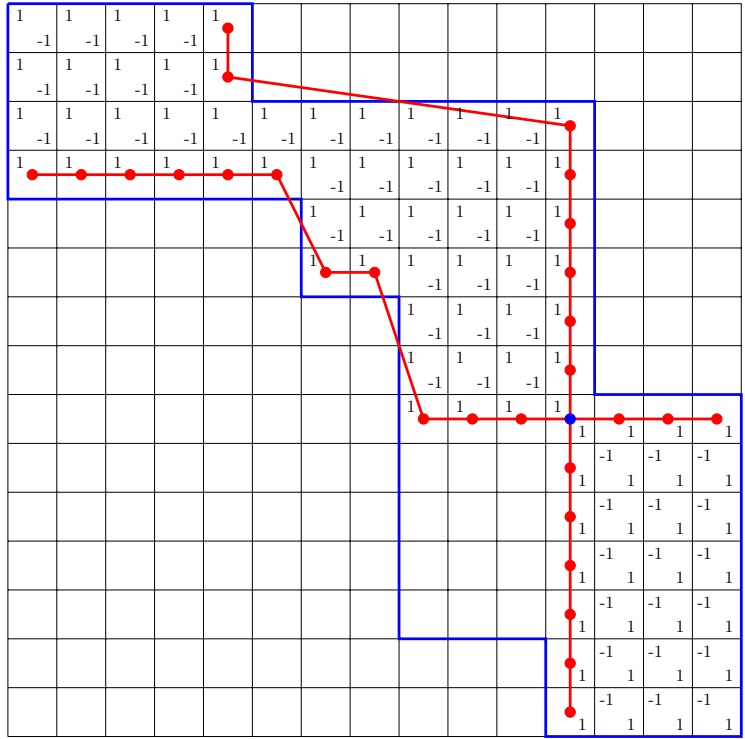

Now, Lemma~\ref{lem:Rij1} and Theorem~\ref{thm:se_boundary_boxes} below establish properties of the functions $R^D_{(i,j)}$ and $\tilde{R}^D_{(i,j)}$. As we will find, a corollary of Lemma~\ref{lem:Rij1} allows us to prove Theorem~\ref{thm:main2}, while all three following results allow us to prove Theorem~\ref{thm:dim}.

\begin{lemma}\label{lem:Rij1}
    For a simply-connected diagram $D$ with no outward corners and $(i,j)\in D$, we have $R^D_{(i,j)}=1$.
\end{lemma}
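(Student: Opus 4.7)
The plan is to prove $R^D_{(i,j)}(I) = 1$ for every $I \in \mathcal{J}(\mathcal{P})$ by induction on $|I|$.

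For the base case $I = \emptyset$, note that $\mathrm{Max}(\emptyset) = \emptyset$ forces the second and third sums defining $R^D_{(i,j)}$ to vanish; moreover, any minimal element $(i',j') \in \mathrm{Min}(D)$ satisfies $(i'-1,j'), (i',j'-1) \notin D$, so the fourth sum also vanishes. Hence $R^D_{(i,j)}(\emptyset) = |\mathrm{Min}(D) \cap \mathrm{NW}(i,j)|$, where $\mathrm{NW}(i,j) = \{(i',j') \in D : i' \le i,\, j' \le j\}$. The heart of the base case is the geometric claim that this cardinality equals $1$: under the hypotheses on $D$, every cell has a unique minimal element of $D$ weakly northwest of it. If two incomparable minimal elements $(a_1,b_1), (a_2,b_2)$ both lay in $\mathrm{NW}(i,j)$ (say with $a_1 < a_2$ and $b_2 < b_1$), then the NW boundary of $D$ joining them must descend in rows while moving west in columns, and simply-connectedness forces this portion of the boundary to contain a convex EN-shaped corner lying strictly northwest of $(i,j)$, contradicting the no-outward-corners hypothesis.

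For the inductive step, let $I' = I \cup \{p\}$ with $p = (a,b) \in \mathrm{Min}(\mathcal{P} \setminus I)$, and set $\Delta := R^D_{(i,j)}(I') - R^D_{(i,j)}(I)$. The values of $\mathcal{T}^+_q$ and $\mathcal{T}^-_q$ change between $I$ and $I'$ only when $q = p$ or $q$ is a cover of $p$. Specifically: $\mathcal{T}^+_p$ drops from $1$ to $0$; $\mathcal{T}^-_p$ rises from $0$ to $1$ because every upper cover of $p$ lies outside $I'$; for each upper cover $q$ of $p$ whose remaining lower cover is in $I$, $\mathcal{T}^+_q$ rises from $0$ to $1$; and for each lower cover $r$ of $p$ that was in $\mathrm{Max}(I)$, $\mathcal{T}^-_r$ drops from $1$ to $0$. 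I verify $\Delta = 0$ by a case analysis on the position of $p$ relative to $(i,j)$ -- equal to it, on the same row or column, strictly northwest, or strictly southeast -- tracking which of these local changes fall in the index sets $A, B, C, E$ underlying the four sums of $R^D_{(i,j)}$ and checking that the signed contributions cancel in every case.

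The main obstacle will be making the case analysis airtight. The delicate cases are when $p$ lies strictly northwest or southeast of $(i,j)$ and belongs to $C$ or $E$. There, the $+1$ change in $\mathcal{T}^-_p$ (or $\mathcal{T}^+_p$) enters the third (or fourth) sum with the opposite sign, and must be balanced against contributions at covers of $p$ in the first (or second) sum. The simply-connected and no-outward-corners hypotheses -- which together imply $D$ is row- and column-convex -- are the structural ingredients ensuring the cover structure near $p$ is well-behaved: every cover of $p$ lies in a predictable region relative to $(i,j)$, and no unexpected boundary terms appear. Once $\Delta = 0$ is verified in every case, induction combined with the base case yields $R^D_{(i,j)}(I) = 1$ for all $I \in \mathcal{J}(\mathcal{P})$.
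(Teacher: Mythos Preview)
Your inductive strategy is correct and genuinely different from the paper's argument. The case analysis you outline really does close: the key structural fact you need (beyond row/column convexity) is that $D$ is closed under componentwise $\min$ and $\max$, which is exactly what ``no outward corners'' encodes. For instance, in the delicate subcase $p=(a,b)$ strictly northwest of $(i,j)$ with both $(a+1,b),(a,b+1)\in D$, the $-2$ coming from $p$ itself (one from $A$, one from $C$) is balanced by $+2$ from the four neighbouring contributions, because for each diagonal cell $(a-1,b+1)$ and $(a+1,b-1)$ exactly one of the two adjacent contributions fires, regardless of whether that diagonal cell lies in $D\cap I$, in $D\setminus I$, or outside $D$; the meet/join closure is precisely what guarantees the relevant covers land in the correct index set. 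The remaining positional cases are similar or easier. Your base case is also fine, and in fact simplifies: the hypotheses force $D$ to have a unique minimal element, so $|\mathrm{Min}(D)\cap\mathrm{NW}(i,j)|=1$ is immediate.

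The paper proceeds quite differently. Rather than induct on $|I|$, it fixes $I$ and encodes it as a lattice path $P_I$ (north/east steps) separating $I$ from $\mathcal{P}\setminus I$, sitting between the two monotone boundary paths of $D$. After extending $P_I$ along the boundary in a prescribed way, the nonzero contributions to $R^D_{(i,j)}(I)$ are identified with peaks and valleys of $P_I$ in the relevant quadrant, which alternate and begin/end on the same type, telescoping to $1$. Your approach trades this global path picture for a purely local cover computation: more elementary, but with a longer case split. One concrete advantage of the paper's route is that its two cases ($(i,j)\in I$ versus $(i,j)\notin I$) separately show the ``southeast half'' of $R^D_{(i,j)}$ equals $\mathbbm{1}_{(i,j)}$ and the ``northwest half'' equals $1-\mathbbm{1}_{(i,j)}$; this decomposition is exactly what is invoked in Corollary~\ref{cor:half_rooks}, and would need to be extracted separately from your argument.
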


\begin{proof}
    Define $$S_1=\{\mathcal{T}^+_{(i',j')}~|~i'\le i,~j'\le j,~(i',j')\in D\},\quad S_2=\{\mathcal{T}^-_{(i',j')}~|~i'\ge i,~j'\ge j,~(i',j')\in D\},$$ $$S_3=\{-\mathcal{T}^+_{(i',j')}~|~i'> i,~j'>j,~(i',j'),(i'-1,j'),(i',j'-1) \in D\},$$ $$S_4=\{-\mathcal{T}^-_{(i',j')}~|~i'< i,~j'< j,~(i',j'),(i'+1,j'),(i',j'+1)\in D\},$$ and
    $S=S_1\cup S_2\cup S_3\cup S_4$, so, $R^D_{(i,j)}$ is a sum of the elements of $S$. Let $$S^+_I=\{f\in S~|~f(I)= 1\},\quad\quad S^-_I=\{f\in S~|~f(I)= -1\},\quad\quad\text{and}\quad\quad S_I=S^+_I\cup S^-_I;$$ that is, $S_I=\{f\in S~|~f(I)\neq 0\}$. Note that $R^D_{(i,j)}(I)=\sum_{f\in S_I}f(I)$. 
    
    By assumption, the border of $D$ consists of two paths formed by sequences of unit south and east steps which start at the same point to the top left, end at the same point to the bottom right, and do not intersect at any other points. Since the two paths never intersect outside of their beginning and ending points, one must lie below the other. Consequently, we will refer to the \textit{lower} and \textit{upper} boundary paths, denoted $L$ and $U$, respectively. Now, given an order ideal $I$ of $\mathcal{P}$ and removing the cells corresponding to the elements of $\mathcal{P}\backslash I$, we are left with a collection of cells which lie within $D$ and above a path $P_I$ connecting $L$ and $U$ via a sequence unit steps which can be east or north. For example, see Figure~\ref{fig:ideal}, where the elements contained in $I$ are marked with an $\bigtimes$, $L$ is colored green, $U$ blue, and $P_I$ red.

    \begin{figure}[H]
        \centering
        $$\begin{tikzpicture}[scale = 1]
            \draw[step=1.0,black,thin] (0,0) grid (5,3);
            \draw[-, green, ultra thick] (0,3) -- (0,2)--(1,2)--(1,1)--(2,1)--(2,0)--(5,0);
            \draw[-, blue, ultra thick] (5,0)--(5,2)--(4,2)--(4,3)--(0,3);
            \draw[-, red, ultra thick] (2,1)--(3,1)--(3,2)--(4,2);
         \node at (0.5, 2.5) {$\bigtimes$};
         \node at (1.5, 2.5) {$\bigtimes$};
         \node at (2.5, 2.5) {$\bigtimes$};
         \node at (3.5, 2.5) {$\bigtimes$};
         \node at (1.5, 1.5) {$\bigtimes$};
         \node at (2.5, 1.5) {$\bigtimes$};
        \end{tikzpicture}$$
        \caption{$L$, $U$, and $P_I$ for an ideal $I$}
        \label{fig:ideal}
    \end{figure}
    
    \noindent
    Note that, given the form of the boundary of $D$, the ways in which $P_I$ can intersect $L$ are illustrated in Figure~\ref{fig:PIL}, while the ways in which $P_I$ can intersect $L$ are illustrated in Figure~\ref{fig:PIU}; in both cases, the initial or final unit step of $P_I$ is solid, while any portions of $L$ and $U$ are dashed.
    
    \begin{figure}[H]
        \centering
        \begin{tikzpicture}
            \node at (0,0) {\begin{tikzpicture}
            \draw[dashed] (0,0)--(0.5,0)--(0.5,-0.5);
            \draw (0.5,0)--(0.5,0.5);
        \end{tikzpicture}};
        \node at (0,-1) {$(a)$};

        \node at (2,0) {\begin{tikzpicture}
            \draw[dashed] (0,0)--(0.5,0)--(0.5,-0.5);
            \draw (0.5,0)--(1,0);
        \end{tikzpicture}};
        \node at (2,-1) {$(b)$};

        \node at (4,0) {\begin{tikzpicture}
            \draw[dashed] (0,0)--(0,0.5)--(0,1);
            \draw (0,0.5)--(0.5,0.5);
        \end{tikzpicture}};
        \node at (4,-1) {$(c)$};

         \node at (6,0) {\begin{tikzpicture}
            \draw[dashed] (0,0)--(0.5,0)--(1,0);
            \draw (0.5,0)--(0.5,0.5);
        \end{tikzpicture}};
        \node at (6,-1) {$(d)$};

        \end{tikzpicture}
        \caption{Intersection of $P_I$ with $L$}
        \label{fig:PIL}
    \end{figure}

    \begin{figure}[H]
        \centering
        \begin{tikzpicture}
            \node at (0,0) {\begin{tikzpicture}
            \draw[dashed] (0.5,0)--(0,0)--(0,0.5);
            \draw (-0.5,0)--(0,0);
        \end{tikzpicture}};
        \node at (0,-1) {$(a)$};

        \node at (2,0) {\begin{tikzpicture}
            \draw[dashed] (0.5,0)--(0,0)--(0,0.5);
            \draw (0,-0.5)--(0,0);
        \end{tikzpicture}};
        \node at (2,-1) {$(b)$};

        \node at (4,0) {\begin{tikzpicture}
            \draw[dashed] (0,0)--(0,0.5)--(0,1);
            \draw (0,0.5)--(-0.5,0.5);
        \end{tikzpicture}};
        \node at (4,-1) {$(c)$};

         \node at (6,0) {\begin{tikzpicture}
            \draw[dashed] (0,0)--(0.5,0)--(1,0);
            \draw (0.5,0)--(0.5,-0.5);
        \end{tikzpicture}};
        \node at (6,-1) {$(d)$};

        \end{tikzpicture}
        \caption{Intersection of $P_I$ with $U$}
        \label{fig:PIU}
    \end{figure}

    We break the remainder of the proof into two cases depending on whether or not $(i,j)\in I$. Ongoing, we denote unit north and east steps by $N$ and $E$, respectively.
    \bigskip

    \noindent
    \textbf{Case 1:} $(i,j)\notin I$. In this case, we extend $P_I$ to include any adjacent unit south steps of $L$ starting at the intersection point with $P_I$ as in Figure~\ref{fig:PIL} (b) and (c), now thought of as $N$ steps of $P_I$. Similarly, we extend $P_I$ to include any $E$ steps of $U$ beginning at the intersection point with $P_I$ as in Figure~\ref{fig:PIU} (b) and (d). Note that in this case, elements of $S_I^+$ are of the form $\mathcal{T}^+_{(i',j')}\in S_1$ and are in one-to-one correspondence with $(i',j')\in D$ for which $i'\le i$, $j'\le j$, and $(i',j')$ is bordered by a $N$ followed by a $E$ step of $P_I$, i.e., a valley of $P_I$. Similarly, elements of $S_I^-$ are of the form $-\mathcal{T}^-_{(i',j')}\in S_4$ and are in one-to-one correspondence with $(i',j')\in D$ for which $i'< i$, $j'< j$, and $(i',j')$ is bordered by a $E$ followed by a $N$ step of $P_I$, i.e., a peak of $P_I$. Evidently, $P_I$ alternates between peaks and valleys. Thus, if we can show that within the region $D'$ of $D$ consisting of cells $(i',j')$ with $i'\le i$ and $j'\le j$, $P_I$ starts and ends with a valley, it will follow that $R^D_{(i,j)}(I)=1$, as claimed. For a contradiction, assume that $P_I$ starts with a peak in $D'$. Then this first peak within $D'$ borders a cell $(k,\ell)$ with $k<i$ and $\ell<j$, and all steps preceding this first peak must be east steps. Consequently, $P_I$ must intersect $L$ as in Figure~\ref{fig:PIL} (b) or (c). In either case, our extension of $P_I$ creates a valley which borders a cell $(k',\ell+1)$ with $k'<k<i$ and $\ell+1\le j$, i.e., we are led to a preceding valley of $P_I$ in $D'$, which is a contradiction. A similar argument applies, following north steps leaving, to show that $P_I$ must end on a valley in $D'$ as well. Therefore, the result follows in this case.
    \bigskip

    \noindent
    \textbf{Case 2:} $(i,j)\in I$. In this case, we extend $P_I$ to include any adjacent unit $E$ steps of $L$ ending at the intersection point with $P_I$ as in Figure~\ref{fig:PIL} (a) and (d). Similarly, we extend $P_I$ to include any adjacent unit south steps of $U$ ending at the intersection point with $P_I$ as in Figure~\ref{fig:PIU} (a) and (c), now thought of as $N$ steps of $P_I$. Note that in this case, elements of $S_I^+$ are of the form $\mathcal{T}^-_{(i',j')}\in S_2$ and are in one-to-one correspondence with $(i',j')\in D$ for which $i'\ge i$, $j'\ge j$, and $(i',j')$ is bordered by a $E$ followed by a $N$ step of $P_I$, i.e., a peak of $P_I$. Similarly, elements of $S_I^-$ are of the form $-\mathcal{T}^+_{(i',j')}\in S_3$ and are in one-to-one correspondence with $(i',j')\in D$ for which $i'> i$, $j'> j$, and $(i',j')$ is bordered by a $N$ followed by a $E$ step of $P_I$, i.e., a valley of $P_I$. Arguing as in Case 1, $P_I$ alternates between peaks and valleys, starting and ending with a peak in the region of $D$ consisting of all $(i',j')$ with $i'\ge i$ and $j'\ge j$. Thus, $R^D_{(i,j)}(I)=1$, as claimed, and the result follows in this case.
\end{proof}

\begin{cor}\label{cor:half_rooks}
     Let $\mathcal{P}$ be a poset defined by a simply-connected diagram $D$ where for every cell $(i,j)$ in the diagram $D$, there are no outward corners to the South East of $(i,j)$. Then $\dim A_T(\mathcal{P})=\dim I_T(\mathcal{P})$
\end{cor}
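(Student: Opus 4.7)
The plan is to combine Lemma~\ref{lem:uppers} with a one-sided variant of the rook statistic of Lemma~\ref{lem:Rij1}. For each $(i,j) \in D$, I define the \emph{southeast half-rook}
$$R^{D,\mathrm{SE}}_{(i,j)} = \sum_{\substack{(i',j') \in D \\ i' \ge i,\ j' \ge j}} \mathcal{T}^{-}_{(i',j')} \;-\; \sum_{\substack{(i',j') \in D,\ i' > i,\ j' > j \\ (i'-1,j'),\,(i',j'-1) \in D}} \mathcal{T}^{+}_{(i',j')},$$
consisting of only the SE-portion of $R^{D}_{(i,j)}$. Let $\tilde{R}^{D,\mathrm{SE}}_{(i,j)}$ denote its $\equiv$-equivalent representative obtained by replacing each $\mathcal{T}^{+}_{q}$ with $\mathcal{T}^{-}_{q}$ (valid since $\mathcal{T}_q^+ - \mathcal{T}_q^- = \mathcal{T}_q$). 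By construction, $\tilde{R}^{D,\mathrm{SE}}_{(i,j)}$ is a linear combination of $\mathcal{T}^{-}_{q}$ with $q \ge (i,j)$, and the coefficient of $\mathcal{T}^{-}_{(i,j)}$ is $1 \ne 0$ because the subtracted sum requires $i' > i$.

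The central claim is that $R^{D,\mathrm{SE}}_{(i,j)} = \mathbbm{1}_{(i,j)}$ as functions on $\mathcal{J}(\mathcal{P})$. If $(i,j) \notin I$, then every $(i',j')$ weakly southeast of $(i,j)$ lies in $\mathcal{P} \setminus I$ by the ideal property, so the first sum vanishes; moreover, any cell $(i',j')$ contributing to the second sum has both predecessors $(i'-1,j')$ and $(i',j'-1)$ also weakly southeast of $(i,j)$ and therefore in $\mathcal{P} \setminus I$, so $(i',j') \notin \mathrm{Min}(\mathcal{P} \setminus I)$ and the second sum vanishes too. This matches $\mathbbm{1}_{(i,j)}(I) = 0$. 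If $(i,j) \in I$, I mimic Case~2 of the proof of Lemma~\ref{lem:Rij1} but restricted to $D_{\mathrm{SE}} := \{(i',j') \in D : i' \ge i,\ j' \ge j\}$: the only nonzero contributions come from peaks and valleys of the boundary path $P_I$ inside $D_{\mathrm{SE}}$, and these alternate. The hypothesis that $D$ has no NE outward corners strictly southeast of $(i,j)$ is precisely what forces the sub-path in $D_{\mathrm{SE}}$, after performing the same boundary extensions as in Case~2, to begin and end at peaks---yielding net value $1 = \mathbbm{1}_{(i,j)}(I)$.

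Once this identity is established, passing to the $\equiv$-equivalent representative gives
$$\mathbbm{1}_{(i,j)} \equiv \tilde{R}^{D,\mathrm{SE}}_{(i,j)} = \sum_{q \ge (i,j)} c_{q,(i,j)}\, \mathcal{T}^{-}_{q},$$
with $c_{(i,j),(i,j)} = 1$. Since this holds for every $(i,j) \in \mathcal{P}$, the hypothesis of Lemma~\ref{lem:uppers} is satisfied and the conclusion $\dim A_T(\mathcal{P}) = \dim I_T(\mathcal{P})$ follows immediately.

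The main obstacle is the localized path-alternation argument: one must carefully verify that the Case~2 extensions of $P_I$, once truncated to the portions of $L$ and $U$ lying inside $D_{\mathrm{SE}}$, still force peaks at the two endpoints of the restricted sub-path. This is precisely the role of the weakened hypothesis, since a NE outward corner strictly southeast of $(i,j)$ would be exactly the configuration allowing a boundary-induced valley at one end, breaking the cancellation between the two sums.
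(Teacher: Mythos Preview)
Your proposal is correct and follows essentially the same approach as the paper: define the southeast half-rook, show it equals $\mathbbm{1}_{(i,j)}$ by invoking the Case~2 path argument of Lemma~\ref{lem:Rij1} (which only uses the absence of outward corners to the southeast), observe that its $\equiv$-representative is upper-triangular with nonzero diagonal, and apply Lemma~\ref{lem:uppers}. Your direct handling of the case $(i,j)\notin I$ is in fact slightly cleaner than the paper's reference back to Case~1, but otherwise the arguments coincide.
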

\begin{proof}
Note that in the proof of Lemma~\ref{lem:Rij1}, we showed that $$\mathbbm{1}_{(i,j)}=\sum_{i'\ge i,~j'\ge j} \mathcal{T}_{(i',j')}^--\sum_{\substack{(i'-1,j'),(i',j'-1) \in D \\ i'> i,~j'>j}}\mathcal{T}_{(i',j')}^+.$$ To see this, letting $$g=\sum_{i'\ge i,~j'\ge j} \mathcal{T}_{(i',j')}^--\sum_{\substack{(i'-1,j'),(i',j'-1) \in D \\ i'> i,~j'>j}}\mathcal{T}_{(i',j')}^+,$$ note that in Case 1 in the proof of Lemma~\ref{lem:Rij1} we showed that $g(I)=0$ if $(i,j)\notin I$, while in Case 2 we showed that $g(I)=1$ if $(i,j)\in I$. In fact, the argument given applies more generally to $\mathbbm{1}_{(i,j)}$ for elements $(i,j)$ in posets $\mathcal{P}$ defined by simply-connected diagrams $D$ with no outward corners to the South East of $(i,j)$. Now, setting $$S_1=\{(k,\ell)\in D~|~k>i,~\ell>j,~\text{and}~(k-1,\ell),(k,\ell-1) \in D\}$$ and $$S_2=\{(k,\ell)\in D~|~k\ge i,~\ell\ge j,~\text{and}~(k,\ell)\notin S_1\},$$ note that 
\begin{align*}
    g&=\sum_{(i',j')\in S_2}\mathcal{T}_{(i',j')}^--\sum_{(i',j')\in S_1}(\mathcal{T}_{(i',j')}^+-\mathcal{T}_{(i',j')}^-)=\sum_{(i',j')\in S_2}\mathcal{T}_{(i',j')}^--\sum_{(i',j')\in S_1}\mathcal{T}_{(i',j')}\\
    &\equiv \sum_{(i',j')\in S_2}\mathcal{T}_{(i',j')}^-= \sum_{i'\ge i,~j'\ge j} c_{(i',j'),(i,j)} \mathcal{T}_{(i',j')}^{-}
\end{align*}
with $c_{(i,j),(i,j)}=1\neq 0$. Thus, we can apply Lemma~\ref{lem:uppers} and the result follows.
\end{proof}

As noted above, Theorem~\ref{thm:main2} follows immediately from Corollary~\ref{cor:half_rooks}. Consequently, we have completed the proof of Theorem~\ref{thm:main}. Now, we move to Theorem~\ref{thm:se_boundary_boxes} and the proof of Theorem~\ref{thm:dim}.

\begin{thm}\label{thm:se_boundary_boxes}
    For $\mathcal{P}$ a poset defined by a simply connected diagram $D$ with no outward corners, let $B$ be the set of boxes corresponding to the maximal chain of $\mathcal{P}$ obtained by always going south if possible and east otherwise. Then the set $\{\tilde{R}^D_{(i,j)} : (i,j)\in B\}$ is linearly independent.
\end{thm}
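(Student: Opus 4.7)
The plan is to reduce linear independence of $\{\tilde{R}^D_{(i,j)} : (i,j) \in B\}$ to invertibility of an explicit square matrix, and then to verify nonsingularity via a combinatorial analysis of its entries.

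First, I would evaluate each $\tilde{R}^D_{b_t}$ on the principal order ideals $I(b_s) = \{p \in \mathcal{P} : p \preceq b_s\}$ for $b_s \in B$. Since $\mathrm{Max}(I(b_s)) = \{b_s\}$ and $\tilde{R}^D_{b_t} \in \mathrm{span}_\mathbb{R}(\mathcal{T}^-_p : p \in \mathcal{P})$, one has
\[ \tilde{R}^D_{b_t}(I(b_s)) = M_{t,s}, \]
where $M_{t,s}$ is the coefficient of $\mathcal{T}^-_{b_s}$ in the expansion of $\tilde{R}^D_{b_t}$. Hence the linear independence of $\{\tilde{R}^D_{b_t}\}_{b_t \in B}$ is equivalent to the invertibility of the $|B| \times |B|$ matrix $M = [M_{t,s}]$.

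Next, I would compute the entries of $M$ explicitly. Substituting $\mathcal{T}^+_p = \mathcal{T}^-_p + \mathcal{T}_p$ into the definition of $R^D_{b_t}$ and reducing modulo $\mathrm{span}(\mathcal{T}_p)$, each of the four summands contributes a $\pm 1$ to the coefficient of $\mathcal{T}^-_{b_s}$. A short case analysis gives $M_{t,t} = 2$ and, for $s \ne t$, $M_{t,s} \in \{0, 1\}$; the entry is $0$ precisely when $b_s$ and $b_t$ are strictly comparable in both coordinates and the relevant ``corner cells'' adjacent to $b_s$ lie in $D$. Note that $M$ is not necessarily symmetric, since the ``strict NW'' and ``strict SE'' corner conditions involve different cells.

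Finally, I would show $\det M \ne 0$ using the structure of the chain $B$. Decompose $B$ into its maximal column runs $B = B_1 \sqcup \cdots \sqcup B_m$. Within each $B_k$, the restricted submatrix equals $I_{|B_k|} + J_{|B_k|}$ (with $2$'s on the diagonal and $1$'s elsewhere), which has determinant $|B_k| + 1 \neq 0$. Between distinct runs, the ``no outward corners'' assumption forces a controlled $0/1$ pattern concentrated along the specific ``transition'' rows determined by the east steps of the chain. My plan is to argue inductively on $m$, peeling off the final block $B_m$ via cofactor expansion and reducing to a smaller instance of the same claim applied to the chain $B \setminus B_m$ in the induced sub-diagram.

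The main obstacle I anticipate is the asymmetry of $M$ combined with the subtle inter-block structure: tracking precisely when each corner condition holds, and showing that the cross-run entries never conspire to produce a kernel vector, will require careful use of the no-outward-corners hypothesis. An alternative strategy that may prove cleaner is to select a different collection of $|B|$ order ideals on which to evaluate---for instance, ideals generated by suitable multi-element antichains arising along the chain $B$---whose evaluation matrix admits a direct triangularization, bypassing the need to analyze $M$ entry-by-entry.
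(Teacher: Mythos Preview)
Your route differs substantially from the paper's. The paper never forms an evaluation matrix; it inducts on $|B|$ by deleting the last column of $D$ and exploits the $1$-mesic property of each $\tilde{R}^D_{(i,j)}$ (Lemma~\ref{lem:Rij1}) via a short global count: on the antichain-rowmotion orbit through $\emptyset$ every element of $\mathcal{P}$ appears exactly once, so for any $1$-mesic $\sum d_{k,\ell}\mathcal{T}^-_{(k,\ell)}$ one has $\sum d_{k,\ell}=\rk(\mathcal{P})+2$. Writing a hypothetical dependence $\tilde{R}^D_{(m,n)}=\sum_{(i,j)\in B'} c_{i,j}\tilde{R}^D_{(i,j)}$ and comparing this invariant in $D$ versus $D'$ (using that the $(m,n)$-coefficient in $\tilde{R}^D_{(m,n)}$ is $2$ and all its coefficients are nonnegative) yields an immediate numerical contradiction, with no entrywise matrix work.

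Your setup is correct as far as it goes: $\tilde{R}^D_{b_t}(I(b_s))$ is indeed the coefficient of $\mathcal{T}^-_{b_s}$, the diagonal entries of $M$ are $2$, the off-diagonals lie in $\{0,1\}$, and each column-run diagonal block is $I+J$. Two corrections are in order. First, invertibility of $M$ \emph{implies} linear independence of the $\tilde{R}^D_{b_t}$, but the converse can fail in general (restriction to the $B$-coordinates could have a kernel), so ``equivalent'' is too strong; only the direction you need holds. Second, the cross-block pattern is denser than ``concentrated along transition rows'': for any $b_s\in B$ whose south and west neighbours both lie outside $D$, \emph{every} off-diagonal entry in column $s$ equals $1$.

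The genuine gap is the step you yourself flag. Even granting (as is in fact true) that the principal submatrix $M|_{B'\times B'}$ coincides with the analogous matrix $M'$ for the sub-diagram $D'$, knowing $\det M'\neq 0$ does not by itself give $\det M\neq 0$: you must still control the Schur complement $M'-X\,M_m^{-1}Y$, or equivalently carry out the cofactor expansion across the last block and show the cross-block contributions do not annihilate the main term. You have not done this, and the dense off-block structure makes it nontrivial. Until that computation is completed---or you implement your alternative of a test-ideal family that triangularizes---what you have is a plan rather than a proof.
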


\newcommand{\tmd}[2]{
\node at (#1*1+.75,#2*1-.75) {2};
}
\begin{figure}[ht]
 \begin{center}
    \begin{tikzpicture}[scale = .65]

         \draw[step=1.0,black,thin] (0,0) grid (5,3);
         \draw[-, blue, ultra thick] (0,3) -- (0,2)--(1,2)--(1,1)--(2,1)--(2,0)--(5,0)--(5,2)--(4,2)--(4,3)--(0,3);
        
        \tmd{0}{3}
         \tm{1}{3}
         \tm{2}{3}
         \tm{3}{3}
         \tm{4}{2}
        \tm{1}{2}
         \tm{2}{1}

        \begin{scope}[shift = {(6,0)}]

         \draw[step=1.0,black,thin] (0,0) grid (5,3);
         \draw[-, blue, ultra thick] (0,3) -- (0,2)--(1,2)--(1,1)--(2,1)--(2,0)--(5,0)--(5,2)--(4,2)--(4,3)--(0,3);
          \tm{0}{3}
         \tmd{1}{3}
         \tm{2}{3}
         \tm{3}{3}
         \tm{4}{2}
        \tm{1}{2}
         \tm{2}{1}
        \end{scope}
        \begin{scope}[shift = {(-3,-4)}]

         \draw[step=1.0,black,thin] (0,0) grid (5,3);
         \draw[-, blue, ultra thick] (0,3) -- (0,2)--(1,2)--(1,1)--(2,1)--(2,0)--(5,0)--(5,2)--(4,2)--(4,3)--(0,3);
         \tmd{1}{2}
        \tm{0}{3}
         \tm{1}{3}
         \tm{2}{2}
         \tm{3}{2}
         \tm{4}{2}
         \tm{2}{1}
        \end{scope}
         \begin{scope}[shift = {(3,-4)}]

         \draw[step=1.0,black,thin] (0,0) grid (5,3);
         \draw[-, blue, ultra thick] (0,3) -- (0,2)--(1,2)--(1,1)--(2,1)--(2,0)--(5,0)--(5,2)--(4,2)--(4,3)--(0,3);
         \tm{2}{1}
         \tmd{2}{2}
         \tm{3}{2}
         \tm{4}{2}
        \tm{1}{2}
         \tm{0}{3}
         \tm{2}{3}
        \end{scope}
         \begin{scope}[shift = {(9,-4)}]

         \draw[step=1.0,black,thin] (0,0) grid (5,3);
         \draw[-, blue, ultra thick] (0,3) -- (0,2)--(1,2)--(1,1)--(2,1)--(2,0)--(5,0)--(5,2)--(4,2)--(4,3)--(0,3);
         \tmd{2}{1}
         \tm{2}{2}
         \tm{3}{1}
         \tm{4}{1}
        \tm{1}{2}
         \tm{0}{3}
         \tm{2}{3}
        \end{scope}
        \begin{scope}[shift = {(0,-8)}]

         \draw[step=1.0,black,thin] (0,0) grid (5,3);
         \draw[-, blue, ultra thick] (0,3) -- (0,2)--(1,2)--(1,1)--(2,1)--(2,0)--(5,0)--(5,2)--(4,2)--(4,3)--(0,3);
         \tmd{3}{1}
         \tm{3}{2}
         \tm{3}{3}
         \tm{4}{1}
        \tm{1}{2}
         \tm{0}{3}
         \tm{2}{1}
        \end{scope}
        \begin{scope}[shift = {(6,-8)}]

         \draw[step=1.0,black,thin] (0,0) grid (5,3);
         \draw[-, blue, ultra thick] (0,3) -- (0,2)--(1,2)--(1,1)--(2,1)--(2,0)--(5,0)--(5,2)--(4,2)--(4,3)--(0,3);
         \tmd{4}{1}
         \tm{4}{2}
         \tm{3}{3}
         \tm{3}{1}
        \tm{1}{2}
         \tm{0}{3}
         \tm{2}{1}
        \end{scope}
\end{tikzpicture}
\end{center}
    \caption{An example of the basis of reduced rooks in Theorem~\ref{thm:se_boundary_boxes}}
    \label{fig:se_box_example}
\end{figure}
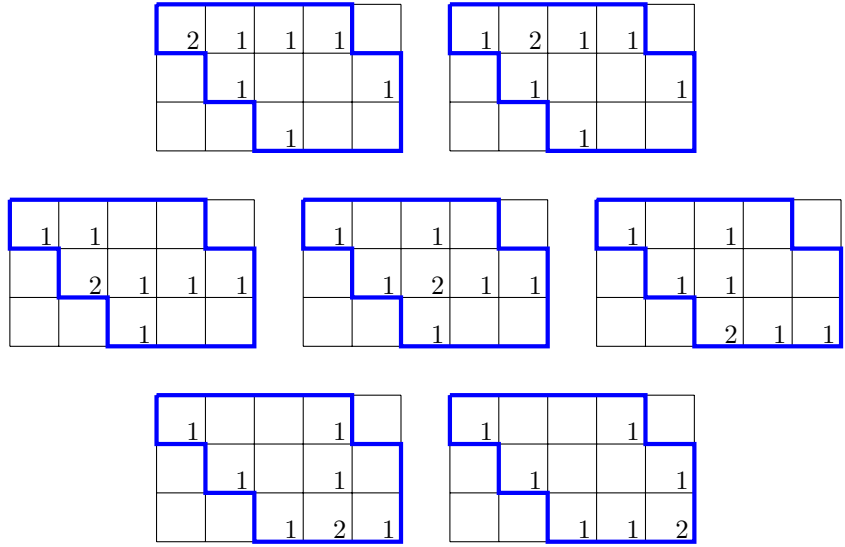

\begin{proof}
    We induct on $|B|$. If $|B| =1$, then $D$ is a single cell and the claim holds trivially. Now suppose that $D$ is a diagram with no outward corners contained minimally in an $m\times n$ rectangle, so that $\rk(\mathcal{P})+1=m+n-1$. Moreover, suppose that for any poset $\mathcal{P}'$ defined by a simply connected diagram $D'$ with no outward corners such that $\rk(\mathcal{P}')+1 < m+n-1$ the claim holds. We may assume that the maximal element of $\mathcal{P}$ is not the only element of the final row of $D$ because we can always pass to the isomorphic poset obtained by swapping the coordinates of our cells in $D$. 

    Let $D'$ be the subdiagram of $D$ obtained by deleting the last column of $D$, $\mathcal{P}'$ be the poset defined by $D'$, and $B'$ be the corresponding southeasternmost maximal chain of $D'$. Note that $\rk(\mathcal{P}')+1=m+n-2$ by construction. By induction, the set $C = \{\tilde{R}^{D'}_{(i,j)}: (i,j)\in B'\}$ considered as reduced rooks in $D'$ are linearly independent. When instead considered as reduced rooks in $D$, the set of reduced rooks $C$ is linearly independent as well, since any such linear combination which will produce 0 in $D$ will also produce 0 in $D'$. Now, suppose for a contradiction that the set $C' = C \cup \{\tilde{R}^D_{(m,n)}\}=\{\tilde{R}^D_{(i,j)} : (i,j)\in B\}$ is not linearly independent as reduced rooks of $D$. Consequently $\tilde{R}^D_{(m,n)} =\sum_{(i,j)\in B'} c_{i,j} \tilde{R}^D_{(i,j)}$. We note that since $\tilde{R}^D_{(i,j)}$ is 1-mesic for all $(i,j)\in D$ by Lemma~\ref{lem:Rij1}, it follows that $\sum_{(i,j)\in B'} c_{i,j}=1$.
    
   For an arbitrary graded poset $\mathcal{Q}$, suppose that $f = \sum_{p\in \mathcal{Q}} f_p\Tout{p}$. If $\mathcal{O}$ is the antichain-rowmotion orbit of $\mathcal{Q}$ containing the empty antichain, then $\sum_{A\in \mathcal{O}} f(A) = \sum_{p\in \mathcal{Q}} f_{p}$ as every element of $\mathcal{Q}$ appears as an element of an antichain in $\mathcal{O}$ exactly once. In the special case where $f$ is a 1-mesic function, $\sum_{A\in \mathcal{O}}f(A) = \rk(\mathcal{Q})+2$. Thus, since $\tilde{R}^D_{(m,n)}$ is 1-mesic, if $\tilde{R}^D_{(m,n)} = \sum_{(i,j)\in D} d_{i,j} \Tout{(i,j)}$, we have that $\sum_{(i,j)\in D} d_{i,j} = \rk(\mathcal{P})+2= m+n$. Additionally, we know by the definition of $\tilde{R}^D_{(m,n)}$ that $d_{i,j} \ge 0$ for all $(i,j)$. Now consider $g=\sum_{(i,j)\in B'} c_{i,j} \tilde{R}^{D'}_{(i,j)}$ as an element of $\text{span}(\Tout{(i,j)}~|~(i,j)\in D')$. It is immediate that $g = \sum_{(i,j)\in D'} d_{i,j} \Tout{(i,j)}$. Since $g$ is 1-mesic, because $\sum_{(i,j)\in B'} c_{i,j}=1$ and each $\tilde{R}^{D'}_{(i,j)}$ is 1-mesic for $(i,j)\in B'$, we have $\sum_{(i,j)\in D'} d_{i,j} = \rk(\mathcal{P}')+2= m+n-1$. But since $d_{m,n} =2$ by the definition of $\tilde{R}^D_{(m,n)}$, we must have that $\sum_{(i,j)\in D} d_{i,j} \ge m+n+1 > m+n$, contradicting the fact that $\tilde{R}_{(m,n)}$ is 1-mesic. Therefore, the set of reduced rooks $C'$ is linearly independent.
\end{proof}

We can now prove Theorem~\ref{thm:dim}.

\begin{proof}[Proof of Theorem \ref{thm:dim}]
By Corollary~\ref{cor:half_rooks}, we have $\dim A_T(\mathcal{P}) = \dim I_T(\mathcal{P}) $. As a consequence of Theorem~\ref{thm:diamond}, we know that $\dim I_T(\mathcal{P})\le m+n-1=\mathrm{rk}(\mathcal{P})+1$ where $D$ is minimally contained in an $m\times n$ rectangle. To see this, mark the leftmost entry in each row and the topmost entry in each column of $D$. Note that there are $m+n-1$ such cells. Then, considering our restrictions on $D$, for an unmarked cell $(i,j)$, it must be the case that $(i-1,j),(i,j-1),(i-1,j-1)\in D$; that is, Theorem~\ref{thm:diamond} can be applied to $(i,j)$ along with these elements. Consequently, elements of $I_T(\mathcal{P})$ can be defined by their coefficients at the marked elements, i.e., $\dim I_T(\mathcal{P})\le m+n-1$. Additionally, considering Lemma~\ref{lem:Rij1}, we have that $\tilde{R}^D_{(i,j)}\in A_T(\mathcal{P})$ for $(i,j)\in D$ so that, applying Theorem~\ref{thm:se_boundary_boxes}, it follows that $m+n-1\le \dim A_T(\mathcal{P})$. The result follows.
\end{proof}
\bibliographystyle{amsplain}
\bibliography{arxivbib}
\label{sec:biblio}
\end{document}